\patchcmd{\section}{\normalfont}{\normalfont\Large}{}{}
\patchcmd{\section}{\scshape}{\bfseries}{}{}
\renewcommand{\@secnumfont}{\bfseries}
\let\originalforall=\forall
\renewcommand{\forall}{\mathop{\vcenter{\hbox{\Large$\originalforall$}}}}
\let\originalexists=\exists
\renewcommand{\exists}{\mathop{\vcenter{\hbox{\Large$\originalexists$}}}}
\newcommand{\free}[2]{#1 \hspace{1.5 pt} \raisebox{1pt}{\scaleobj{0.7}{\circ}}\hspace{1.5 pt} #2}
\def\subsection{\@startsection{subsection}{3}%
  \z@{.7\linespacing\@plus.7\linespacing}{.7\linespacing}%
  {\normalfont\large\bfseries}}
\newtheorem{thm}{Theorem}[section]
\newtheorem{cor}[thm]{Corollary}
\newtheorem{lem}[thm]{Lemma}
\newtheorem{prop}[thm]{Proposition}
\newtheorem{f}[thm]{Fact}
\newtheorem{de}[thm]{Definiton}
\newtheorem{rem}[thm]{Remark}
\title{Spectral theory of Fourier--Stieltjes algebras}
\author{Przemys\l aw Ohrysko}
\address{Institute of Mathematics of the Polish Academy of Sciences, ul. \'{S}niadeckich 8, 00-656 Warsaw, Poland}
\email{pohrysko@impan.pl}
\thanks{The research of the first author has been supported by National Science Centre, Poland grant no. 2014/15/N/ST1/02124. The second author was partially supported by the National Science Centre (NCN) grant no.~2016/21/N/ST1/02499.}
\author{Mateusz Wasilewski}
\address{Institute of Mathematics of the
Polish Academy of Sciences, ul. \'{S}niadeckich 8, 00-656 Warsaw, Poland}
\email{mwasilewski@impan.pl}
\begin{document}
\baselineskip=21pt
\begin{abstract}
In this paper we start studying spectral properties of the Fourier--Stieltjes algebras, largely following Zafran's work on the algebra of measures on a locally compact group. We show that for a large class of discrete groups the Wiener--Pitt phenomenon occurs, i.e. the spectrum of an element of the Fourier--Stieltjes algebra is not captured by its range. We also investigate the notions of absolute continuity and mutual singularity in this setting; non-commutativity forces upon us two distinct versions of support of an element, indicating a crucial difference between this setup and the realm of Abelian groups. In spite of these difficulties, we also show that one can introduce and use generalised characters to prove a criterion on belonging of a multiplicative-linear functional to the Shilov boundary of the Fourier--Stieltjes algebra.
\end{abstract}

\subjclass[2010]{Primary 43A30; Secondary 43A35.}

\keywords{Natural spectrum, Wiener--Pitt phenomenon, Fourier--Stieltjes algebras.}

\maketitle

\raggedbottom
\makeatletter
\let\origsection\section
\renewcommand\section{\@ifstar{\starsection}{\nostarsection}}

\newcommand\nostarsection[1]
{\sectionprelude\origsection{#1}\sectionpostlude}

\newcommand\starsection[1]
{\sectionprelude\origsection*{#1}\sectionpostlude}

\newcommand\sectionprelude{%
  \vspace{1.5em}
}

\newcommand\sectionpostlude{%
  \vspace{1.5em}
}
\makeatother
\section{Introduction}

In this article we extend the study of spectral properties of the convolution algebra of measures on a locally compact Abelian group to the non-commutative setting. Before describing our results, let us present a brief exposition of this classical subject, which is the main motivation for our work.
\subsection{Overview of the subject}
It is clear that for a measure $\mu$ on a locally compact group $G$ the (closure of the) range of the Fourier transform of $\mu$ is contained in its spectrum. However, as it was observed by N. Wiener and H. R. Pitt \cite{wp} this inclusion may be proper and in recognition of the authors this strange spectral behaviour was called the \textbf{Wiener--Pitt phenomenon}. The first rigorous proof of the existence of the Wiener--Pitt phenomenon was given by Y. Schreider \cite{schreider} and simplified later by Williamson \cite{wil}. An alternative approach (using Riesz products) was introduced by C.~C. Graham \cite{graham} and nowadays we know many examples of measures with a non-natural spectrum (larger than the closure of the set of values of the Fourier--Stieltjes transform) -- consult for example Chapters 5--7 in \cite{grmc}. The set of all measures with a natural spectrum does not admit any convenient algebraic structure, in particular it is not closed under addition (that was first proved by M. Zafran in \cite{Zafran} for $I$-groups, the general case was settled by O. Hatori and E. Sato in \cite{h} and \cite{hs}, for more information on the set of measures with a natural spectrum consult \cite{ow2}). Nonetheless, thanks to the pioneering work of M. Zafran \cite{Zafran} we know that the situation changes drastically when we restrict our consideration to measures with Fourier--Stieltjes transforms vanishing at infinity; in fact, measures with a natural spectrum from this class form a closed ideal. There are many other results connected to this topic but let us limit the discussion to the remarkable paper of F. Parreau \cite{p} containing the construction of a measure with a real but non-natural spectrum (the solution of the so-called Katznelson's problem) and to probably the most up-to-date monograph in this area by B. Host, J.-F. M\'{e}la, F. Parreau \cite{hmp}. Note also that similar problems are considered in a more abstract setting of convolution measure algebras in the book of J. L. Taylor \cite{t} and the quantitative version of the Wiener--Pitt phenomenon is discussed in the paper of N. Nikolski \cite{nik}.

The amount of complicated problems connected with the spectral properties of measures suggests that the structure of the Gelfand space $\triangle(\mathrm{M}(G))$ (for a locally compact Abelian group $G$) is very sophisticated. Indeed, the Wiener--Pitt phenomenon itself implies that the dual group $\widehat{G}$ (identified with the evaluations of the Fourier--Stieltjes transform) is not dense in $\triangle(\mathrm{M}(G))$ but the general question about the separability of $\triangle(\mathrm{M}(G))$ was answered negatively only very recently (see \cite{owg}). For completeness, let us recall that there is a description of $\triangle(\mathrm{M}(G))$ in terms of \textbf{generalised characters} introduced by Y. Schreider in \cite{schreider} and extensively used by various authors but not effective enough to provide a convenient way to calculate spectra of measures.

Since $\mathrm{B}(G)$ is a natural generalisation of the algebra of measures to the non-commutative setting, it is legitimate and tempting to investigate problems analogous to the ones discussed above. But in contrast with the Abelian case, there are only a few papers in the literature dealing with them -- the work of M. E. Walter \cite{wa} in the spirit of Taylor's book \cite{t} and some general remarks given by E. Kaniuth, A. T. Lau and A. \"{U}lger in \cite{klu}. The aim of this paper is to present an account, as complete as possible, of the possibility of extending the Abelian theory to the context of non-commutative groups.

We would like to stress that the intended audience of this paper is the community of classical harmonic analysts. Therefore we decided that, whenever feasible, we will give many details in the operator algebraic parts of the paper, and we often reprove known results, for the reader's convenience (mostly in Subsection \ref{mssandas}).

Let us finish this subsection with the most important definition appearing in this paper.
\begin{de}
We say that $f\in \mathrm{B}(G)$ has a \textbf{natural spectrum} if $\sigma(f)=\overline{f(G)}$. The set of all such elements will be denoted by $\mathrm{NS}(G)$.
\end{de}

\subsection{Contents page}
In this part we provide a sketch of the content of subsequent sections for the convenience of the reader.

Section \ref{basic results} contains the basic results on elements with a natural spectrum (in particular, the inclusions $\mathrm{A}(G), \mathrm{B}(G)\cap \mathrm{AP}(G)\subset \mathrm{NS}(G)$ are proved in Fact \ref{zawpod}). Additionally, the procedure of extending a positive definite function from a subgroup is discussed in the context of spectral properties (Proposition \ref{prz}), which enables us to show that the Wiener-Pitt phenomenon occurs for an arbitrary discrete group containing an infinite Abelian subgroup (Theorem \ref{wpf}) and also in Theorem \ref{nieos} the existence of an uncountable family of pairwise disjoint open sets in $\triangle(\mathrm{B}_{0}(G))$ is justified basing on the results from \cite{owg}.

Section \ref{decomposition} deals with the topic of the `algebraic size' and algebraic properties of the set of all elements with a natural spectrum. The extension of the theorem of Hatori and Sato \cite{hs} on the decomposition of an arbitrary measure on a compact Abelian group into a sum of two measures with a natural spectrum and a discrete measure is proved for maximally almost periodic (discrete) groups (Theorem \ref{rozklad}). The section is concluded with some remarks on the set of spectrally reasonable elements (perturbing each member of $\mathrm{NS}(G)$ to a member of $\mathrm{NS}(G)$), which was the main object under study (for $G=\mathbb{Z}$) in \cite{ow2}.

The purpose of Section \ref{Section Zafran} is to transfer the results from the very important Zafran's paper \cite{Zafran} on the set of measures in $\mathrm{M}_{0}$ with a natural spectrum to the present setting. The main Theorem \ref{glz} says that $\mathrm{B}_{0}(G)\cap \mathrm{NS}(G)$ (where $\mathrm{B}_{0}(G):= \mathrm{B}(G)\cap c_{0}(G)$) is a closed ideal in $\mathrm{B}(G)$ and the Gelfand transform of each $f\in \mathrm{B}_{0}(G)\cap \mathrm{NS}(G)$ vanishes off $G$. In order to go further, we discuss in detail in Subsection \ref{mssandas} the notions of absolute continuity and mutual singularity of elements of $\mathrm{B}(G)$ in comparison to the classical (for $G$ -- Abelian) situation (in Proposition \ref{rowno}) and the equivalent conditions for a subspace of $\mathrm{B}(G)$ to be orthogonally complemented (Proposition \ref{Prop:Lspaces}). After this preparation we provide a criterion for the non-naturality of the spectrum of an element of $\mathrm{B}_{0}(G)$ (Theorem \ref{nz}) and a more general result on singularity with respect to $\mathrm{B}_{0}(G)\cap \mathrm{NS}(G)$ in Proposition \ref{singzaf}. Along the way we prove a nice formula for the support of the product of two positive elements of $\mathrm{B}(G)$ (Proposition \ref{wzornos}).

Section \ref{approach1} is devoted to establishing the non-commutative analogues of generalised characters. As it was mentioned before, the description of $\triangle(\mathrm{B}(G))$ in these terms is not sufficient to obtain an algorithm for calculating spectra, but is adequate for giving a sufficient condition for a member of $\triangle(\mathrm{B}(G))$ to belong to the Shilov boundary of $B(G)$ (Theorem \ref{brzsz}).

In Section \ref{free groups} we discuss some examples of positive definite functions on free groups, paying special attention to their spectral properties. We start with the famous \textbf{Haagerup function} and then we pass to more sophisticated construction of \textbf{free Riesz products}.

The paper is summarised with a list of remarks and open problems.
\subsection{Definitions and Notation}
As we are going to use a lot of Banach algebra theory (check \cite{kan} or \cite{z} for a complete exposition of this topic), let us recall two important notions associated with an element $x$ of a Banach algebra $A$ -- the spectrum and the spectral radius:
\begin{align*}
\sigma(x)&=\{\lambda\in\mathbb{C}:x-\lambda e \text{ is not invertible}\} \text{ where $e$ is a unit element of $A$}, \\
r(x)&=\sup\{|\lambda|:\lambda\in\sigma(x)\}=\lim_{n\rightarrow\infty}\|x^{n}\|^{\frac{1}{n}}\leq \|x\|.
\end{align*}
In case of a non-unital Banach algebra $A$ we consider the spectrum of $x\in A$ in the unitisation of $A$. The irreplaceable tool in the investigations of spectral properties of elements in a commutative Banach algebra is the set of multiplicative-linear functionals, which we denote by $\triangle(A)$. This set equipped with the weak$^{\ast}$  topology becomes a locally compact Hausdorff space (compact when $A$ is unital) called the Gelfand space of the (commutative) Banach algebra $A$. We define the Gelfand transform of an element $x\in A$ as a continuous function $\widehat{x}:\triangle(A)\rightarrow\mathbb{C}$ via the formula:
\begin{equation*}
\widehat{x}(\varphi):=\varphi(x)\text{ for $\varphi\in\triangle(A)$}.
\end{equation*}
For a commutative unital Banach algebra $A$ the spectrum of an element $x\in A$ coincides with the image of its Gelfand transform. There is a distinguished compact subset of $\triangle(A)$, called the Shilov boundary of $A$ and denoted $\partial(A)$, with the property that every Gelfand transform attains the maximum of its modulus on $\partial(A)$ and being a minimal closed subset of $\triangle(A)$ with this property.

We will discuss now the main Banach algebras under study in the whole paper. Let $G$ be a discrete group and let $\mathrm{B}(G)$ denote the Fourier--Stieltjes algebra of the group $G$, i.e. the linear span of all positive definite functions on $G$ or equivalently the set of all coefficients of unitary representations -- functions of the form $G\ni x\mapsto \langle \xi,\pi(x)\eta\rangle$, where $\pi:G\rightarrow \mathrm{ B}(\mathsf{H}_{\pi})$ is a continuous unitary representation on a Hilbert space $\mathsf{H}_{\pi}$ and $\xi,\eta\in \mathsf{H}_{\pi}$ (for basic facts on $\mathrm{B}(G)$ and related matters consult \cite{ey}). Recall that $\mathrm{B}(G)$ is the dual space of the (full) group $C^{\ast}$-algebra (abbreviated $C^{\ast}(G)$) with the pairing defined as
\begin{equation*}
\langle f,x\rangle:=\sum_{g\in G}f(g)x(g)\text{ for $f\in \mathrm{B}(G)$ and $x\in \mathbb{C}[G]$}.
\end{equation*}
Moreover, $\mathrm{B}(G)$ is a commutative and semisimple unital Banach algebra when considered with the pointwise multiplication. Note that in the classical case ($G$ -- Abelian) we have a natural identification of $\mathrm{B}(G)$ with $\mathrm{M}(\widehat{G})$, given by Bochner's theorem, where $\widehat{G}$ is the dual group of $G$ and $\mathrm{M}(\widehat{G})$ is the algebra of complex Borel measures on $\widehat{G}$.
It is not surprising (as $\mathrm{B}(G)$ is the dual of $C^{\ast}(G)$) that we will also use C$^{\ast}$-algebra theory and the most important features for us will be reminded when they become relevant (see subsection \ref{mssandas}).

There is an important closed ideal in $\mathrm{B}(G)$ called the Fourier algebra of $G$ and denoted $\mathrm{A}(G)$. It is defined as the set of all coefficients of the left regular representation or equivalently as $L^{2}(G)\ast L^{2}(G)$ and serves as a substitute of the classical $L^{1}$ since for an Abelian group $G$ we have a canonical identification $\mathrm{A}(G)\simeq L^{1}(\widehat{G})$. We will also make use of \textbf{almost periodic functions} defined as bounded functions on $G$ such that the set of all their translates is precompact in the uniform topology. It turns out that the class of almost periodic functions is an algebra denoted $\mathrm{AP}(G)$ and moreover $\mathrm{B}(G)\cap \mathrm{AP}(G)$ is a closed subalgebra of $\mathrm{B}(G)$ with the Gelfand space identified with $bG$ -- the Bohr compactification of $G$. The algebra $\mathrm{B}(G)\cap \mathrm{AP}(G)$ is an analogue of the algebra of discrete measures (purely atomic).

Let us move on now to the commutative case (the standard reference for this part is \cite{r}). For a locally compact Abelian group $G$ we denote by $\mathrm{M}(G)$ the Banach algebra of complex Borel measures on $G$ and identify $L^{1}(G)$ with absolutely continuous measures via Radon--Nikodym theorem. $\mathrm{M}_{d}(G)$ will stand for the subalgebra of $\mathrm{M}(G)$ consisting of all discrete measures and $\mathrm{M}_{c}(G)$ will stand for the closed ideal of all continuous measures (vanishing on points). We define the Fourier--Stieltjes transform of a measure $\mu\in \mathrm{M}(G)$ by the formula
\begin{equation*}
\widehat{\mu}(\gamma)=\int_{G}\gamma(-x)d\mu(x)\text{, $\gamma\in\widehat{G}$}.
\end{equation*}
The closed ideal of measures with Fourier--Stieltjes transforms vanishing at infinity will be denoted by $\mathrm{M}_{0}(G)$. It is well-known that $L^{1}(G)\subset \mathrm{M}_{0}(G)\subset \mathrm{M}_{c}(G)$.
\section{Basic results}\label{basic results}
In this section we collect the basic results concerning spectra of elements of $\mathrm{B}(G)$ and properties of its Gelfand space in case when $G$ is a discrete group containing an infinite Abelian subgroup. We start with some examples of elements in $\mathrm{B}(G)$ with a natural spectrum (for a more complete result in this direction consult Remark \ref{spine} at the end of this section).
\begin{f}\label{zawpod}
Let $G$ be a locally compact group. Then $\mathrm{A}(G)\subset \mathrm{NS}(G)$ and $\mathrm{B}(G)\cap \mathrm{AP}(G)\subset \mathrm{NS}(G)$. Moreover,
\begin{equation*}
r(\mathrm{A}(G)):=\{f\in \mathrm{B}(G):\exists_{n\in\mathbb{N}}f^{n}\in \mathrm{A}(G)\}\subset \mathrm{NS}(G).
\end{equation*}
\end{f}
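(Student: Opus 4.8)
The plan is to handle the three assertions in order of increasing generality, since the last one subsumes the first. First I would dispose of $\mathrm{A}(G) \subset \mathrm{NS}(G)$. Here the key point is that $\mathrm{A}(G)$ is a closed ideal in $\mathrm{B}(G)$ and its Gelfand space is naturally identified with $G$ itself (every multiplicative-linear functional on $\mathrm{A}(G)$ is evaluation at a point of $G$, since $\mathrm{A}(G)$ is regular and Tauberian, or in the Abelian case simply because $\mathrm{A}(G) \simeq L^1(\widehat G)$). Because $\mathrm{A}(G)$ is an ideal in the unital algebra $\mathrm{B}(G)$, for $f \in \mathrm{A}(G)$ one has $\sigma_{\mathrm{B}(G)}(f) = \sigma_{\mathrm{A}(G)}(f) \cup \{0\}$ (adjoining $0$ for the non-unital ideal), and $\sigma_{\mathrm{A}(G)}(f) \cup \{0\} = \overline{f(G)} \cup \{0\}$. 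Since $f$ vanishes at infinity on $G$ (elements of $\mathrm{A}(G)$ lie in $c_0(G)$), $0 \in \overline{f(G)}$ already unless $G$ is finite, and in the finite case everything is trivial; hence $\sigma(f) = \overline{f(G)}$.

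Next I would treat $\mathrm{B}(G) \cap \mathrm{AP}(G)$. The decisive fact, recalled in the excerpt, is that $\mathrm{B}(G) \cap \mathrm{AP}(G)$ is a closed subalgebra of $\mathrm{B}(G)$ whose Gelfand space is the Bohr compactification $bG$. One must be slightly careful: naturality of the spectrum is a statement about $\sigma_{\mathrm{B}(G)}(f)$, not a priori about the spectrum computed in the subalgebra. However, $\mathrm{B}(G) \cap \mathrm{AP}(G)$ contains the unit of $\mathrm{B}(G)$, and I would argue it is inverse-closed: if $f \in \mathrm{B}(G) \cap \mathrm{AP}(G)$ is invertible in $\mathrm{B}(G)$, then $1/f$ is again almost periodic (the translates of $1/f$ are controlled by those of $f$ together with a lower bound $|f| \geq \delta > 0$ coming from invertibility and the fact that $\overline{f(G)}$ is the spectrum in the subalgebra), and it is in $\mathrm{B}(G)$ by assumption. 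Therefore $\sigma_{\mathrm{B}(G)}(f) = \sigma_{\mathrm{B}(G)\cap\mathrm{AP}(G)}(f) = \widehat f(bG) = \overline{f(G)}$, the last equality because $G$ is dense in $bG$ and $\widehat f$ is continuous. This gives $\mathrm{B}(G) \cap \mathrm{AP}(G) \subset \mathrm{NS}(G)$.

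Finally, for $r(\mathrm{A}(G))$, suppose $f \in \mathrm{B}(G)$ with $f^n \in \mathrm{A}(G)$ for some $n$. By the spectral mapping theorem, $\sigma(f)^n = \sigma(f^n) = \overline{f^n(G)} = \overline{f(G)^n}$, using the first part. I must now deduce $\sigma(f) = \overline{f(G)}$ from $\sigma(f)^n = \overline{f(G)}^{\,n}$ together with the automatic inclusion $\overline{f(G)} \subseteq \sigma(f)$. The argument: take $\lambda \in \sigma(f)$; then $\lambda^n \in \overline{f(G)}^{\,n}$, so there is a sequence $x_k \in G$ with $f(x_k)^n \to \lambda^n$; passing to a subsequence, $f(x_k) \to \mu$ for some $\mu \in \overline{f(G)}$ (here $f$ is bounded, as $f \in \mathrm{B}(G) \subset \ell^\infty(G)$) with $\mu^n = \lambda^n$, i.e. $\lambda = \zeta \mu$ for an $n$-th root of unity $\zeta$. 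This does not immediately pin down $\lambda = \mu$; the gap is exactly the possibility that $\sigma(f)$ contains rotated copies of points of $\overline{f(G)}$.

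The hard part will be closing this last gap, and I expect to handle it by a connectedness/holomorphic-functional-calculus argument rather than a bare algebraic one. One clean route: $\mathrm{A}(G)$ is an ideal, so $f^n \in \mathrm{A}(G)$ forces the Gelfand transform of $f$ to vanish on $\triangle(\mathrm{B}(G)) \setminus G$ wherever $\widehat{f^n}$ does, i.e. off the closure of $G$; more precisely, for $\varphi \in \triangle(\mathrm{B}(G))$ not lying in (the closure of) $G$, one has $\widehat{f^n}(\varphi) = 0$ because $f^n \in \mathrm{A}(G)$ and $\mathrm{A}(G)$-characters are points of $G$ — so $\widehat f(\varphi)^n = 0$, hence $\widehat f(\varphi) = 0 \in \overline{f(G)}$. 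For $\varphi$ in the closure of $G$ inside $\triangle(\mathrm{B}(G))$, continuity of $\widehat f$ and density give $\widehat f(\varphi) \in \overline{f(G)}$ directly. Since $\sigma(f) = \widehat f(\triangle(\mathrm{B}(G)))$, this yields $\sigma(f) \subseteq \overline{f(G)}$ and we are done. The main subtlety to verify carefully is the identification of $\mathrm{A}(G)$-characters with points of $G$ in the possibly non-Abelian discrete setting, and the compatibility of the hull-kernel relation between $\triangle(\mathrm{B}(G))$ and $\triangle(\mathrm{A}(G))$ for the ideal $\mathrm{A}(G)$; both are standard but worth stating explicitly.
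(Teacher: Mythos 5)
Your proposal is correct, and for the first and third inclusions it ends up at exactly the paper's argument: the hull--kernel partition $\triangle(\mathrm{B}(G))=\triangle(\mathrm{A}(G))\cup h(\mathrm{A}(G))$ with $\triangle(\mathrm{A}(G))=G$, and for $f^{n}\in\mathrm{A}(G)$ the observation that $\varphi(f)^{n}=\varphi(f^{n})=0$ forces $\varphi(f)=0$ on $h(\mathrm{A}(G))$. (You were right to distrust your first attempt via $\sigma(f)^{n}=\sigma(f^{n})$; the $n$-th-root-of-unity ambiguity is a real gap, and the hull--kernel argument you substitute for it is the correct fix and is what the paper does.)

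The one place you diverge is $\mathrm{B}(G)\cap\mathrm{AP}(G)$, where your worry points in the wrong direction and leads you to do unnecessary work. Passing from a unital Banach algebra to a closed unital subalgebra can only \emph{enlarge} the spectrum, so $\sigma_{\mathrm{B}(G)}(f)\subseteq\sigma_{\mathrm{B}(G)\cap\mathrm{AP}(G)}(f)=\widehat{f}(bG)=\overline{f(G)}$; combined with the automatic inclusion $\overline{f(G)}\subseteq\sigma_{\mathrm{B}(G)}(f)$ (evaluations at points of $G$ are characters of $\mathrm{B}(G)$), the two spectra are sandwiched and equal. This is the paper's two-line argument. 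Your inverse-closedness argument (pointwise reciprocals of invertible elements are bounded below, hence almost periodic, hence in the subalgebra) is valid and proves the stronger statement that $\mathrm{B}(G)\cap\mathrm{AP}(G)$ is a full subalgebra of $\mathrm{B}(G)$, but it is not needed here. One small loose end common to both your write-up and the paper: in the $r(\mathrm{A}(G))$ case one should note that $0\in\overline{f(G)}$, which follows because $f^{n}\in\mathrm{A}(G)\subset C_{0}(G)$ forces $f$ itself to vanish at infinity (the compact case being trivial since then $\mathrm{A}(G)=\mathrm{B}(G)$ and $h(\mathrm{A}(G))=\emptyset$).
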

\begin{proof}
The first assertion follows from the fact that $\mathrm{A}(G)$ is an ideal in $\mathrm{B}(G)$. Indeed, $\triangle(\mathrm{A}(G))=G$ and $\triangle(\mathrm{B}(G))=\triangle(\mathrm{A}(G))\cup h(\mathrm{A}(G))$, where $h(\mathrm{A}(G))=\{\varphi\in\triangle(\mathrm{B}(G)):\varphi|_{\mathrm{A}(G)}=0\}$ (for a proof check Lemma 2.2.15 in \cite{kan}). So, for $f\in \mathrm{A}(G)$ we have $\overline{f(G)}=f(G)\cup\{0\}=\sigma_{\mathrm{A}(G)}(f)=\sigma_{\mathrm{B}(G)}(f)$ (the spectrum in $\mathrm{A}(G)$ is calculated in its unitisation). If there exists $n\in\mathbb{N}$ such that $f^{n}\in \mathrm{A}(G)$ then for each $\varphi\in h(\mathrm{A}(G))$ we have $0=\varphi(f^{n})=(\varphi(f))^{n}$ which implies $\varphi(f)=0$. Since the spectrum of an element is the image of its Gelfand transform, we obtain $\sigma_{\mathrm{B}(G)}(f)=\overline{f(G)}=f(G)\cup\{0\}$.

For $\mathrm{B}(G)\cap \mathrm{AP}(G)$ we argue as follows: $\triangle(\mathrm{B}(G)\cap \mathrm{AP}(G))=bG$ ($bG$ is the Bohr compactification of $G$). This implies $\sigma_{\mathrm{B}(G)\cap \mathrm{AP}(G)}(f)=\overline{f(G)}$. But the spectrum in a closed subalgebra cannot decrease and clearly $\overline{f(G)}\subset \sigma_{\mathrm{B}(G)}(f)$ which finishes the proof.
\end{proof}
It is well-known that if $H$ is a subgroup of a discrete group $G$ then every element of $\mathrm{B}(H)$ extends to an element of $\mathrm{B}(G)$ by putting the value zero outside of $H$ (that was first proved on the real line in \cite{he}, for the general case check 32.43 in \cite{hr}). For $f\in \mathrm{B}(H)$ let us denote this extension by $\widetilde{f}\in \mathrm{B}(G)$. On the other hand, $\mathrm{B}(H)$ can be regarded as a closed ideal of $\mathrm{B}(G)$ (consisting of all elements of $\mathrm{B}(G)$ vanishing out of $H$) and when we write $\sigma_{\mathrm{B}(H)}(g)$ for some element of $\mathrm{B}(G)$ supported on $H$ then we mean the spectrum of $g$ in the unitisation of $\mathrm{B}(H)$ treated as a an ideal of $\mathrm{B}(G)$. The next proposition will be crucial in order to extend results from the commutative setting.
\begin{prop}\label{prz}
Let $H$ be a subgroup of the discrete group $G$. Then, for every $f\in \mathrm{B}(H)$ we have $\sigma_{\mathrm{B}(H)}(f)\cup\{0\}=\sigma_{\mathrm{B}(G)}(\widetilde{f})=\sigma_{\mathrm{B}(H)}(\widetilde{f})$.
\end{prop}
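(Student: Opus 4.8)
The plan is to play off three a priori different spectra against each other: the spectrum of $f$ computed in (the unitisation of) $\mathrm{B}(H)$, the spectrum of the zero-extension $\widetilde f$ in $\mathrm{B}(G)$, and the spectrum of $\widetilde f$ computed in $\mathrm{B}(H)$ regarded as the closed ideal of $\mathrm{B}(G)$ of functions vanishing off $H$. The key structural fact I want to use is exactly the one already exploited in the proof of Fact \ref{zawpod}: if $J$ is a closed ideal in a commutative unital Banach algebra $A$, then $\triangle(A) = \triangle(J) \sqcup h(J)$, where $h(J) = \{\varphi \in \triangle(A) : \varphi|_J = 0\}$, and a character of $J$ extends uniquely to a character of $A$ not killing $J$. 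Applying this with $A = \mathrm{B}(G)$ and $J = \mathrm{B}(H)$ gives $\triangle(\mathrm{B}(G)) = \triangle(\mathrm{B}(H)) \sqcup h(\mathrm{B}(H))$, and every $\varphi \in h(\mathrm{B}(H))$ satisfies $\varphi(\widetilde f) = 0$ since $\widetilde f \in \mathrm{B}(H)$.

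First I would settle $\sigma_{\mathrm{B}(G)}(\widetilde f) = \sigma_{\mathrm{B}(H)}(\widetilde f) \cup \{0\}$. Since the spectrum in a commutative unital Banach algebra is the range of the Gelfand transform over all of $\triangle$, and $\triangle(\mathrm{B}(G))$ decomposes as above, we get $\sigma_{\mathrm{B}(G)}(\widetilde f) = \widehat{\widetilde f}(\triangle(\mathrm{B}(H))) \cup \widehat{\widetilde f}(h(\mathrm{B}(H)))$. The first piece is precisely the spectrum of $\widetilde f$ in the unitisation of the ideal $\mathrm{B}(H)$, i.e. $\sigma_{\mathrm{B}(H)}(\widetilde f)$; the second piece is $\{0\}$ whenever $h(\mathrm{B}(H)) \neq \varnothing$, and it is empty exactly when $\mathrm{B}(H) = \mathrm{B}(G)$, i.e. when $H = G$ — in which case $\widetilde f = f$ and $0 \in \sigma(f)$ anyway if $G$ is infinite, but to be safe one simply notes that $0$ always lies in the spectrum of a non-unit of a non-unital ideal, so appending $\{0\}$ is harmless. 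This yields $\sigma_{\mathrm{B}(G)}(\widetilde f) = \sigma_{\mathrm{B}(H)}(\widetilde f) \cup \{0\}$.

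Next I would match $\sigma_{\mathrm{B}(H)}(\widetilde f)$ with $\sigma_{\mathrm{B}(H)}(f) \cup \{0\}$. Here the point is that $\mathrm{B}(H)$ sits inside $\mathrm{B}(G)$ in two guises — as an abstract Banach algebra (the Fourier--Stieltjes algebra of $H$, carrying $f$) and as the ideal of $\widetilde f$-type functions — and the restriction map $\mathrm{B}(G) \supset \mathrm{B}(H)\text{-ideal} \to \mathrm{B}(H)$, $g \mapsto g|_H$, is an isometric algebra isomorphism inverse to $g \mapsto \widetilde g$ (this is the content of the extension theorem cited before the proposition, together with the fact that restriction of positive definite functions to a subgroup is again positive definite and does not increase the norm, while the norm also cannot go up under zero-extension). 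An isometric isomorphism of Banach algebras induces a homeomorphism of Gelfand spaces and hence preserves spectra, so $\sigma_{\mathrm{B}(H)}(\widetilde f) = \sigma_{\mathrm{B}(H)}(f)$ up to the bookkeeping about unitisations; once one writes everything in the unitisation the two sides agree after adjoining $0$, giving $\sigma_{\mathrm{B}(H)}(\widetilde f) = \sigma_{\mathrm{B}(H)}(f) \cup \{0\}$. Combining the two displayed identities finishes the proof.

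The main obstacle I anticipate is purely one of careful bookkeeping rather than of substance: one must be scrupulous about \emph{where} each spectrum is computed — in $\mathrm{B}(H)$ as a stand-alone (necessarily non-unital, hence unitised) algebra, in $\mathrm{B}(G)$ which is already unital, or in the ideal copy of $\mathrm{B}(H)$ inside $\mathrm{B}(G)$ (again unitised) — and to track the extra $\{0\}$ that the passage between unital and non-unital settings repeatedly introduces. The reason all three spectra collapse to the same set modulo $0$ is that the relevant characters are the same: $\triangle(\mathrm{B}(H)) \cong G \setminus$-translates... more precisely $\triangle$ of the ideal $\mathrm{B}(H)$ equals $\{\varphi \in \triangle(\mathrm{B}(G)) : \varphi|_{\mathrm{B}(H)} \neq 0\}$, which is canonically $\triangle(\mathrm{B}(H))$ in its own right, and on all of these the value $\widehat{\widetilde f}(\varphi) = \widehat{f}(\varphi)$ by definition of the zero-extension. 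So the verification reduces to the ideal-decomposition lemma plus the isometric nature of the extension map, both already available.
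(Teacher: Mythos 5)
Your argument is correct, and for the central equality it takes a genuinely different route from the paper. Where you pass through Gelfand theory — the decomposition $\triangle(\mathrm{B}(G))=\triangle(\mathrm{B}(H))\sqcup h(\mathrm{B}(H))$ for the closed ideal, the identification of the ideal of functions vanishing off $H$ with the abstract algebra $\mathrm{B}(H)$ via the mutually inverse extension/restriction maps, and the standard fact that unitising an already unital algebra only adjoins $0$ to each spectrum — the paper instead verifies invertibility by hand: given $g\in\mathrm{B}(H)$ with $(f-\lambda\mathbf{1}_{H})g=\mathbf{1}_{H}$, it checks directly that $(\widetilde{f}-\lambda\mathbf{1}_{G})(\widetilde{g}-\frac{1}{\lambda}\mathbf{1}_{G\setminus H})=\mathbf{1}_{G}$, and conversely that restricting an inverse in $\mathrm{B}(G)$ to $H$ gives an inverse in $\mathrm{B}(H)$. (The second equality, $\sigma_{\mathrm{B}(G)}(\widetilde f)=\sigma_{\mathrm{B}(H)}(\widetilde f)$, is in both treatments just the ideal fact from Lemma 2.2.15 of \cite{kan}, already invoked in Fact \ref{zawpod}.) The explicit formula is more elementary and makes the role of $\mathbf{1}_{G\setminus H}=\mathbf{1}_{G}-\widetilde{\mathbf{1}_{H}}\in\mathrm{B}(G)$ transparent; your version buys a cleaner conceptual picture of why all three spectra coincide up to $0$ and localises all the unitisation bookkeeping in one standard lemma. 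One small caveat: your disposal of the degenerate case $H=G$ wobbles — the claims that $0\in\sigma(f)$ for infinite $G$ and that $0$ lies in the spectrum of any non-unit of the ideal both fail for, say, $f=\mathbf{1}_{G}$ — but the paper's own ``clearly $0\in\sigma_{\mathrm{B}(G)}(\widetilde f)$'' has the same blind spot, and the statement is only applied to proper subgroups, where $\widetilde f$ vanishes somewhere and $0$ is genuinely in its range, hence in its spectrum.
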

\begin{proof}
The last equality holds true because $\mathrm{B}(H)$ is an ideal in $\mathrm{B}(G)$ (see the explanation in the proof of the last Fact). Clearly $0\in\sigma_{\mathrm{B}(G)}(\widetilde{f})$ so let us take $0\neq\lambda\notin\sigma_{\mathrm{B}(H)}(f)$. Then there exists $g\in \mathrm{B}(H)$ such that $(f-\lambda\mathbf{1}_{H})g=\mathbf{1}_{H}$. It is easy to check that $(\widetilde{f}-\lambda\mathbf{1}_{G})(\widetilde{g}-\frac{1}{\lambda}\mathbf{1}_{G\setminus H})=\mathbf{1}_{G}$ which means that $\lambda\notin\sigma_{\mathrm{B}(G)}(\widetilde{f})$. The reverse inclusion is even simpler: the restriction to $H$ of the inverse in $\mathrm{B}(G)$ is an inverse in $\mathrm{B}(H)$.
\end{proof}
We are ready now to prove that the Wiener--Pitt phenomenon occurs for all discrete groups containing infinite Abelian subgroups.
\begin{thm}\label{wpf}
Let $G$ be a discrete group containing an infinite Abelian subgroup. Then there exists $f\in \mathrm{B}_{0}(G)$ with a non-natural spectrum. In particular, $G$ is not dense in $\triangle(\mathrm{B}_{0}(G))$.
\end{thm}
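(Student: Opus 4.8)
The plan is to reduce the general case to the classical Wiener--Pitt phenomenon on the infinite Abelian subgroup $H\le G$ by means of the extension Proposition \ref{prz}. Since $H$ is an infinite discrete Abelian group, its dual $\widehat H$ is a compact (infinite) Abelian group, and Bochner's theorem identifies $\mathrm{B}(H)$ with $\mathrm{M}(\widehat H)$, with $\mathrm{B}_0(H)=\mathrm{B}(H)\cap c_0(H)$ corresponding to $\mathrm{M}_0(\widehat H)$. The classical results recalled in the introduction (Schreider, Williamson, or the Riesz product construction of Graham) furnish a measure $\mu\in\mathrm{M}_0(\widehat H)$ whose spectrum in $\mathrm{M}(\widehat H)$ is not natural; equivalently there is $f\in\mathrm{B}_0(H)$ with $\sigma_{\mathrm{B}(H)}(f)\ne\overline{f(H)}$. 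Concretely one can take $f$ to be (a suitable normalisation of) the Fourier--Stieltjes transform of a Riesz product on a Cantor-type subgroup of $\widehat H$, so that $\|f\|_\infty<1$ but $1\in\sigma_{\mathrm{B}(H)}(f)$; such an $f$ automatically lies in $c_0(H)$.

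Next I would transport this $f$ to $G$ via the extension map, setting $\widetilde f\in\mathrm{B}(G)$ equal to $f$ on $H$ and $0$ off $H$; this is legitimate since $H$ is a subgroup of the discrete group $G$ (the cited fact 32.43 in \cite{hr}). First one checks $\widetilde f\in\mathrm{B}_0(G)$: indeed $\widetilde f$ vanishes off $H$ and equals $f\in c_0(H)$ on $H$, hence $\widetilde f\in c_0(G)$, and it is in $\mathrm{B}(G)$ by construction. Now Proposition \ref{prz} gives
\[
\sigma_{\mathrm{B}(G)}(\widetilde f)=\sigma_{\mathrm{B}(H)}(f)\cup\{0\}.
\]
On the other hand $\widetilde f(G)=f(H)\cup\{0\}$, so $\overline{\widetilde f(G)}=\overline{f(H)}\cup\{0\}$. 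If we have arranged that $0\in\overline{f(H)}$ (which is automatic, as $f\in c_0(H)$ and $H$ is infinite), then $\overline{\widetilde f(G)}=\overline{f(H)}$, while $\sigma_{\mathrm{B}(G)}(\widetilde f)=\sigma_{\mathrm{B}(H)}(f)\cup\{0\}\supsetneq\overline{f(H)}$ because already $\sigma_{\mathrm{B}(H)}(f)\ne\overline{f(H)}$ and $\sigma_{\mathrm{B}(H)}(f)\supset\overline{f(H)}$. Hence $\sigma_{\mathrm{B}(G)}(\widetilde f)\ne\overline{\widetilde f(G)}$, i.e. $\widetilde f$ has non-natural spectrum.

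For the final clause, suppose $G$ were dense in $\triangle(\mathrm{B}_0(G))$. The Gelfand transform of $\widetilde f$ is a continuous function on $\triangle(\mathrm{B}_0(G))$ whose values on the (dense) subset $G$ are exactly $\widetilde f(G)$, so by continuity its range would be contained in $\overline{\widetilde f(G)}$; but the range of the Gelfand transform of $\widetilde f$ over all of $\triangle$ (together with $0$, accounting for the non-unitality) is precisely $\sigma_{\mathrm{B}(G)}(\widetilde f)$, which strictly contains $\overline{\widetilde f(G)}$ — a contradiction. I expect the only genuine subtlety to be bookkeeping of the ``$\cup\{0\}$'' terms coming from working in unitisations of non-unital ideals, and making sure the classical non-natural-spectrum example is chosen inside $c_0$; the density corollary is then a soft consequence. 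The main conceptual input is entirely classical (the existence of the Wiener--Pitt phenomenon on an infinite compact Abelian group), with Proposition \ref{prz} doing the work of pushing it up to $G$.
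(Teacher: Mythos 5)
Your proposal is correct and follows essentially the same route as the paper: transport a classical Wiener--Pitt example from $\mathrm{M}_0(\widehat H)\simeq \mathrm{B}_0(H)$ to $G$ via Proposition \ref{prz} and compare $\sigma_{\mathrm{B}(G)}(\widetilde f)=\sigma_{\mathrm{B}(H)}(f)\cup\{0\}$ with $\overline{\widetilde f(G)}=\overline{f(H)}\cup\{0\}$. Your additional bookkeeping of the $\cup\{0\}$ terms, the $c_0$ membership of the Riesz-product example, and the density argument for the final clause only makes explicit what the paper leaves implicit.
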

\begin{proof}
Let $H$ be an infinite Abelian subgroup of $G$. Then there exists $g\in \mathrm{B}(H)$ such that $\sigma_{\mathrm{B}(H)}(g)\neq\overline{g(H)}\cup\{0\}$ (take for $g$ a Fourier--Stieltjes transform of some special measure, for example a Riesz product). Now we have $\sigma_{\mathrm{B}(G)}(\widetilde{g})=\sigma_{\mathrm{B}(H)}(g)\cup\{0\}$ by the last proposition. But $\overline{\widehat{g}(G)}=\overline{g(H)}\cup\{0\}$, which completes the proof.
\end{proof}
We are able to prove much more basing on the results from \cite{owg}.
\begin{thm}\label{nieos}
Let $G$ be a discrete Abelian group with an infinite Abelian subgroup. Then $\triangle(\mathrm{B}_{0}(G))$ contains continuum many pairwise disjoint open sets. In particular, this space is not separable. The same conclusion holds true for $\triangle(\mathrm{B}(G))$.
\end{thm}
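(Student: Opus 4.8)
The plan is to transfer the statement, via Bochner's theorem and the behaviour of Gelfand spaces under passage to closed ideals, to the non-separability construction of \cite{owg}.

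I would first reduce the claim for $\triangle(\mathrm{B}(G))$ to the one for $\triangle(\mathrm{B}_0(G))$. Since $\mathrm{B}_0(G)=\mathrm{B}(G)\cap c_0(G)$ is a closed ideal of $\mathrm{B}(G)$ (the product of a bounded function with one in $c_0(G)$ again lies in $c_0(G)$), restriction of multiplicative functionals identifies $\triangle(\mathrm{B}_0(G))$ homeomorphically with the open set $\{\varphi\in\triangle(\mathrm{B}(G)):\varphi|_{\mathrm{B}_0(G)}\not\equiv 0\}$ of $\triangle(\mathrm{B}(G))$ -- this is the same mechanism used in the proof of Fact \ref{zawpod} (see \cite{kan}). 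Hence every family of pairwise disjoint nonempty open subsets of $\triangle(\mathrm{B}_0(G))$ is, verbatim, such a family in $\triangle(\mathrm{B}(G))$, and it is enough to treat $\triangle(\mathrm{B}_0(G))$.

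Next I would pass to a countable subgroup. Any infinite group contains a countably infinite subgroup (the one generated by a sequence of distinct elements), so fix a countably infinite abelian $H\le G$. As recalled before Proposition \ref{prz}, $\mathrm{B}(H)$ sits inside $\mathrm{B}(G)$ as the closed ideal of functions vanishing off $H$; therefore $\mathrm{B}_0(H):=\mathrm{B}(H)\cap c_0(G)$ (extension by zero) is a closed ideal of $\mathrm{B}_0(G)$, and by the same principle $\triangle(\mathrm{B}_0(H))$ is homeomorphic to an open subset of $\triangle(\mathrm{B}_0(G))$. Thus it suffices to produce continuum many pairwise disjoint nonempty open subsets of $\triangle(\mathrm{B}_0(H))$. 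Since $H$ is a countably infinite discrete abelian group, $K:=\widehat{H}$ is an infinite metrizable compact abelian group, and Bochner's theorem gives an isometric isomorphism of Banach algebras $\mathrm{B}(H)\cong\mathrm{M}(K)$ which carries $\mathrm{B}_0(H)$ onto $\mathrm{M}_0(K)$. The assertion therefore becomes: $\triangle(\mathrm{M}_0(K))$ contains continuum many pairwise disjoint nonempty open sets, which is exactly what the construction in \cite{owg} yields for an infinite metrizable compact abelian group. The last sentence of the statement is then automatic, since no separable space can contain uncountably many pairwise disjoint nonempty open sets.

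Apart from the invocation of \cite{owg}, all of this is soft (the ideal/open-subset identification, Bochner duality, the passage between $\mathrm{B}_0$ and $\mathrm{B}$). The one place needing care is making sure the statement extracted from \cite{owg} is the strong one -- a family of continuum many pairwise disjoint open sets contained in $\triangle(\mathrm{M}_0(K))$ itself, rather than merely inside $\triangle(\mathrm{M}(K))$ or merely the bare non-separability -- and that it is available for the $K$ at hand. If \cite{owg} is written only for a distinguished group such as $K=\mathbb{T}$, one handles the general case by choosing $H$ more carefully: a copy of $\mathbb{Z}$ whenever $G$ has an element of infinite order, and otherwise a countably infinite subgroup of a torsion abelian subgroup, whose dual is one of the standard infinite metrizable compact abelian groups to which the same argument applies. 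This matching with \cite{owg} is the only real obstacle.
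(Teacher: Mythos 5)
Your proposal is correct and follows essentially the same route as the paper: pass to an infinite Abelian subgroup $H$, invoke the family of measures (equivalently, Theorem 8) from \cite{owg} to get continuum many pairwise disjoint open sets in $\triangle(\mathrm{B}_0(H))$, and use the closed-ideal/open-subset identification to transport them to $\triangle(\mathrm{B}_0(G))$ and $\triangle(\mathrm{B}(G))$. Your extra care in passing to a \emph{countable} subgroup so that $\widehat{H}$ is metrizable, and in checking that the statement extracted from \cite{owg} is the strong one, is a reasonable tightening of details the paper leaves implicit.
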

\begin{proof}
The main idea behind the proof of this theorem in the Abelian case is to construct continuum many measures with Fourier--Stieltjes transforms vanishing at infinity such that each of these measures does not have a natural spectrum and the convolution of two distinct ones is absolutely continuous. So, let us take such a family of measures on $\widehat{H}$ where $H$ is an infinite Abelian subgroup of $G$ and let us denote their Fourier--Stieltjes transforms by $\{f_{\alpha}\}_{\alpha\in\mathbb{R}}\subset B_{0}(H)$; clearly, by Proposition \ref{prz}, the family of extensions $\{\widetilde{f_{\alpha}}\}_{\alpha\in \mathrm{B}_{0}(G)}$ has the same properties. Now we can repeat the argument given in the proof of Theorem 8 in \cite{owg} or argue as follows: we use directly the assertion of Theorem 8 in \cite{owg} to obtain the desired collection of open sets in $\triangle(\mathrm{B}_{0}(H))\subset\triangle(\mathrm{B}(H))$. But recalling that $\mathrm{B}(H)$ and $\mathrm{B}_{0}(H)$ are closed ideals in $\mathrm{B}(G)$ we are able to identify $\triangle(\mathrm{B}(H))$ and $\triangle(\mathrm{B}_{0}(H))$ with open subsets in $\triangle(\mathrm{B}(G))$ or $\triangle(\mathrm{B}_{0}(G))$ (respectively) and the proof is finished.
\end{proof}
\begin{rem}\label{spine}
It is possible to describe a much broader class of examples of elements with a natural spectrum in $B(G)$ than in Fact \ref{zawpod}, basing on the notion of a \textbf{spine} of a Fourier--Stieltjes algebra (abbreviated $A^{\ast}(G)$), which was introduced in \cite{is1} (see also \cite{is2}) and is a non-commutative generalisation of the subalgebra of a measure algebra spanned by maximal group subalgebras (algebras of the form $L^{1}(G_{\tau})$ where $\tau$ is a topology on $G$ finer then the original one and such that $G_{\tau}$ is a locally compact Abelian group). Since the referred definition is quite complicated we will not recall the whole construction, restricting the discussion to the properties which are essential for proving the inclusion $A^{\ast}(G)\subset \mathcal{NS}(G)$.

First of all, we obviously have $\sigma_{B(G)}\subset\sigma_{A^{\ast}(G)}(f)$ for every $f\in A^{\ast}(G)$ so it suffices to show that $\sigma_{A^{\ast}(G)}(f)=\overline{f(G)}$. Our strategy is to prove that $G\subset G^{\ast}:=\triangle(A^{\ast}(G))$ is a dense subset. It is plausible that one can achieve this goal by a careful study of the description of $G^{\ast}$ (see Theorem 4.1 in \cite{is1}) but we will present a more abstract approach. By the definition of $A^{\ast}(G)$ given on the second page of \cite{is1} (see also (3.2) and Theorem 3.5 therein) it is immediate that $A^{\ast}(G)$ is spanned by a certain family of subalgebras, each of them isomorphic to a Fourier algebra on some locally compact group (check Subsection 3.1 in the aforementioned paper). However, as it is well-known (see for example \cite{ey}), the Fourier algebras are commutative regular\footnote{A commutative Banach algebra $A$ is called \textbf{regular} if for every compact subset $E\subset \triangle(A)$ and $\varphi_{0}\notin E$ there exists $x\in A$ such that $\widehat{x}|_{E}=0$ but $\widehat{x}(\varphi_{0})\neq 0$.} Banach algebras, so $A^{\ast}(G)$ is a commutative Banach algebra spanned by regular subalgebras and by the theorem on the existence of the largest regular subalgebra (check Theorem 4.3.2 in \cite{kan}) $A^{\ast}(G)$ is regular. This immediately implies $\overline{G}=G^{\ast}$ (suppose that this is not the case and take $E=\overline{G}$ in the definition of regularity).

Note that the inclusion $A^{\ast}(G)\subset\mathcal{NS}(G)$ does not follow automatically from the fact that $A^{\ast}(G)$ is spanned by elements with a natural spectrum since the sum of two elements with a natural spectrum does not necessarily have a natural spectrum (this topic is elaborated on in the next section); even in the commutative setting we do not have an elementary argument at our disposal (excluding some simple cases like $G=\mathbb{T}$ or $G=\mathbb{R}$).
\end{rem}

\section{Decomposition of \texorpdfstring{$\mathrm{B}(G)$}{B(G)}}\label{decomposition}
In this section we will show that the set of all elements from $\mathrm{B}(G)$ with a natural spectrum is `big', at least in the algebraic sense. The main theorem is an extension of the result of O. Hatori and E. Sato (see \cite{hs} and \cite{o}, for the non-compact case consult \cite{h}) for compact Abelian groups, which originally states that every measure on a compact Abelian group is a sum of two measures of a natural spectrum and a discrete measure (also with a natural spectrum). The obstacle in the direct transfer of the classical proof lies in the fact that (contrary to the commutative case) not all discrete groups embed into their Bohr compactifications. Therefore we restrict ourselves to \textbf{maximally almost periodic groups} -- the class of groups for which the natural mapping into their Bohr compactification is injective. This class, characterised by the residual finiteness (for finitely generated groups) or by the possibility of embedding into a compact group, contains many interesting non-commutative examples such as free groups or, more generally, many hyperbolic groups (conjecturally, all of them). We conclude this section with some comments on \textbf{spectrally reasonable elements} -- the elements from $\mathrm{B}(G)$ which perturb any element with a natural spectrum to an element with a natural spectrum.
\subsection{Theorem of Hatori and Sato}
In the proof of the analogue of the theorem of Hatori and Sato we will repeatedly use the regularity of $A(G)$ (for a proof for compact $G$ see Lemma 2.9.5 in \cite{kan}, the general case is treated in \cite{ey}) and 'normality' of regular Banach algebras which is formalized in the following lemma (check Corollary 4.2.9 in \cite{kan}).
\begin{lem}\label{nor}
Every regular commutative Banach algebra $A$ is normal in the sense that whenever $E\subset\triangle(A)$ is closed, $K\subset\triangle(A)$ is compact and $E\cap K=\emptyset$, then there exists $x\in A$ such that $\mathrm{supp}\widehat{x}\subset\triangle(A)\setminus E$ and $\widehat{x}|_{K}=1$.
\end{lem}

\begin{thm}\label{rozklad}
Let $G$ be a discrete infinite maximally almost periodic group. Then $\mathrm{B}(G)=\mathrm{NS}(G)+\mathrm{NS}(G)+\mathrm{B}(G)\cap \mathrm{AP}(G)$.
\end{thm}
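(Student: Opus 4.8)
The plan is to imitate the classical Hatori--Sato argument, using the Bohr compactification $bG$ as a substitute for the dual group in the Abelian case. Since $G$ is maximally almost periodic, the canonical map $G\to bG$ is injective, so we may regard $G$ as a (dense) subgroup of the compact group $bG$. The key classical input is that every continuous measure $\mu$ on a compact Abelian group decomposes (after the Hatori--Sato theorem) as a sum of two measures with natural spectrum; and the relevant bridge here is that $\mathrm{B}(G)$ should be understood via its restriction behaviour from $bG$. Concretely, I would first record the identification $\mathrm{B}(G)\cap\mathrm{AP}(G)\simeq\mathrm{B}(bG)=\mathrm{M}(\widehat{bG})$ (Bochner), so that the ``discrete'' part in the decomposition is exactly the summand landing in $\mathrm{B}(G)\cap\mathrm{AP}(G)$.

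First I would reduce to a statement about a single $f\in\mathrm{B}(G)$. Given $f\in\mathrm{B}(G)$, I want to write $f=f_1+f_2+f_3$ with $f_1,f_2\in\mathrm{NS}(G)$ and $f_3\in\mathrm{B}(G)\cap\mathrm{AP}(G)$. The idea is to split off an almost periodic piece so that the remainder lies in $\mathrm{B}_0(G)=\mathrm{B}(G)\cap c_0(G)$, and then apply to the remainder the machinery of Section \ref{Section Zafran} (Theorem \ref{glz}), which tells us that $\mathrm{B}_0(G)\cap\mathrm{NS}(G)$ is a closed ideal; combined with a Zafran-type decomposition of $\mathrm{B}_0(G)$ this should give the splitting of the remainder into two natural-spectrum pieces. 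Alternatively, and more in the spirit of the original proof, I would transport the problem to $bG$: approximate $f$ in an appropriate sense by (extensions of) functions built from measures on $\widehat{bG}$, apply the Hatori--Sato theorem there to split the ``continuous part'' into two measures with natural spectrum, peel off the ``discrete part'' as the $\mathrm{AP}$ summand, and then pull everything back to $G$ using Fact \ref{zawpod} (which gives $\mathrm{B}(G)\cap\mathrm{AP}(G)\subset\mathrm{NS}(G)$, so the third summand is harmless spectrally) and Proposition \ref{prz} (to control how spectra behave under the relevant restriction/extension operations).

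The main technical steps, in order, are: (i) set up the correspondence between $\mathrm{B}(G)\cap\mathrm{AP}(G)$ and $\mathrm{M}(\widehat{bG})$ and verify that the natural projection $\mathrm{B}(G)\to\mathrm{B}(G)\cap\mathrm{AP}(G)$ (coming from the canonical conditional-expectation-type map onto the almost periodic part) is bounded and multiplicative enough for our purposes; (ii) show that what is left after subtracting the almost periodic part can be handled inside $\mathrm{B}_0(G)$, so that Theorem \ref{glz} applies; (iii) invoke the Abelian Hatori--Sato theorem on $bG$ (or on the relevant quotient/subgroup), being careful that the decomposition there is genuinely a decomposition into elements of $\mathrm{M}(\widehat{bG})$ that correspond under the above identification to elements of $\mathrm{B}(G)$; (iv) assemble the three pieces and check, using Fact \ref{zawpod}, that the first two have natural spectrum and the third is in $\mathrm{B}(G)\cap\mathrm{AP}(G)$.

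The hard part will be step (ii)–(iii): in the Abelian case one exploits the concrete measure-algebra structure $\mathrm{M}(\widehat G)=\mathrm{M}_d\oplus\mathrm{M}_c$ and the fact that the continuous part can be split with natural spectra; in the non-commutative setting there is no clean ``continuous vs.\ discrete'' decomposition of $\mathrm{B}(G)$, and bridging between $\mathrm{B}(G)$ and the honest measure algebra on $\widehat{bG}$ requires the maximal almost periodicity in an essential way. I expect the delicate point to be verifying that the almost-periodic projection of $f$ indeed lies in $\mathrm{B}(G)$ with the right norm control and that its complement sits in $\mathrm{B}_0(G)$ rather than merely in $\mathrm{B}(G)$; once that is established, the spectral bookkeeping is routine given Fact \ref{zawpod}, Proposition \ref{prz}, and Theorem \ref{glz}.
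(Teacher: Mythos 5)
Your proposed architecture does not match how the Hatori--Sato argument actually works, and two of its load-bearing steps fail.

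First, the reduction in your step (ii) is impossible: there is no decomposition $\mathrm{B}(G)=\mathrm{B}_0(G)+\bigl(\mathrm{B}(G)\cap\mathrm{AP}(G)\bigr)$, even for $G=\mathbb{Z}$ (there are continuous measures on $\mathbb{T}$ that are not a discrete measure plus an $\mathrm{M}_0$ measure). Moreover, Theorem \ref{glz} only says that $\mathrm{B}_0(G)\cap\mathrm{NS}(G)$ is a closed ideal; it provides no ``Zafran-type decomposition'' of $\mathrm{B}_0(G)$ into two natural-spectrum summands, so even if the remainder did land in $\mathrm{B}_0(G)$ you would have no tool to split it. Second, the identification $\mathrm{B}(G)\cap\mathrm{AP}(G)\simeq\mathrm{M}(\widehat{bG})$ via Bochner is false here: for a non-commutative maximally almost periodic group (e.g.\ a free group) $bG$ is a non-Abelian compact group, so there is no dual group $\widehat{bG}$ and the Abelian Hatori--Sato theorem cannot be invoked ``on $bG$.'' The paper instead uses Zelmanov's theorem to locate an infinite Abelian subgroup of $bG$, takes its closure $H$, and does all the harmonic analysis (Helson sets homeomorphic to the Cantor set, surjections onto $\overline{\mathbb{D}}$) inside the compact Abelian group $H$, then extends to $\mathrm{A}(bG)$ and restricts to $G$.

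The missing idea is the actual mechanism that produces natural spectra. One does \emph{not} isolate a spectrally nice part of $f$; rather, one cuts $f=f_0+f_1$ with $f_0=f\cdot h$ for a suitable $h$ coming from $\mathrm{A}(bG)$, and then \emph{adds} to each piece $f_i$ an almost periodic function $r_i g_i$ (with $g_i$ the restriction of an element of $\mathrm{A}(bG)$ supported disjointly from $f_i$, whose range is all of $\overline{\mathbb{D}}$, and $r_i$ the spectral radius of $f_i$). Because $f_i\cdot g_i=0$, one gets $\sigma(f_i+r_ig_i)\subset\sigma(f_i)\cup\sigma(r_ig_i)\subset r_i\overline{\mathbb{D}}$, while the closure of the range of $f_i+r_ig_i$ is exactly $r_i\overline{\mathbb{D}}$; the spectrum is natural because the range is forced to be as large as the spectrum could possibly be. The correction term $-r_0g_0-r_1g_1$ is the $\mathrm{B}(G)\cap\mathrm{AP}(G)$ summand. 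None of your steps (i)--(iv) contains this disc-covering device, and without it the spectral bookkeeping is not ``routine'' --- it is the whole proof.
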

\begin{proof}
We will denote by $b: G \hookrightarrow bG$ the embedding of $G$ into its Bohr compactification $bG$. By the theorem of Zelmanov (\cite{zel}) $bG$, being a compact group, contains an infinite Abelian subgroup, whose closure will be denoted by $H$; then $H$ is a compact commutative group. Let us take open sets $U_{0},U_{1}\subset H$ such that $\overline{U_{0}}\cap\overline{U_{1}}=\emptyset$.
By Theorems 41.5 and 41.13 in \cite{hr} there exist Helson sets\footnote{A compact subset $K \subset G$ of a locally compact group $G$ is called a \textbf{Helson set} if any continuous function on $K$ can be extended to an element of $\mathrm{A}(G)$.} $K_{0}\subset U_{0}$ and $K_{1}\subset U_{1}$ homeomorphic to the middle-third Cantor  set $C$. Using Alexandroff--Hausdorff theorem (cf. \cite[Theorem 3-28]{HY}), which states that any compact Hausdorff space is a continuous image of the Cantor set, we get surjective functions $p_i\colon K_{i} \to \overline{\mathbb{D}}$ for $i=1,2$. Recalling the definition of a Helson set and using Lemma \ref{nor} we obtain the existence of functions $s_{0},s_{1}\in \mathrm{A}(H)$ such that $s_{i}(K_{i})=\overline{\mathbb{D}}$ and $\mathrm{supp}(s_{i})\subset U_{i}$. We extend the functions $s_{0},s_{1}\in \mathrm{A}(H)$ to elements of $\widetilde{s_{0}}, \widetilde{s_{1}}\in A(bG)$ (the existence of such an extension is proved in Theorem 6.4 from \cite{dr}). As a compact Hausdorff space is automatically normal we can find two disjoint open sets $V_{0},V_{1}\subset bG$ such that $\overline{U_{0}}\subset V_{0}$ and $\overline{U_{1}}\subset V_{1}$. Applying Lemma \ref{nor} we are allowed to find $\widetilde{g_{0}},\widetilde{g_{1}}\in A(bG)$ satisfying $\widetilde{g_{i}}|_{\overline{U_{i}}}=\widetilde{s_{i}}|_{\overline{U_{i}}}=s_{i}|_{\overline{U_{i}}}$ and $\mathrm{supp}\widetilde{g_{i}}\subset V_{i}$. It follows that $\mathrm{supp}\widetilde{g_{0}}\cap \mathrm{supp}\widetilde{g_{1}}=\emptyset$ thus (by Lemma \ref{nor} again) there exists $\widetilde{h}\in A(bG)$ with
\begin{equation*}
\widetilde{h}(t)=\left\{  \begin{array}{c}
                0\text{ for } t\in\mathrm{supp}\widetilde{g_{0}} \\
                1\text{ for } t\in\mathrm{supp}\widetilde{g_{1}}\\
              \end{array}\right.
\end{equation*}

Let $f\in \mathrm{B}(G)$ and put $f_{0}:=f\cdot h$ and $f_{1}:= f- f_{0}$ where $h$ is the restriction of $\widetilde{h}$ to $G$. Denote by $r_{0}$, $r_{1}$ the spectral radii of $f_{0}$, $f_{1}$ (respectively). We also define $l_{0}:= f_{0}+r_{0}g_{0}$, $l_{1}:=f_{1}+r_{1}g_{1}$, and $l_{2}:=-r_{0}g_{0}-r_{1}g_{1}\in \mathrm{B}(G)\cap \mathrm{AP}(G)$. Then $f=l_{0}+l_{1}+l_{2}$ and it is enough to prove that $l_{0}\in \mathrm{NS}(G)$ (the statement about $l_{1}$ is justified in the same way).

By the definitions of $h$ and $g_{0}$ we have $h\cdot g_{0}=0$ which implies $f_{0}\cdot g_{0}=0$. This gives
\begin{equation}\label{rown}
l_{0}(G)\cup\{0\}=f_{0}(G)\cup r_{0}g_{0}(G).
\end{equation}
But $\overline{g_{0}(G)}=\widetilde{g_{0}}(bG)\supset\overline{\mathbb{D}}$, so by the definition of the spectral radius we have from $(\ref{rown})$,
\begin{equation}\label{zaw1}
\overline{l_{0}(G)}\subset\overline{r_{0}g_{0}(G)}.
\end{equation}
Now, if $g_{0}(x)\neq 0$ for some $x\in G$ then $f_{0}(x)=0$, so $l_{0}(x)=r_{0}g_{0}(x)$, which gives $r_{0}g_{0}(G)\setminus\{0\}\subset \overline{l_{0}(G)}$. Together with $(\ref{zaw1})$, taking into account the closedness of $\overline{l_{0}(G)}$, we obtain
\begin{equation}\label{toz1}
\overline{l_{0}(G)}=\overline{r_{0}g_{0}(G)}.
\end{equation}
Using again $f_{0}\cdot g_{0}=0$ we obtain $\sigma(l_{0})\subset \sigma(f_{0})\cup\sigma(r_{0}g_{0})$. The naturality of the spectrum of $g_{0}$, the fact $\overline{r_{0}g_{0}(G)}\supset r_{0}\overline{\mathbb{D}}$ and the definition of $r_{0}$ leads to
\begin{equation*}
\sigma(l_{0})\subset\sigma(r_{0}g_{0})=\overline{r_{0}g_{0}(G)}=\overline{l_{0}(G)}\text{ by $(\ref{toz1})$.}
\end{equation*}
That finishes the proof, since the inclusion $\overline{l_{0}(G)}\subset\sigma(l_{0})$ is trivial.
\end{proof}
In general we cannot rely on almost periodic functions since there may be very few of them (there are $G$ groups called \textbf{minimally almost periodic} such that $\mathrm{AP}(G)$ is trivial, an example of such a group is $\mathrm{SL}(2,\mathbb{R})$, check \cite{hk}). But we can prove a similar theorem replacing the summand $\mathrm{AP}(G)\cap \mathrm{NS}(G)$ by another $\mathrm{NS}(G)$. The proof is a straightforward adaption of the original idea from \cite{hs} and application of the extension procedure described in the previous section (note that as we extend our functions by zero, the algebraic properties, such as the product of two elements being equal to zero, are preserved).
\begin{thm}\label{dodd}
Let $G$ be a discrete group containing an infinite Abelian subgroup. Then $\mathrm{B}(G)=\mathrm{NS}(G)+\mathrm{NS}(G)+\mathrm{NS}(G)$.
\end{thm}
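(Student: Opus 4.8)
The plan is to run the proof of Theorem~\ref{rozklad} almost verbatim, with two modifications: the Bohr compactification $bG$, which is useless when $\mathrm{AP}(G)$ is small, is replaced by the Bohr compactification $bH$ of a fixed infinite Abelian subgroup $H\le G$, and the ingredients of the construction are transported back into $\mathrm{B}(G)$ via the extension-by-zero map $\mathrm{B}(H)\hookrightarrow\mathrm{B}(G)$, $g\mapsto\widetilde g$, which is an algebra homomorphism (so the relevant vanishing identities between products are preserved, exactly as noted just before the statement). Since $H$ is Abelian it is maximally almost periodic, so it embeds densely into the infinite compact \emph{Abelian} group $bH$; in particular no appeal to Zelmanov's theorem is needed. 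I would then copy the construction from the proof of Theorem~\ref{rozklad}: choose disjoint open $U_0,U_1\subset bH$ with $\overline{U_0}\cap\overline{U_1}=\emptyset$, use regularity of $\mathrm{A}(bH)$ to get $\widetilde h\in\mathrm{A}(bH)$ equal to $0$ on $\overline{U_0}$ and to $1$ on $\overline{U_1}$, and use Helson--Cantor sets $K_i\subset U_i$ together with a Cantor function to produce $s_0,s_1\in\mathrm{A}(bH)$ with $\mathrm{supp}(s_i)\subset U_i$ and $s_i(bH)\supset\overline{\mathbb D}$. Restricting $\widetilde h,s_0,s_1$ to the dense subgroup $H$ and extending by zero we obtain $h,g_0,g_1\in\mathrm{B}(G)$ supported on $H$; by Fact~\ref{zawpod} together with Proposition~\ref{prz} each $g_i$ lies in $\mathrm{NS}(G)$ and satisfies $\overline{g_i(G)}\supset\overline{\mathbb D}$, while the identities $hg_0=(1-h)g_1=g_0g_1=0$ (valid on $bH$, hence on $H$) survive the extension by zero.

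Now fix $f\in\mathrm{B}(G)$, put $f_0:=fh$, $f_1:=f-f_0$, $r_i:=r(f_i)$, and set
\begin{equation*}
l_0:=f_0+r_0g_0,\qquad l_1:=f_1+r_1g_1,\qquad l_2:=-r_0g_0-r_1g_1 ,
\end{equation*}
so that $f=l_0+l_1+l_2$. That $l_0\in\mathrm{NS}(G)$ (and symmetrically $l_1$) is proved exactly as in Theorem~\ref{rozklad}: since $f_0g_0=f(hg_0)=0$ we get $\sigma(l_0)\subset\sigma(f_0)\cup\sigma(r_0g_0)=\sigma(f_0)\cup r_0\overline{g_0(G)}$; as $|\sigma(f_0)|\le r_0$ and $\overline{\mathbb D}\subset\overline{g_0(G)}$, this forces $\sigma(l_0)\subset r_0\overline{g_0(G)}$, and since $l_0(G)\subset f_0(G)\cup r_0g_0(G)$ with $\overline{f_0(G)}\subset r_0\overline{\mathbb D}\subset\overline{r_0g_0(G)}$, also $\overline{l_0(G)}\subset\overline{r_0g_0(G)}$; on the other hand $l_0$ coincides with $r_0g_0$ at every point where $g_0\ne0$ and vanishes identically off $H$, so (as in Theorem~\ref{rozklad}, using the density of $H$ in $bH$) one gets $r_0\overline{g_0(G)}\subset\overline{l_0(G)}$, whence $\sigma(l_0)=\overline{l_0(G)}$.

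The only genuinely new step -- and the reason the almost-periodic summand of Theorem~\ref{rozklad} can here be dispensed with -- is $l_2$: since $g_0g_1=0$ we have $\sigma(l_2)\subset\sigma(-r_0g_0)\cup\sigma(-r_1g_1)=r_0\overline{g_0(G)}\cup r_1\overline{g_1(G)}$, and since at every point of $G$ at most one of $g_0,g_1$ is non-zero and both vanish off $H$, the range $l_2(G)$ equals $(-r_0g_0(G))\cup(-r_1g_1(G))\cup\{0\}$, whose closure is precisely $r_0\overline{g_0(G)}\cup r_1\overline{g_1(G)}$; hence $\sigma(l_2)=\overline{l_2(G)}$ and $l_2\in\mathrm{NS}(G)$. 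Together with the trivial inclusion $\mathrm{NS}(G)+\mathrm{NS}(G)+\mathrm{NS}(G)\subset\mathrm{B}(G)$ this finishes the proof. I expect no conceptual difficulty; the work is bookkeeping -- checking that $bH$ still supplies disjoint Helson--Cantor sets inside prescribed open sets (Theorems~41.5 and~41.13 in \cite{hr}, applicable because $bH$ is an infinite compact Abelian group) and that every statement about products, ranges and spectra is only harmlessly perturbed (by adjoining $\{0\}$, via Proposition~\ref{prz}) on passing from $\mathrm{A}(bH)$ down to $\mathrm{B}(H)$ and then into $\mathrm{B}(G)$.
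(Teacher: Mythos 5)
Your proposal is correct and is precisely the adaptation the paper has in mind: the paper gives no details for Theorem \ref{dodd} beyond the remark that it is a straightforward modification of Theorem \ref{rozklad} combined with the extension-by-zero procedure of Proposition \ref{prz}, and that is exactly what you carry out (including the one genuinely new verification, namely that $l_{2}$, now merely supported on $H$ rather than almost periodic, still lies in $\mathrm{NS}(G)$ because $g_{0}g_{1}=0$). The only blemish is a harmless sign slip, $\sigma(-r_{i}g_{i})=-r_{i}\overline{g_{i}(G)}$ rather than $r_{i}\overline{g_{i}(G)}$, which does not affect the argument since the same set appears on both sides of your comparison with $\overline{l_{2}(G)}$.
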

\subsection{Spectrally reasonable elements}
It follows from Theorem \ref{dodd} that the set $\mathrm{NS}(G)$ is not closed under addition for discrete groups $G$ containing an infinite Abelian subgroups so it is meaningful to introduce the notion of elements which perturb the elements with a natural spectrum to elements with a natural spectrum. This concept was studied extensively in \cite{ow2} (for Abelian groups) and all facts stated in this subsection can be proved analogously in the present context as in the cited paper and consequently the arguments are omitted.
\begin{de}
We say that $f\in \mathrm{B}(G)$ is \textbf{spectrally reasonable}, if $f+g\in \mathrm{NS}(G)$ for all $g \in \mathrm{NS}(G)$. The set of all spectrally reasonable measures will be denoted by $\mathrm{S}(G)$.
\end{de}
Let us collect first the basic information on the set $\mathrm{NS}(G)$ itself.
\begin{prop}\label{mon}
The set of all elements with natural spectra is a self-adjoint ($f\in \mathrm{NS}(G)$ $\Rightarrow$ $\overline{f}\in \mathrm{NS}(G)$), closed subset of $\mathrm{B}(G)$ which is also closed under multiplication by complex numbers. Moreover, this set is closed under functional calculus, i.e. for $f\in \mathrm{B}(G)$ and a holomorphic function $F$ defined on some open neighbourhood of $\sigma(f)$ we have $F(f)\in \mathrm{NS}(G)$.
\end{prop}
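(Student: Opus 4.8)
The plan is to dispatch the four assertions one at a time; each reduces to standard Gelfand theory for $\mathrm{B}(G)$ together with the inclusion $G\subset\triangle(\mathrm{B}(G))$ by point evaluations, which I would use in two ways: it gives $\overline{f(G)}\subset\sigma(f)$ for every $f\in\mathrm{B}(G)$ (so that naturality means precisely the reverse inclusion $\sigma(f)\subset\overline{f(G)}$), and it gives the contractive embedding $\mathrm{B}(G)\hookrightarrow\ell^{\infty}(G)$, i.e.\ $\|f\|_{\infty}\le\|f\|_{\mathrm{B}(G)}$.

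For self-adjointness I would observe that complex conjugation $J\colon f\mapsto\overline{f}$ is a conjugate-linear, isometric, unital automorphism of $(\mathrm{B}(G),\cdot)$: if $f$ is a coefficient of a unitary representation, then $\overline{f}$ is a coefficient of its conjugate representation. Hence $f-\lambda\mathbf{1}_{G}$ is invertible if and only if $\overline{f}-\overline{\lambda}\mathbf{1}_{G}$ is, so $\sigma(\overline{f})$ is the complex conjugate of $\sigma(f)$; since conjugation is a homeomorphism of $\mathbb{C}$ it commutes with closure, and $\sigma(f)=\overline{f(G)}$ then immediately yields $\sigma(\overline{f})=\overline{\overline{f}(G)}$.

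For closedness under holomorphic functional calculus --- which also settles multiplication by a scalar $\lambda$ by taking $F(z)=\lambda z$, the case $\lambda=0$ being immediate from $\sigma(0)=\{0\}$ --- I would take $f\in\mathrm{NS}(G)$ and $F$ holomorphic on a neighbourhood of $\sigma(f)=\overline{f(G)}$, so that $F(f)\in\mathrm{B}(G)$. Applying any $\varphi\in\triangle(\mathrm{B}(G))$ to the Cauchy integral defining $F(f)$ gives $\varphi(F(f))=F(\varphi(f))$; specialising $\varphi$ to evaluations at points of $G$ shows $(F(f))(G)=F(f(G))$. The spectral mapping theorem then gives $\sigma(F(f))=F(\sigma(f))=F\bigl(\overline{f(G)}\bigr)$, and continuity of $F$ on the compact set $\overline{f(G)}$ turns the right-hand side into $\overline{F(f(G))}=\overline{(F(f))(G)}$; hence $F(f)\in\mathrm{NS}(G)$.

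The only point that needs a genuine idea is that $\mathrm{NS}(G)$ is topologically closed, since spectra are in general only upper semicontinuous for the norm topology. The plan here is to bypass the moving sets $\sigma(f_{n})$ entirely: if $f_{n}\to f$ in $\mathrm{B}(G)$ with $f_{n}\in\mathrm{NS}(G)$ and $\varphi\in\triangle(\mathrm{B}(G))$ is fixed, then $\varphi(f_{n})\in\sigma(f_{n})=\overline{f_{n}(G)}$, and since $\|f_{n}-f\|_{\infty}\le\|f_{n}-f\|_{\mathrm{B}(G)}$ every point of $\overline{f_{n}(G)}$ lies within $\|f_{n}-f\|_{\mathrm{B}(G)}$ of the fixed compact set $\overline{f(G)}$. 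As $\varphi$ is contractive, $\varphi(f_{n})\to\varphi(f)$, whence $\varphi(f)\in\overline{f(G)}$; since $\varphi$ was arbitrary this gives $\sigma(f)\subset\overline{f(G)}$, and with the trivial reverse inclusion we conclude $f\in\mathrm{NS}(G)$. I expect this last step to be the crux, but as the sketch shows it is in fact short once the contractive inclusion into $\ell^{\infty}(G)$ is exploited.
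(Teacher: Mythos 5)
Your proof is correct and complete; the paper itself omits the argument for this proposition, deferring to the analogous statements in \cite{ow2}, and your four steps (conjugation as a conjugate-linear isometric unital automorphism, the identity $\varphi(F(f))=F(\varphi(f))$ specialised to point evaluations together with the spectral mapping theorem, and the contractive embedding $\mathrm{B}(G)\hookrightarrow\ell^{\infty}(G)$) constitute exactly the standard route taken there. In particular your handling of the one delicate point --- topological closedness despite spectra being only upper semicontinuous --- via $|\varphi(f_{n})-\varphi(f)|\le\|f_{n}-f\|_{\mathrm{B}(G)}$ and $\mathrm{dist}\bigl(\varphi(f_{n}),\overline{f(G)}\bigr)\le\|f_{n}-f\|_{\mathrm{B}(G)}$ is the intended argument.
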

Unlike $\mathrm{NS}(G)$, the set $\mathrm{S}(G)$ has a rich algebraic structure.
\begin{thm}\label{pod}
The set $\mathrm{S}(G)$ is a closed, unital $^{\ast}$-subalgebra of $\mathrm{B}(G)$.
\end{thm}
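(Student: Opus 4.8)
The plan is to verify in turn that $\mathrm{S}(G)$ is closed under addition and scalar multiplication, closed under multiplication, closed under the involution $f \mapsto \overline{f}$, contains the unit, and is norm-closed. The additive and scalar structure are essentially formal: if $f_1, f_2 \in \mathrm{S}(G)$ and $g \in \mathrm{NS}(G)$, then $g + f_2 \in \mathrm{NS}(G)$ by definition of $\mathrm{S}(G)$, hence $(f_1 + f_2) + g = f_1 + (g + f_2) \in \mathrm{NS}(G)$; similarly $\lambda f \in \mathrm{S}(G)$ for $\lambda \in \mathbb{C}$ since $\mathrm{NS}(G)$ is invariant under scaling by Proposition \ref{mon} (one rescales $g$ by $1/\lambda$ when $\lambda \neq 0$, and $\lambda = 0$ is trivial). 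The unit $\mathbf{1}_G$ lies in $\mathrm{S}(G)$ because adding a constant merely translates the spectrum and the range simultaneously: $\sigma(g + \lambda \mathbf{1}_G) = \sigma(g) + \lambda$ and $\overline{(g+\lambda\mathbf{1}_G)(G)} = \overline{g(G)} + \lambda$. Self-adjointness follows from the self-adjointness of $\mathrm{NS}(G)$ recorded in Proposition \ref{mon}: if $g \in \mathrm{NS}(G)$ then $\overline{g} \in \mathrm{NS}(G)$, so $\overline{f} + g = \overline{f + \overline{g}} \in \mathrm{NS}(G)$, using that complex conjugation commutes with taking spectra and closures of ranges.

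The substantive points are closure under multiplication and norm-closedness. For multiplication, the natural route is the standard trick from \cite{ow2}: given $f_1, f_2 \in \mathrm{S}(G)$ and $g \in \mathrm{NS}(G)$, one wants $f_1 f_2 + g \in \mathrm{NS}(G)$. The key observation is that $\mathrm{S}(G)$ being already an additive group reduces the problem, via a polarization-type identity, to showing that $f^2 \in \mathrm{S}(G)$ whenever $f \in \mathrm{S}(G)$; and $f^2$, being $F(f)$ for the entire function $F(z) = z^2$, is handled by combining the closure of $\mathrm{NS}(G)$ under holomorphic functional calculus (Proposition \ref{mon}) with a resolvent/Neumann-series argument showing that $\sigma(g + f^2)$ is controlled once $g$ has natural spectrum. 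More precisely, one writes, for $\lambda$ outside $\overline{(g+f^2)(G)}$, the element $g + f^2 - \lambda \mathbf{1}_G$ and uses that $g \in \mathrm{NS}(G)$ together with the perturbation hypothesis on $f$ applied to suitable auxiliary members of $\mathrm{NS}(G)$ to produce an inverse. I expect this to be the main obstacle, since it requires carefully importing the Banach-algebraic lemmas underlying the Abelian argument of \cite{ow2} and checking they use nothing specific to $\mathrm{M}(\widehat{G})$ beyond the commutative semisimple unital Banach algebra structure of $\mathrm{B}(G)$ and the abstract properties of $\mathrm{NS}(G)$ from Proposition \ref{mon}.

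For norm-closedness, suppose $f_n \to f$ in $\mathrm{B}(G)$ with each $f_n \in \mathrm{S}(G)$, and fix $g \in \mathrm{NS}(G)$. Then $f_n + g \to f + g$, each $f_n + g \in \mathrm{NS}(G)$, and $\mathrm{NS}(G)$ is norm-closed by Proposition \ref{mon}; hence $f + g \in \mathrm{NS}(G)$, so $f \in \mathrm{S}(G)$. This last step is immediate given the cited proposition, so the only real work is the multiplicative closure. Throughout, one leans on the fact, noted in the excerpt, that all the relevant statements from \cite{ow2} were proved using only general Banach-algebra facts and the structural properties of the set of natural-spectrum elements, both of which transfer verbatim to $\mathrm{B}(G)$ for $G$ discrete; accordingly the detailed computations are omitted exactly as the paper indicates.
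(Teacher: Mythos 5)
Your handling of the formal parts is correct and complete: the arguments for closure under addition and scalar multiplication, for the unit, for the involution (using that $\overline{f}+g=\overline{f+\overline{g}}$ together with the self-adjointness of $\mathrm{NS}(G)$ from Proposition \ref{mon}), and for norm-closedness (using that $\mathrm{NS}(G)$ is norm-closed) all go through exactly as you write them. The polarization reduction of multiplicativity to the claim that $f\in\mathrm{S}(G)$ implies $f^{2}\in\mathrm{S}(G)$ is also legitimate once the linear structure is in place. The paper itself gives no proof and defers wholesale to \cite{ow2}, so on these points you are in agreement with it.

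The problem is that the one substantive assertion of the theorem --- closure under multiplication --- is not actually proved in your proposal. What you offer is a placeholder: ``a resolvent/Neumann-series argument'' in which one ``uses the perturbation hypothesis on $f$ applied to suitable auxiliary members of $\mathrm{NS}(G)$ to produce an inverse'' of $f^{2}+g-\lambda\mathbf{1}_{G}$. As described, this mechanism cannot work: deciding invertibility in $\mathrm{B}(G)$ from the range of an element together with norm estimates is precisely what the Wiener--Pitt phenomenon rules out, and a Neumann series for something like $(g-\lambda)^{-1}f^{2}$ has no reason to converge (nor need $g-\lambda$ be invertible). The argument that does work, and the one that transfers from \cite{ow2}, lives on the Gelfand-transform side rather than on the resolvent side. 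The essential point is a \emph{simultaneous approximation} statement: for $f\in\mathrm{S}(G)$, $g\in\mathrm{NS}(G)$ and $\varphi\in\triangle(\mathrm{B}(G))\setminus G$, one exploits the fact that $f+P(g)\in\mathrm{NS}(G)$ for every holomorphic $P$ (here $P(g)\in\mathrm{NS}(G)$ by the functional-calculus part of Proposition \ref{mon}, and then the defining property of $\mathrm{S}(G)$ is applied); choosing $P$ small near $\varphi(g)$ and large on the rest of $\sigma(g)$ forces the existence of $x\in G$ with $f(x)$ close to $\varphi(f)$ \emph{and} $g(x)$ close to $\varphi(g)$ at the same time, and it is this simultaneous control of values that lets one show $\varphi(f_{1}f_{2}+g)\in\overline{(f_{1}f_{2}+g)(G)}$. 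Nothing in your sketch supplies this idea, and since it is the only non-formal step of the theorem, the proposal has a genuine gap at its core; the remaining verifications, while correct, are routine.
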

The results of the next section allow us to show the following theorem supplying us with plenty of examples of spectrally reasonable elements.
\begin{thm}
$\mathrm{B}_{0}(G)\cap \mathrm{NS}(G)\subset \mathrm{S}(G)$.
\end{thm}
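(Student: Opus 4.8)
Fix $f\in\mathrm{B}_{0}(G)\cap\mathrm{NS}(G)$ and an arbitrary $g\in\mathrm{NS}(G)$; the plan is to prove directly that $\sigma(f+g)=\overline{(f+g)(G)}$, the inclusion $\overline{(f+g)(G)}\subseteq\sigma(f+g)$ being automatic (the values $(f+g)(x)$ for $x\in G$ are values of the Gelfand transform of $f+g$, and $\sigma(f+g)$ is closed). Since $\mathrm{B}(G)$ is a commutative, unital, semisimple Banach algebra, $\sigma(f+g)$ is the range of $\widehat{f+g}$, and I would split $\triangle(\mathrm{B}(G))=G\cup h(\mathrm{A}(G))$ exactly as in the proof of Fact \ref{zawpod}. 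On the evaluation functionals $x\in G$ the transform produces exactly $(f+g)(G)$. On $\varphi\in h(\mathrm{A}(G))=\triangle(\mathrm{B}(G))\setminus G$ I would invoke Theorem \ref{glz}: since $f\in\mathrm{B}_{0}(G)\cap\mathrm{NS}(G)$, its Gelfand transform vanishes off $G$, so $\widehat{f+g}(\varphi)=\widehat{g}(\varphi)$, and as $g\in\mathrm{NS}(G)$ this value lies in $\sigma(g)=\overline{g(G)}$. Thus $\sigma(f+g)=(f+g)(G)\cup\widehat{g}(h(\mathrm{A}(G)))$ with $\widehat{g}(h(\mathrm{A}(G)))\subseteq\overline{g(G)}$, and it remains only to show that every $\lambda\in\widehat{g}(h(\mathrm{A}(G)))$ actually belongs to $\overline{(f+g)(G)}$.

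This is where the two hypotheses on $f$ enter in a complementary way. Fix such a $\lambda$, so $\lambda\in\overline{g(G)}$ and $\widehat{g}(\varphi)=\lambda$ for some $\varphi\in h(\mathrm{A}(G))$, and distinguish two cases. If there is a sequence of \emph{pairwise distinct} points $x_{n}\in G$ with $g(x_{n})\to\lambda$, then since $f\in\mathrm{B}_{0}(G)\subseteq c_{0}(G)$ we have $f(x_{n})\to 0$, hence $(f+g)(x_{n})\to\lambda$ and $\lambda\in\overline{(f+g)(G)}$. If no such sequence exists, a point-set argument (if the level sets $\{x\in G:|g(x)-\lambda|<1/n\}$ were all infinite one could extract a sequence of distinct points landing in the first case) shows that $\lambda$ is an isolated point of $\sigma(g)=\overline{g(G)}$ and that $g^{-1}(\{\lambda\})$ is finite. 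In that situation I would take the idempotent $p=F(g)\in\mathrm{B}(G)$ produced by the holomorphic functional calculus with $F\equiv 1$ near $\lambda$ and $F\equiv 0$ near $\sigma(g)\setminus\{\lambda\}$; then $\widehat{p}=\mathbf{1}_{\widehat{g}^{-1}(\{\lambda\})}$, so $p$, viewed as a function on $G$, equals $\mathbf{1}_{g^{-1}(\{\lambda\})}$, which is finitely supported and therefore lies in $\mathrm{A}(G)$. Consequently $\widehat{p}$ vanishes on $h(\mathrm{A}(G))$ by definition of that set, contradicting $\widehat{p}(\varphi)=1$ (since $\widehat{g}(\varphi)=\lambda$). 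Hence the second case cannot occur, we are always in the first, and $\sigma(f+g)=(f+g)(G)\cup\widehat{g}(h(\mathrm{A}(G)))\subseteq\overline{(f+g)(G)}$. Thus $f+g\in\mathrm{NS}(G)$, and as $g$ was arbitrary, $f\in\mathrm{S}(G)$.

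The main obstacle is the isolated-value case: one has to make sure that the failure of the first alternative genuinely forces $\lambda$ to be an isolated point of $\overline{g(G)}$ with finite fibre — the extraction argument above is routine but needs care — and one has to be precise that the Riesz idempotent really is an element of $\mathrm{A}(G)$ (which is unproblematic here, since elements of $\mathrm{B}(G)$ are functions on $G$ and every finitely supported function on $G$ belongs to $\mathrm{A}(G)$). Everything else is bookkeeping with the Gelfand transform and with the decomposition $\triangle(\mathrm{B}(G))=G\sqcup h(\mathrm{A}(G))$, together with the input from Theorem \ref{glz}.
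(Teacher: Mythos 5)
Your proof is correct, and it is essentially the argument the paper intends: the paper omits the proof, saying only that it follows from the results of the next section (namely Theorem \ref{glz}\eqref{zafone}) in analogy with \cite{ow2}, and your write-up supplies exactly that — the splitting $\triangle(\mathrm{B}(G))=G\cup h(\mathrm{A}(G))$, the vanishing of $\widehat{f}$ off $G$, and the $c_{0}$ decay of $f$ along distinct points approximating $\varphi(g)$. Your treatment of the remaining case (that $\varphi(g)$ for $\varphi\in h(\mathrm{A}(G))$ cannot be an isolated value with finite fibre, via the Riesz idempotent landing in $c_{00}(G)\subset\mathrm{A}(G)$) is the same device the paper uses in Lemma \ref{brakiz}, and it closes the one genuine gap in the naive argument.
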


\section{On \texorpdfstring{$\mathrm{B}_{0}(G)\cap \mathrm{NS}(G)$}{intersection of B0(G) with NS(G)}}\label{Section Zafran}
This section is devoted to proving the analogues of the results from the paper of M. Zafran \cite{Zafran} for a discrete group $G$. Let us recall first the main theorem from this publication (see Theorem 3.2 in \cite{Zafran}).
\begin{thm}[Zafran]
Let $G$ be a compact Abelian group. Let
\begin{equation*}
\mathscr{C}=\{\mu\in M_{0}(G):\sigma(\mu)=\widehat{\mu}(\widehat{G})\cup\{0\}\}.
\end{equation*}
\begin{enumerate}[{\normalfont (i)}]
  \item If $h\in\triangle(M_{0}(G))\setminus{\widehat{G}}$ then $h(\mu)=0$ for all $\mu\in\mathscr{C}$.
  \item $\mathscr{C}$ is a closed ideal in $M_{0}(G)$.
  \item $\triangle(\mathscr{C})=\widehat{G}$.
\end{enumerate}
\end{thm}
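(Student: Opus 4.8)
The plan is to prove (i) directly by spectral idempotents and then obtain (ii) and (iii) as essentially formal consequences. Throughout write $A:=\mathrm{M}_{0}(G)$, let $A^{\sim}$ be its unitisation (realised as $A+\mathbb{C}\delta_{0}$ inside $\mathrm{M}(G)$, so that $\sigma$ may be read in $\mathrm{M}(G)$ or in $A^{\sim}$), and record the elementary facts I would use: since $\widehat{\mu}\in c_{0}(\widehat{G})$ for $\mu\in\mathrm{M}_{0}(G)$, the set $\widehat{\mu}(\widehat{G})\cup\{0\}$ is compact and $\mu$ is non-invertible in $\mathrm{M}(G)$, so $0\in\sigma(\mu)$ always and $\mathscr{C}$ is exactly the set of $\mu\in\mathrm{M}_{0}(G)$ with natural spectrum; $L^{1}(G)\subset\mathscr{C}$ by Wiener's theorem; and $L^{1}(G)$ is a closed ideal of $\mathrm{M}_{0}(G)$ with $\triangle(L^{1}(G))=\widehat{G}$, so any character of $\mathrm{M}_{0}(G)$ not vanishing on $L^{1}(G)$ agrees there with evaluation at some $\gamma\in\widehat{G}$ and hence equals that evaluation on all of $\mathrm{M}_{0}(G)$.

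For (i), fix $h\in\triangle(\mathrm{M}_{0}(G))\setminus\widehat{G}$ and $\mu\in\mathscr{C}$, and suppose for contradiction that $\lambda:=h(\mu)\neq 0$. Then $\lambda\in\sigma(\mu)=\widehat{\mu}(\widehat{G})\cup\{0\}$, so $\lambda$ is attained by $\widehat{\mu}$ at only finitely many characters (as $\widehat{\mu}\to 0$) and is an isolated point of $\sigma(\mu)$; let $e$ be the spectral idempotent it determines by the holomorphic functional calculus along a small circle enclosing $\lambda$ and no other point of $\sigma(\mu)$. The Fourier transform of $e$ is the indicator of the finite set $\{\gamma:\widehat{\mu}(\gamma)=\lambda\}$, so $e$ is a trigonometric polynomial, in particular $e\in L^{1}(G)\subset\mathrm{M}_{0}(G)$; moreover $\psi(e)=1$ when $\psi(\mu)=\lambda$ and $\psi(e)=0$ otherwise, for every character $\psi$ of $\mathrm{M}(G)$ or of $A^{\sim}$. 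Since $h\notin\widehat{G}$, $h$ vanishes on $L^{1}(G)$, hence $h(e)=0$. Then $(1-e)\mu\in\mathrm{M}_{0}(G)$ and $h((1-e)\mu)=(1-h(e))h(\mu)=\lambda$, so $\lambda\in\sigma_{A^{\sim}}((1-e)\mu)$. But every character $\psi$ of $A^{\sim}$ sends $(1-e)\mu$ either to $0$ (if $\psi(e)=1$) or to $\psi(\mu)\neq\lambda$ (if $\psi(e)=0$), so $\lambda\notin\sigma_{A^{\sim}}((1-e)\mu)$ — a contradiction. Hence $h(\mu)=0$.

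Granting (i), the rest is bookkeeping with Gelfand transforms. For the ideal property, take $\mu\in\mathscr{C}$ and $\nu\in\mathrm{M}_{0}(G)$ and evaluate an arbitrary $\varphi\in\triangle(A^{\sim})$ on $\mu\nu$: if $\varphi$ arises from $\gamma\in\widehat{G}$ the value is $\widehat{\mu\nu}(\gamma)$, and otherwise $\varphi(\mu)=0$ by (i), so $\varphi(\mu\nu)=0$; with the trivial inclusion $\widehat{\mu\nu}(\widehat{G})\cup\{0\}\subset\sigma(\mu\nu)$ and compactness of the left side this gives $\mu\nu\in\mathscr{C}$. Closedness is the same computation: if $\mu_{n}\to\mu$ in $\mathrm{M}_{0}(G)$ with $\mu_{n}\in\mathscr{C}$, then $\varphi(\mu)=\lim_{n}\varphi(\mu_{n})=0$ for every $\varphi\in\triangle(A^{\sim})\setminus\widehat{G}$, so $\sigma(\mu)=\widehat{\mu}(\widehat{G})\cup\{0\}$. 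Finally, $\triangle(\mathscr{C})$ is canonically the set of restrictions to $\mathscr{C}$ of those $\varphi\in\triangle(A^{\sim})$ not vanishing on $\mathscr{C}$; by (i) every such $\varphi$ lies in $\widehat{G}$, while conversely each $\gamma\in\widehat{G}$ is already non-zero on $L^{1}(G)\subset\mathscr{C}$; so $\triangle(\mathscr{C})=\widehat{G}=\Gamma$ as sets, and since both are discrete — on the $\mathscr{C}$ side $\gamma$ is isolated because the trigonometric polynomial in $L^{1}(G)$ with Fourier transform $\mathbf{1}_{\{\gamma\}}$ separates it from the rest — the identity map between them is a homeomorphism.

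The main obstacle is the single non-formal step in (i): inferring that the spectral idempotent $e$ is a trigonometric polynomial from the fact that $\lambda$ is a non-zero isolated point of $\sigma(\mu)$. This is precisely where the defining property $\widehat{\mu}\in c_{0}(\widehat{G})$ of $\mathrm{M}_{0}$ enters — it is what forces $\{\gamma:\widehat{\mu}(\gamma)=\lambda\}$ to be finite — and it is the reason $\mathrm{M}_{0}(G)$ is so much better behaved than $\mathrm{M}(G)$, where such idempotents are only known to lie in the coset ring. Everything else — the agreement of the idempotent computed in $\mathrm{M}(G)$ with the one in $A^{\sim}$, the fact that $\sigma_{A^{\sim}}$ and $\sigma_{\mathrm{M}(G)}$ differ at most by $\{0\}$ (and not even that here, since $0\in\sigma(\mu)$), and the identity $\triangle(L^{1}(G))=\widehat{G}$ — is routine.
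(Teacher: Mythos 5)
Your proof is correct. Note first that the paper does not actually prove this statement -- it is quoted from Zafran's article -- so the relevant comparisons are with Zafran's original argument (which, as the paper recalls, goes through the identification of the multiplier algebra of $L^{1}(G)$ with $\mathrm{M}(G)$) and with the paper's proof of the non-commutative analogue, Theorem \ref{glz}. Your route to (i) is genuinely different from both: you fix the hypothetical non-zero value $\lambda=h(\mu)$, observe that naturality of $\sigma(\mu)$ together with $\widehat{\mu}\in c_{0}(\widehat{G})$ forces $\lambda$ to be an isolated spectral value attained at only finitely many characters, and extract the spectral idempotent $e$, which is then a trigonometric polynomial lying in $L^{1}(G)$; the contradiction is that $h(e)$ must equal both $0$ (since $h$ annihilates $L^{1}(G)$) and $F(h(\mu))=1$ -- your detour through $(1-e)\mu$ reaches the same contradiction. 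The paper's proof of Theorem \ref{glz}(i) instead truncates $f=f\chi_{K}+(f-f\chi_{K})$ with $K$ finite and $|f|<\varepsilon$ off $K$, and leans on Lemma \ref{przelicz} (stability of $\mathrm{B}_{0}(G)\cap\mathrm{NS}(G)$ under addition), which in turn rests on the cardinality argument of Proposition \ref{trr}. That method has the advantage of surviving non-commutativity, whereas your identification of $e$ as a trigonometric polynomial uses the uniqueness theorem for Fourier--Stieltjes transforms and the finiteness of $\{\gamma:\widehat{\mu}(\gamma)=\lambda\}$, and is therefore tied to the Abelian setting; what it buys is a short, self-contained argument that needs neither Wendel's theorem nor the perfect-set trick. (The two approaches do share a germ: the paper's Lemma \ref{brakiz}, that isolated spectral values are attained, is proved by the same spectral-idempotent device.) Your derivations of (ii) and (iii) from (i) -- realising $\mathscr{C}$ as the intersection of $\mathrm{M}_{0}(G)$ with the kernels of all characters outside $\widehat{G}$, and identifying $\triangle(\mathscr{C})$ with the characters of the unitisation not annihilating $\mathscr{C}$, which by (i) and the presence of $L^{1}(G)\subset\mathscr{C}$ is exactly the discrete set $\widehat{G}$ -- are the same formal bookkeeping the paper performs for parts (ii) and (iii) of Theorem \ref{glz}.
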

The missing ingredient to show the analogous theorem (with $M_{0}(G)$ replaced by $\mathrm{B}_{0}(G):=\mathrm{B}(G)\cap c_{0}(G)$ for $G$-discrete) is the identification of the multiplier algebra of $L^{1}(G)$ with $\mathrm{M}(G)$ (sometimes called Wendel's theorem). In our setting we have to answer the following question: is the multiplier algebra of $\mathrm{A}(G)$ equal to $\mathrm{B}(G)$? This problem was solved by V. Losert in \cite{l} and it turned out that every multiplier on $\mathrm{A}(G)$ is represented by some element of $\mathrm{B}(G)$ if and only if $G$ is amenable.

Taking this into account, we are able to repeat the original Zafran's proof for amenable groups but for non-amenable groups we can only obtain the analogous assertion concerning the whole multiplier algebra of $\mathrm{A}(G)$ (i.e. we replace $\mathrm{B}(G)$ in the above formulation by the multiplier algebra of $\mathrm{A}(G)$) which is not enough since then we calculate the spectrum of an element in a much bigger algebra with non-equivalent norm. In the first subsection we will show how to bypass this difficulty by means of an elementary trick (see Proposition \ref{trr}).

The next part of this section is an elaboration of the properties of the notions of mutual singularity and absolute continuity which will have great importance for the investigations given in the last subsection ($L$-ideal property of Zafran's ideal and a criterion for non-naturality of the spectrum).
\subsection{First results}
\begin{lem}\label{brakiz}
Let $f\in \mathrm{B}(G)$ and let $\lambda$ be an isolated point of $\sigma(f)$. Then there exists $x\in G$ such that $f(x)=\lambda$.
\end{lem}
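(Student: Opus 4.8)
The plan is to exploit the holomorphic functional calculus together with the fact that $\mathrm{B}(G)$ is semisimple, so that multiplicative-linear functionals separate points and $\sigma(f)=\widehat{f}(\triangle(\mathrm{B}(G)))$. Since $\lambda$ is isolated in $\sigma(f)$, choose a small circle $\gamma$ enclosing $\lambda$ and no other point of the spectrum, and set $p:=\frac{1}{2\pi i}\int_{\gamma}(z e-f)^{-1}\,dz\in \mathrm{B}(G)$, the associated spectral (Riesz) idempotent. Then $p$ is an idempotent in $\mathrm{B}(G)$ whose Gelfand transform is the indicator of the clopen set $\{\varphi\in\triangle(\mathrm{B}(G)):\widehat{f}(\varphi)=\lambda\}$; in particular $p\neq 0$. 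The function $fp$ then satisfies $\widehat{fp}=\lambda\,\widehat{p}$, i.e. $(f-\lambda e)p=0$ in $\mathrm{B}(G)$, by semisimplicity.

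The key point is now to understand idempotents in $\mathrm{B}(G)$. First I would recall that a nonzero idempotent $p\in \mathrm{B}(G)$, being a positive-definite-spanned function taking only the values $0$ and $1$ on $G$ (its values lie in $\widehat{p}(\triangle)=\{0,1\}$), must have nonempty support; more precisely, $p=\mathbf{1}_{S}$ for some nonempty subset $S\subseteq G$, and one knows (this is the idempotent theorem for $\mathrm{B}(G)$, due to Host) that $S$ lies in the coset ring of $G$, though for our purposes we only need $S\neq\emptyset$. Pick any $x\in S$, so $p(x)=1$. The relation $(f-\lambda e)p=0$ evaluated pointwise at $x$ gives $(f(x)-\lambda)\cdot 1=0$, hence $f(x)=\lambda$, which is exactly the claim.

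The main obstacle is the nonvanishing of $p$ and its identification as an indicator function supported on a nonempty subset of $G$: one must be careful that the Riesz idempotent associated to an isolated spectral point is genuinely nonzero (this is standard, since its Gelfand transform is the indicator of a nonempty clopen subset of the Gelfand space), and that an idempotent of $\mathrm{B}(G)$ really is the restriction to $G$ of a $\{0,1\}$-valued function — but this is immediate because $\mathrm{B}(G)$ consists of functions on $G$ and $p^2=p$ forces $p(x)\in\{0,1\}$ for every $x\in G$, so that $\{x:p(x)=1\}$ is nonempty precisely when $p\neq 0$. An alternative, slightly slicker route avoiding the idempotent theorem entirely: once $(f-\lambda e)p=0$ with $p\neq 0$, simply observe $p(x_0)\ne 0$ for some $x_0\in G$ since $p$ is a nonzero function, and evaluate at $x_0$ to conclude $f(x_0)=\lambda$. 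I would use this shorter version in the writeup.
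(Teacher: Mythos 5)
Your proof is correct and essentially identical to the paper's: both apply the holomorphic functional calculus to the locally constant function equal to $1$ near $\lambda$ and $0$ on the rest of $\sigma(f)$, and both conclude from the fact that the resulting element has a nonzero Gelfand transform (at some $\varphi$ with $\varphi(f)=\lambda$) while its values on $G$ are constrained. The only cosmetic difference is that the paper argues by contradiction (if $\lambda\notin f(G)$ the element vanishes identically on $G$, hence is zero by pointwise evaluation, contradicting $\widehat{g}(\varphi)=1$), whereas your streamlined final version argues directly via $(f-\lambda e)p=0$ and $p(x_0)\neq 0$; you are right that the detour through Host's idempotent theorem is unnecessary.
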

\begin{proof}
By the assumption we are able to find two disjoint open sets $U,V\subset\mathbb{C}$ such that $\lambda\in U$ and $\sigma(f)\setminus\{\lambda\}\subset V$. Since the sets $U$ and $V$ are disjoint and satisfy $\sigma(f)\subset U\cup V$ the function $H$ defined by the formula
\begin{equation*}
H(t)=\left\{\begin{array}{rl}
           0,&\text{ for }t\in V  \\
           1,&\text{ for }t\in U
         \end{array}\right.
\end{equation*}
is holomorphic on an open set containing $\sigma(f)$. Hence we are allowed to apply the functional calculus obtaining an element $h:= H\circ f\in \mathrm{B}(G)$. Since $\lambda \notin f(G)$, we have $h(x)=0$ for every $x\in G$, yet there exists $\varphi\in\triangle(\mathrm{B}(G))$ such that $\varphi(h)=H(\varphi(f))=H(\lambda)=1$, which results in a contradiction.
\end{proof}
The following proposition shows that if the spectrum of an element is not natural then it is automatically uncountable.
\begin{prop}\label{trr}
Let $f\in \mathrm{B}(G)\setminus \mathrm{NS}(G)$. Then $\sigma(f)$ contains a perfect set. In particular, $\sigma(f)$ has the cardinality of the continuum.
\end{prop}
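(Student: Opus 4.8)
The plan is to squeeze the result out of Lemma~\ref{brakiz} by a short point-set topology argument; no new construction (Riesz product, functional calculus, etc.) is required. Write $E:=\sigma(f)\setminus\overline{f(G)}$. Recalling that $G$, viewed as point evaluations, is contained in $\triangle(\mathrm{B}(G))$, we have $\overline{f(G)}\subset\sigma(f)$ always, so the hypothesis $f\notin\mathrm{NS}(G)$ is exactly the statement $E\neq\emptyset$. Since $\overline{f(G)}$ is closed, $E=\sigma(f)\cap W$ with $W:=\mathbb{C}\setminus\overline{f(G)}$ open, so $E$ is relatively open in $\sigma(f)$; the key consequence I would record is that any $\lambda\in E$ which is isolated in $E$ is already isolated in $\sigma(f)$ (if $U\cap E=\{\lambda\}$ for an open $U$, then $U\cap W$ is an open neighbourhood of $\lambda$ meeting $\sigma(f)$ only in $\lambda$). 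Combined with Lemma~\ref{brakiz}, no point of $E$ can be isolated in $E$, because an isolated point of $\sigma(f)$ lies in $f(G)\subset\overline{f(G)}$.

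Next I would show that the set $P:=\overline{E}$ (closure taken in $\mathbb{C}$, which lies inside the closed set $\sigma(f)$, so $P\subset\sigma(f)$) is the perfect set we want. It is nonempty and closed by construction, so only the absence of isolated points needs checking. Let $\mu\in P$. If $\mu\notin E$ then, since $\mu$ is a limit of points of $E\subset P$, every neighbourhood of $\mu$ contains a point of $P$ other than $\mu$. If $\mu\in E$ and $\mu$ were isolated in $P$, then (as $E\subset P$) it would be isolated in $E$, hence in $\sigma(f)$, hence attained as a value of $f$ by Lemma~\ref{brakiz} --- contradicting $\mu\in E$. Thus $P$ is perfect and $\sigma(f)\supset P$ contains a perfect set. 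For the final clause, a nonempty perfect subset of the complete metric space $\mathbb{C}$ has at least the cardinality of the continuum (the usual Cantor scheme), while $\sigma(f)\subset\mathbb{C}$ forces $|\sigma(f)|\le\mathfrak{c}$; hence $|\sigma(f)|=\mathfrak{c}$.

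I do not expect a genuine obstacle here: the whole content is that the ``superfluous'' part of the spectrum not only avoids isolated points of $\sigma(f)$ (Lemma~\ref{brakiz}) but cannot even acquire isolated points relative to itself, so that its closure is perfect. The only point calling for a little care is to keep the three notions of ``isolated'' --- in $E$, in $\overline{E}$, and in $\sigma(f)$ --- apart; the passage between them is justified solely by $\overline{f(G)}$ being closed, which is what makes $E$ relatively open in $\sigma(f)$.
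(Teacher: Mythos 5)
Your proposal is correct and is essentially the paper's own argument: the paper sets $X=\sigma(f)\setminus\overline{f(G)}$, invokes Lemma \ref{brakiz} to conclude $X$ is dense-in-itself, and takes $\overline{X}$ as the perfect set. You have merely spelled out the two steps the paper leaves implicit (that relative openness of $X$ in $\sigma(f)$ upgrades ``isolated in $X$'' to ``isolated in $\sigma(f)$'', and that the closure of a dense-in-itself set is perfect), both of which are handled correctly.
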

\begin{proof}
Let $X=\sigma(f)\setminus\overline{f(G)}$. By Lemma \ref{brakiz} the set $X$ is dense-in-itself. Hence $\overline{X}\subset\sigma(f)$ is a perfect set.
\end{proof}
We will use the convention: $\mathrm{B}_{0}(G):= \mathrm{B}(G)\cap c_{0}(G)$.
\begin{lem}\label{przelicz}
Let $f_{1},f_{2}\in \mathrm{B}_{0}(G)\cap \mathrm{NS}(G)$. Then we also have $f_{1}+f_{2}\in \mathrm{B}_{0}(G)\cap \mathrm{NS}(G)$.
\end{lem}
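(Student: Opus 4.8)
The plan is to show that $\sigma(f_1+f_2)$ is a countable set and then invoke Proposition \ref{trr} to force naturality. The containment $f_1+f_2 \in \mathrm{B}_0(G)$ needs no argument, since $\mathrm{B}_0(G) = \mathrm{B}(G) \cap c_0(G)$ is an intersection of linear subspaces of $\ell^\infty(G)$; so the only substance is the claim that $f_1+f_2$ has a natural spectrum. A preliminary remark I would record is that for any $f \in \mathrm{B}_0(G)$ the range $f(G)$ is countable and $\overline{f(G)} = f(G) \cup \{0\}$: as $f \in c_0(G)$ and $G$ is discrete, each set $\{x : |f(x)| \geq 1/n\}$ is finite, so $\{x : f(x) \neq 0\}$ is countable, and a nonzero limit point of $f(G)$ would have to be attained. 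In particular, if moreover $f \in \mathrm{NS}(G)$, then $\sigma(f) = \overline{f(G)} = f(G) \cup \{0\}$ is countable.

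Now for the main step. Let $\lambda \in \sigma(f_1+f_2)$. Since $\mathrm{B}(G)$ is a commutative unital Banach algebra, its spectrum coincides with the range of the Gelfand transform, so there is $\varphi \in \triangle(\mathrm{B}(G))$ with $\lambda = \varphi(f_1+f_2) = \varphi(f_1) + \varphi(f_2)$. For $i=1,2$ we have $\varphi(f_i) = \widehat{f_i}(\varphi) \in \sigma(f_i) = f_i(G) \cup \{0\}$, using $f_i \in \mathrm{B}_0(G) \cap \mathrm{NS}(G)$ and the preliminary remark. Hence
\[
\sigma(f_1+f_2) \subseteq \bigl(f_1(G) \cup \{0\}\bigr) + \bigl(f_2(G) \cup \{0\}\bigr),
\]
and the set on the right is a sum of two countable sets, so it is countable; therefore $\sigma(f_1+f_2)$ is countable.

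Finally, a countable subset of $\mathbb{C}$ contains no perfect set, so by the contrapositive of Proposition \ref{trr} we must have $f_1 + f_2 \in \mathrm{NS}(G)$, i.e. $\sigma(f_1+f_2) = \overline{(f_1+f_2)(G)}$. I do not anticipate a genuine obstacle here; the one thing that requires noticing is that Proposition \ref{trr} is precisely the tool needed to upgrade the cheap countability bound on $\sigma(f_1+f_2)$ to that equality — arguing directly that every spectral value lies in $\overline{(f_1+f_2)(G)}$ would instead require extra control over which functionals $\varphi \in \triangle(\mathrm{B}(G))$ can occur.
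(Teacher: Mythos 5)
Your proof is correct and is essentially the paper's own argument: the paper likewise notes $\sigma(f_1+f_2)\subset\sigma(f_1)+\sigma(f_2)$ by Gelfand theory, observes that each $\sigma(f_i)=\overline{f_i(G)}$ is countable since $f_i\in c_0(G)$, and concludes via Proposition \ref{trr}. You have merely filled in the routine countability details more explicitly.
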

\begin{proof}
By standard Gelfand theory $\sigma(f_{1}+f_{2})\subset\sigma(f_{1})+\sigma(f_{2})$ which finishes the proof by Proposition \ref{trr}, since the algebraic sum of two countable sets is countable.
\end{proof}
We are ready now to prove the analogue of the main theorem from Zafran's paper \cite{Zafran}.
\begin{thm}\label{glz}
The following hold true:
\begin{enumerate}[{\normalfont (i)}]
  \item\label{zafone} If $\varphi\in\triangle(\mathrm{B}(G))\setminus G$ then $\varphi(f)=0$ for every $f\in \mathrm{B}_{0}(G)\cap \mathrm{NS}(G)$.
  \item\label{zaftwo} $\mathrm{B}_{0}(G)\cap \mathrm{NS}(G)$ is a closed ideal in $\mathrm{B}(G)$.
  \item\label{zafthree} $\triangle(\mathrm{B}_{0}(G)\cap \mathrm{NS}(G))=G$.
\end{enumerate}
\end{thm}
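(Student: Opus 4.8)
The plan is to adapt Zafran's argument from \cite{Zafran}, exploiting that in the discrete setting a function in $c_0(G)$ has finite level sets away from $0$. For part (i), recall from the proof of Fact \ref{zawpod} that $\triangle(\mathrm{B}(G))$ is the disjoint union of $G$ and $h(\mathrm{A}(G))=\{\varphi:\varphi|_{\mathrm{A}(G)}=0\}$, so a functional $\varphi\in\triangle(\mathrm{B}(G))\setminus G$ is exactly one annihilating $\mathrm{A}(G)$. I would fix such a $\varphi$ and $f\in \mathrm{B}_0(G)\cap \mathrm{NS}(G)$, assume $\lambda:=\varphi(f)\neq 0$, and derive a contradiction. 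Naturality together with $f\in c_0(G)$ gives $\sigma(f)=\overline{f(G)}=f(G)\cup\{0\}$, so $\lambda\in f(G)$, the level set $E:=f^{-1}(\{\lambda\})$ is finite, and $\lambda$ is isolated in $\sigma(f)$. Separating $\lambda$ from $\sigma(f)\setminus\{\lambda\}$ by disjoint open sets and applying the holomorphic functional calculus to the locally constant function equal to $1$ near $\lambda$ and $0$ near the rest yields an idempotent $g\in \mathrm{B}(G)$; since $\mathrm{B}(G)$ is semisimple, its Gelfand transform restricts to $g$ on $G$, and a direct check shows $g=\mathbf{1}_E$, a finitely supported function, hence $g\in \mathrm{A}(G)$. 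Then $\varphi(g)=0$ since $\varphi$ annihilates $\mathrm{A}(G)$, while $\varphi(g)=F(\varphi(f))=F(\lambda)=1$ by the functional calculus — a contradiction.

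Granting (i), part (ii) should be quick. That $\mathrm{B}_0(G)=\mathrm{B}(G)\cap c_0(G)$ is a closed ideal in $\mathrm{B}(G)$ is routine (the pointwise product of a $c_0$-function with a bounded function is $c_0$, and the $\mathrm{B}(G)$-norm dominates the supremum norm); combined with closedness of $\mathrm{NS}(G)$ (Proposition \ref{mon}), Lemma \ref{przelicz}, and stability under scalars, this makes $\mathscr{C}:=\mathrm{B}_0(G)\cap \mathrm{NS}(G)$ a closed linear subspace. For the ideal property I would take $f\in\mathscr{C}$ and $h\in \mathrm{B}(G)$, note $hf\in \mathrm{B}_0(G)$, and verify naturality of $\sigma(hf)$: one has $0\in\overline{(hf)(G)}\subseteq\sigma(hf)$ automatically, and for $\lambda\in\sigma(hf)\setminus\{0\}$ pick $\varphi\in\triangle(\mathrm{B}(G))$ with $\varphi(hf)=\lambda$; if $\varphi\notin G$ then part (i) forces $\varphi(f)=0$, hence $\lambda=\varphi(h)\varphi(f)=0$, a contradiction, so $\varphi\in G$ and $\lambda\in(hf)(G)$.

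For part (iii) I would invoke the standard description of the Gelfand space of a closed ideal: $\triangle(\mathscr{C})$ is homeomorphic to $\{\varphi|_{\mathscr{C}}:\varphi\in\triangle(\mathrm{B}(G)),\ \varphi|_{\mathscr{C}}\neq 0\}$. By part (i) every such $\varphi$ lies in $G$, and conversely each $\delta_x$ lies in $\mathrm{A}(G)\subseteq\mathscr{C}$ with $\delta_x(x)=1$, so the point evaluations are nonzero, pairwise distinct functionals on $\mathscr{C}$; the idempotents $\widehat{\delta_x}$ then show each singleton is open in $\triangle(\mathscr{C})$, so the natural bijection $G\to\triangle(\mathscr{C})$ is a homeomorphism. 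The main obstacle, and really the only step requiring care, is the verification in part (i) that the functional-calculus idempotent $g$ coincides with the indicator $\mathbf{1}_E$ of a finite set — that is, that the abstract functional calculus in $\mathrm{B}(G)$ is compatible with evaluation at points of $G$; once that is in place, parts (ii) and (iii) are formal consequences of (i) and the facts already established.
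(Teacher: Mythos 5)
Your argument is correct, but part (i) is proved by a genuinely different route than the paper's. The paper follows Zafran's original scheme: given $\varepsilon>0$ it writes $f=f\chi_{K}+(f-f\chi_{K})$ with $K$ finite chosen so that $|f|<\varepsilon$ off $K$; the first summand lies in $c_{00}(G)\subset\mathrm{A}(G)$ and is killed by $\varphi$, while the second is again in $\mathrm{B}_{0}(G)\cap\mathrm{NS}(G)$ by Lemma \ref{przelicz}, so $\varphi(f-f\chi_K)$ lies in $(f-f\chi_K)(G)\cup\{0\}$ and hence has modulus $<\varepsilon$; letting $\varepsilon\to 0$ gives $\varphi(f)=0$. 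You instead exploit that a nonzero value $\lambda=\varphi(f)$ of a natural-spectrum element of $c_0(G)$ is an isolated point of $\sigma(f)$ with finite level set, and that the functional-calculus idempotent at $\lambda$ must therefore be the finitely supported function $\mathbf{1}_{f^{-1}(\lambda)}\in\mathrm{A}(G)$, which $\varphi$ annihilates while the functional calculus forces $\varphi$ to send it to $1$. This is a legitimate strengthening of the paper's own Lemma \ref{brakiz} and has the advantage of not invoking Lemma \ref{przelicz} in part (i) at all (you still need it, together with Proposition \ref{mon}, for the linear-subspace part of (ii)); the paper's $\varepsilon$-decomposition has the advantage that the identical computation reproves (iii) directly for multiplicative functionals on the ideal, whereas you appeal to the standard hull--kernel description of $\triangle$ of a closed ideal (Lemma 2.2.15 in \cite{kan}, which the paper uses elsewhere anyway). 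Your derivations of (ii) and (iii) from (i) are the intended ones, just spelled out in more detail than the paper's ``follows immediately''.
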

\begin{proof}
We start with \eqref{zafone}. Let $f\in \mathrm{B}_{0}(G)\cap \mathrm{NS}(G)$ and let us fix $\varepsilon>0$. Then there exists a finite set $K\subset G$ such that $|f(x)|<\varepsilon$ for $x\notin K$. Let $g\in c_{00}\subset \mathrm{A}(G)$ be equal to $\chi_{K}$. Define $f_{1}:=f\cdot g$ and $f_{2}=f-f_{1}$. Of course, $f=f_{1}+f_{2}$ and $f_{1}\in c_{00}(G)\subset \mathrm{A}(G)$. Since $\mathrm{A}(G)$ is a closed ideal in $\mathrm{B}(G)$ we have the following partition of $\triangle(\mathrm{B}(G))$ (see Lemma 2.2.15 in \cite{kan}):
\begin{equation*}
\triangle(\mathrm{B}(G))=\triangle(\mathrm{A}(G))\cup h(\mathrm{A}(G)),
\end{equation*}
where $h(\mathrm{A}(G))=\{\varphi\in\triangle \mathrm{B}(G):\varphi|_{\mathrm{A}(G)}=0\}$.

Recalling that $\triangle(\mathrm{A}(G))=G$ we get $\varphi(f_{1})=0$ for $\varphi\in\triangle(B(G))\setminus G$. We shall show $|\varphi(f_{2})|<\varepsilon$. From the basics of Gelfand theory we have $\varphi(f_{2})\in\sigma(f_{2})=f_{2}(G)\cup\{0\}$ ($f_{2}\in \mathrm{B}_{0}(G)\cap \mathrm{NS}(G)$ because $f,f_{1}\in \mathrm{B}_{0}(G)\cap \mathrm{NS}(G)$ and $f_{2}=f-f_{1}$ so we are able to use Lemma \ref{przelicz}). If $\varphi(f_{2})=0$ then we are done. Otherwise $\varphi(f_{2})=f_{2}(x)$ for some $x\in G$. For $x\in K$, the definition of $f_{2}$ implies $f_{2}(x)=0$. Finally, if $x\notin K$ then $|\varphi(f_{2})|=|f_{2}(x)|=|f(x)|<\varepsilon$ by the choice of $K$.

Part \eqref{zaftwo} follows immediately from part \eqref{zafone}. Passing to the proof of \eqref{zafthree} suppose that there is $\varphi\in \triangle(\mathrm{B}_{0}(G)\cap \mathrm{NS}(G))\setminus G$ and take any $f\in \mathrm{B}_{0}(G)\cap \mathrm{NS}(G)$. Then decompose $f=f_{1}+f_{2}$ as in the proof of \eqref{zafone}. Exactly the same arguments as before show $\varphi(f_{1})=0$ and $|\varphi(f_{2})|<\varepsilon$ finishing the proof since $\varepsilon>0$ was arbitrary.
\end{proof}
\begin{rem}\label{rad}
The first item of Theorem \ref{glz} implies that $\mathrm{B}_{0}(G)\cap \mathrm{NS}(G)$ is a \textit{radical ideal}, i.e. if $f^{n}\in \mathrm{B}_{0}(G)\cap \mathrm{NS}(G)$ for some $n\in\mathbb{N}$ then $f\in \mathrm{B}_{0}(G)\cap \mathrm{NS}(G)$.
\end{rem}
In order to pass further in generalisations of the results from Zafran's paper \cite{Zafran} we need to establish the notion of mutual singularity and absolute continuity of elements in $\mathrm{B}(G)$.

\subsection{Mutual singularity and absolute continuity in \texorpdfstring{$\mathrm{B}(G)$}{\mathrm{B}(G)}}\label{mssandas}

We start with the reminder concerning the polar decomposition of functionals on C$^{\ast}$-algebras (see for example chapter III, section 4 in \cite{tak}).

Let $A$ be C$^{\ast}$-algebra and let $\varphi\in A^{\ast}$. Then $\varphi$ can be seen as an element of the predual $\mathsf{M}_{\ast}$ of $\mathsf{M}=A^{\ast\ast}$ (the universal enveloping von Neumann algebra of $A$). Recall that any element $x\in\mathsf{M}$ acts on $\varphi$ both from left and right via $(x\varphi)(y):=\varphi(yx)$ and $(\varphi x)(y) = \varphi(xy)$; the convention might seem surprising but it gives $\mathsf{M}_{\ast}$ the structure of an $\mathsf{M}$--$\mathsf{M}$-bimodule. In this setting we have the following \textbf{polar decomposition} of $\varphi$:
\begin{equation}\label{pol}
\varphi=v|\varphi|\text{ where }v\in\mathsf{M} \text{ is a partial isometry and }|\varphi|\in A^{\ast}_{+},
\end{equation}
where we denoted by $A^{\ast}_{+}$ the set of positive functionals on $A$. This polar decomposition comes from the polar decomposition of some element of $\mathsf{M}$. Indeed, there always exists $x \in \mathsf{M}$ of norm one, such that $\varphi(x) = \|\varphi\|$. Using the polar decomposition of $x^{\ast}=v|x^{\ast}|$, we find a partial isometry $v$ and define $|\varphi|:=v^{\ast}\varphi$; it is not hard to check that $|\varphi|$ is positive and $\varphi=v|\varphi|$.

In the case of operators we have a unique polar decomposition, provided that we assume something about the initial space of the partial isometry involved in it. To mimic this criterion in the setting of normal functionals, we need to introduce the notion of support. For each $\varphi\in A^{\ast}_{+}$ there exists the smallest projection $e$ in $\mathsf{M}$ such that $\varphi=e\varphi=\varphi e=e\varphi e$. This projection is called the \textbf{support} of $\varphi$ and is denoted by $\mathrm{s}(\varphi)$. Coming back to the polar decomposition, the partial isometry $v\in\mathsf{M}$ and $|\varphi|\in A^{\ast}_{+}$ is uniquely determined by $\varphi$, the equation ($\ref{pol}$) and the property that $v^{\ast}v$ is the support of $|\varphi|$.

In our investigations we will mainly use the notion of the \textbf{central support} of a linear functional $\varphi\in A^{\ast}$ which is the smallest central projection $\mathrm{zs}(\varphi)$ in $\mathsf{M}$ such that $\varphi$ vanishes on $(1-\mathrm{zs}(\varphi))\mathsf{M}$. Note that the definition of central support does not require positivity of $\varphi$. The usual notion of support is unambiguous for hermitian linear functionals but for general ones we should work with left and right supports, which often is cumbersome. Using the polar decomposition, it is not hard to verify that $\mathrm{zs}(\varphi)=\mathrm{zs}(|\varphi|)$.

If we consider a positive functional, there is an alternative point of view on the central support. Let $\varphi \in A^{\ast}_{+}$. We can perform the GNS construction with respect to it, resulting in a cyclic representation $\pi:A \to \mathrm{B}(\mathsf{H})$, where we have a cyclic vector $\Omega \in \mathsf{H}$ such that $\varphi(x) = \langle \Omega, \pi(x) \Omega \rangle$. This representation extends to a normal representation $\overline{\pi}: \mathsf{M} \to \mathrm{B}(\mathsf{H})$, whose image is the von Neumann algebra generated by $\pi(A)$. The kernel of $\overline{\pi}$ is a weak$^{\ast}$-closed two-sided ideal of the von Neumann algebra $\mathsf{M}$. It is well-known that such ideals are of the form $z\mathsf{M}$ for some central projection $z \in \mathsf{M}$. 
Therefore the von Neumann algebra generated by $\pi(A)$ may be identified with $\mathsf{M} \slash z\mathsf{M} \simeq (1-z)\mathsf{M}$, and $(1-z)$ is the central support of $\varphi$.

We borrow the notion of absolute continuity and mutual singularity of functionals on a C$^{\ast}$-algebra from R. Exel (see \cite{ex}).
\begin{de}
Let $\varphi,\psi\in A^{\ast}$ where $A$ is a C$^{\ast}$-algebra. We say that $\varphi$ is absolutely continuous with respect to $\psi$ ($\varphi\ll \psi$) if $\mathrm{zs}(\varphi)\leq \mathrm{zs}(\psi)$; $\varphi$ and $\psi$ are mutually singular ($\varphi\bot \psi$) if $\mathrm{zs}(\varphi)\mathrm{zs}(\psi)=0$.
\end{de}
In our situation $A=C^{\ast}(G)$, $\mathsf{M}_{\ast}=A^{\ast}=\mathrm{B}(G)$ and $\mathsf{M}=\mathrm{B}(G)^{\ast}=\left(C^{\ast}(G)\right)^{\ast\ast}=: W^{\ast}(G)$. We shall use the following analogue of the Lebesgue decomposition (see Proposition 7 in \cite{ex}), whose proof we will include for the reader's convenience.
\begin{prop}\label{rl}
Let $\varphi,\psi\in A^{\ast}$ where $A$ is a C$^{\ast}$-algebra. Then $\varphi$ can be uniquely decomposed as a sum $\varphi=\varphi_{1}+\varphi_{2}$ such that $\varphi_{1}\ll \psi$ and $\varphi_{2}\bot \psi$. Moreover, if $\varphi=u|\varphi|$, $\varphi_{1}=u_{1}|\varphi_{1}|$, and $\varphi_{2}=u_{2}|\varphi_{2}|$ are polar decompositions of $\varphi$, $\varphi_{1}$, and $\varphi_{2}$ then we have
\begin{enumerate}[{\normalfont (i)}]
  \item $|\varphi|=|\varphi_{1}|+|\varphi_{2}|$, $u_{1}=u\cdot \mathrm{zs}(\psi)$, $u_{2}=u(1-\mathrm{zs}(\psi))$,
  \item $\|\varphi\|=\|\varphi_{1}\|+\|\varphi_{2}\|$,
  \item $|\varphi|=|\varphi_{1}|+|\varphi_{2}|$ is the unique decomposition of $|\varphi|$ satisfying $|\varphi_{1}|\ll \psi$ and $|\varphi_{2}|\bot \psi$.
\end{enumerate}
\end{prop}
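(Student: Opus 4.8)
The plan is to build the decomposition directly from the central support $z := \mathrm{zs}(\psi) \in \mathsf{M}$, which is a central projection. Given the polar decomposition $\varphi = u|\varphi|$, I would set $\varphi_1 := z\varphi$ and $\varphi_2 := (1-z)\varphi$. Since $z$ is central, left and right multiplication by $z$ agree on $\mathsf{M}_\ast$, so $\varphi_1 = z\varphi = \varphi z = z\varphi z$ and likewise for $\varphi_2$; hence $\mathrm{zs}(\varphi_1) \le z = \mathrm{zs}(\psi)$, giving $\varphi_1 \ll \psi$, while $\mathrm{zs}(\varphi_2) \le 1-z$, which is orthogonal to $z$, giving $\varphi_2 \bot \psi$. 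Clearly $\varphi = \varphi_1 + \varphi_2$. For uniqueness: if $\varphi = \varphi_1' + \varphi_2'$ is another such splitting, then $\varphi_1' = \mathrm{zs}(\varphi_1')\varphi_1'$ and $\mathrm{zs}(\varphi_1') \le z$ force $\varphi_1' = z\varphi_1'$; multiplying $\varphi = \varphi_1' + \varphi_2'$ on the left by $z$ and using $z\varphi_2' = 0$ (because $\mathrm{zs}(\varphi_2')z = 0$) yields $z\varphi = \varphi_1'$, i.e. $\varphi_1' = \varphi_1$, and similarly $\varphi_2' = \varphi_2$.

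For the statements involving the polar decompositions I would proceed as follows. Because $z$ is a central projection, $u z$ is again a partial isometry and $z|\varphi|$ is a positive functional (it equals $|\varphi|$ precomposed with multiplication by the central projection $z$); moreover $z|\varphi| \ge 0$ and $(1-z)|\varphi| \ge 0$ add up to $|\varphi|$. I claim $\varphi_1 = (uz)\,(z|\varphi|)$ is the polar decomposition of $\varphi_1$: indeed $z\varphi = z u |\varphi| = u z |\varphi|$ using centrality of $z$, and one checks the initial projection condition $(uz)^\ast(uz) = z u^\ast u z = z\,\mathrm{s}(|\varphi|)\,z = \mathrm{s}(z|\varphi|)$, the last equality holding because for a central $z$ the support of $z|\varphi|$ is exactly $z\cdot\mathrm{s}(|\varphi|)$. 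By uniqueness of polar decomposition this gives $u_1 = uz = u\cdot\mathrm{zs}(\psi)$ and $|\varphi_1| = z|\varphi|$; symmetrically $u_2 = u(1-z)$ and $|\varphi_2| = (1-z)|\varphi|$, so $|\varphi| = |\varphi_1| + |\varphi_2|$, which is (i). For (ii), since $z|\varphi|$ and $(1-z)|\varphi|$ are positive with orthogonal central supports, their norms add: $\||\varphi|\| = \|z|\varphi|\| + \|(1-z)|\varphi|\|$, and $\|\varphi\| = \||\varphi|\|$, $\|\varphi_i\| = \||\varphi_i|\|$ (polar decomposition preserves norm). For (iii), $|\varphi_1| = z|\varphi|$ has central support $\le z$ so $|\varphi_1| \ll \psi$, and $|\varphi_2| = (1-z)|\varphi| \bot \psi$; uniqueness of this decomposition of $|\varphi|$ follows from the uniqueness part applied to the positive functional $|\varphi|$ in place of $\varphi$.

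The only genuinely delicate point is the identity $\mathrm{s}(z|\varphi|) = z\,\mathrm{s}(|\varphi|)$ for a central projection $z$, together with the compatibility of polar decomposition with cutting by a central projection; everything else is bookkeeping with central projections. I would establish the support identity by noting that $z\,\mathrm{s}(|\varphi|)$ is a projection satisfying $(z|\varphi|)\big(z\,\mathrm{s}(|\varphi|)\big) = z\,|\varphi|\,\mathrm{s}(|\varphi|) = z|\varphi|$ and, conversely, that any projection $p$ with $p(z|\varphi|) = z|\varphi|$ must dominate $z\,\mathrm{s}(|\varphi|)$ after multiplying by $z$ and invoking minimality of $\mathrm{s}(|\varphi|)$ for $|\varphi|$. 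Once this is in hand the proposition is immediate; alternatively one can simply cite Proposition 7 in \cite{ex}, but since the paper promises a self-contained argument, the route above via the central support of $\psi$ is the cleanest way to present it.
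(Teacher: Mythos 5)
Your proposal is correct and follows essentially the same route as the paper: both define $\varphi_{1}:=\mathrm{zs}(\psi)\varphi$ and $\varphi_{2}:=(1-\mathrm{zs}(\psi))\varphi$ and derive uniqueness by cutting an arbitrary decomposition with the central projection $\mathrm{zs}(\psi)$. The only difference is one of completeness, not of method: the paper dismisses items (i)--(iii) with ``the rest of the proof is very similar,'' whereas you supply the verification that $u\,\mathrm{zs}(\psi)$ and $\mathrm{zs}(\psi)|\varphi|$ give the polar decomposition of $\varphi_{1}$, including the support identity $\mathrm{s}(\mathrm{zs}(\psi)|\varphi|)=\mathrm{zs}(\psi)\,\mathrm{s}(|\varphi|)$, which is accurate and worth recording.
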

\begin{proof}
We define $\varphi_1:= \mathrm{zs}(\psi)\varphi$ and $\varphi_{2}:=(1-\mathrm{zs}(\psi))\varphi$; they give the desired decomposition. It is obviously unique, because the $\varphi_{2}$ part in the decomposition vanishes on $\mathrm{zs}(\psi)A^{\ast\ast}$, so $\varphi_{1}$ equals $\mathrm{zs}(\psi)\varphi$ on $\mathrm{zs}(\psi)A^{\ast\ast}$ -- it follows that $(\varphi_{1} - \mathrm{zs}(\psi)\varphi)$ vanishes on both $\mathrm{zs}(\psi)A^{\ast\ast}$ and $(1-\mathrm{zs}(\psi))A^{\ast\ast}$, so it vanishes everywhere, hence $\varphi_{1}=\mathrm{zs}(\psi)\varphi$. The rest of the proof is very similar.
\end{proof}
Before we go any further, let us discuss the Abelian case, in which there is no distinction between the support and the central support. Since the supports that we employ here live in an abstract, huge von Neumann algebra $A^{\ast\ast}$, we would like to present now a characterisation of absolute continuity that does not use the bidual and will allow us to prove that our notions of absolute continuity and singularity correspond to the classical ones in the Abelian case. We begin with a lemma.
\begin{lem}
Let $A$ be a $C^{\ast}$-algebra and let $\varphi \in A^{\ast}$. For any element $x \in A^{\ast\ast}$ we have $x\cdot \varphi \ll \varphi$ and $\varphi\cdot x \ll \varphi$.
\end{lem}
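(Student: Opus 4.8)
The plan is to argue straight from the defining property of the central support. Write $z := \mathrm{zs}(\varphi)$, the smallest central projection of $\mathsf{M} = A^{\ast\ast}$ such that $\varphi$ annihilates $(1-z)\mathsf{M}$. Since the left and right actions keep us inside the predual $\mathsf{M}_{\ast} = A^{\ast}$ (this is exactly the bimodule structure on $\mathsf{M}_{\ast}$ recalled above), the central supports $\mathrm{zs}(x\varphi)$ and $\mathrm{zs}(\varphi x)$ are defined, and to obtain $x\varphi \ll \varphi$ and $\varphi x \ll \varphi$ it suffices to check that both $x\varphi$ and $\varphi x$ vanish on $(1-z)\mathsf{M}$; minimality of the central support then forces $\mathrm{zs}(x\varphi) \le z$ and $\mathrm{zs}(\varphi x) \le z$.

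For the verification I would simply exploit that $z$, hence $1-z$, is central. Given any $y \in \mathsf{M}$, the left-action convention $(x\varphi)(w) = \varphi(wx)$ gives
\[
(x\varphi)\big((1-z)y\big) = \varphi\big((1-z)yx\big) = 0,
\]
because $(1-z)yx = (1-z)\big((1-z)yx\big) \in (1-z)\mathsf{M}$ by centrality of $1-z$, and $\varphi$ kills $(1-z)\mathsf{M}$. Symmetrically, using $(\varphi x)(w) = \varphi(xw)$ and centrality once more,
\[
(\varphi x)\big((1-z)y\big) = \varphi\big(x(1-z)y\big) = \varphi\big((1-z)xy\big) = 0,
\]
since $(1-z)xy \in (1-z)\mathsf{M}$. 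This establishes both absolute continuity relations.

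I do not expect any real obstacle here: the content of the lemma is that multiplying $\varphi$ on either side by an element of $A^{\ast\ast}$ cannot enlarge the central portion of $\mathsf{M}$ that carries the functional, and the proof collapses to the one-line computation above once one records that $1-\mathrm{zs}(\varphi)$ is a central projection. The only mild points to keep track of are that $x\varphi$ and $\varphi x$ remain in the predual, so that $\mathrm{zs}$ actually applies to them, and the left/right bookkeeping of the bimodule action, which is immaterial precisely because $\mathrm{zs}(\varphi)$ is central.
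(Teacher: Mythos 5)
Your argument is correct and is essentially the same as the paper's: both proofs reduce the claim to checking that $x\varphi$ and $\varphi x$ vanish on $(1-\mathrm{zs}(\varphi))A^{\ast\ast}$, using only the centrality of $1-\mathrm{zs}(\varphi)$ to commute it past $x$ and $y$. Your write-up just makes the left/right bookkeeping slightly more explicit.
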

\begin{proof}
Since $\mathrm{zs}(\varphi)$ is central, we get $x(1-\mathrm{zs}(\varphi))y=(1-\mathrm{zs}(\varphi))xy$ for any $y\in A^{\ast\ast}$, therefore both $x\cdot \varphi$ and $\varphi\cdot x$ vanish on $(1-\mathrm{ zs}(\varphi))A^{\ast\ast}$.
\end{proof}
\begin{prop}\label{Prop:translation}
Let $A$ be a $C^{\ast}$-algebra and let $\varphi,\psi \in A^{\ast}$. Then $\varphi \ll \psi$ if and only if we can approximate $\varphi$ in norm by linear combinations of functionals of the form $x\cdot \psi\cdot y$, where $x,y \in A$. In fact, $x$ and $y$ can be drawn from any dense subalgebra $\mathcal{A}$. In particular, if $A=C^{\ast}(G)$ for a discrete group $G$, we can take $\mathcal{A}=\mathbb{C}[G]$ to show that $\varphi \ll \psi$ if and only if $\varphi$ is a limit in $\mathrm{B}(G)$ of linear combinations of (two-sided) translates of $\psi$.
\end{prop}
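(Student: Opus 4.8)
The plan is to prove the stated equivalence in two halves, the second being the substantial one. Throughout write $\mathsf{M}=A^{\ast\ast}$ and use that $\varphi\ll\psi$ is the same as $\varphi$ vanishing on $(1-\mathrm{zs}(\psi))\mathsf{M}$ (immediate from the minimality defining $\mathrm{zs}(\varphi)$); in particular $\{\eta\in A^{\ast}:\eta\ll\psi\}$ is the pre-annihilator in $A^{\ast}$ of the subspace $(1-\mathrm{zs}(\psi))\mathsf{M}$ of $(A^{\ast})^{\ast}=\mathsf{M}$, hence a \emph{norm-closed} linear subspace. For the easy direction, note that for $x,y\in\mathcal{A}$ the preceding lemma (applied to $\psi$, then to $\psi\cdot y$, together with transitivity of $\ll$) gives $x\cdot\psi\cdot y\ll\psi$; since $\{\eta\ll\psi\}$ is a norm-closed subspace, every norm limit of linear combinations of such functionals is again $\ll\psi$.

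For the converse, assume $\varphi\ll\psi$ and let $V$ be the norm closure of the linear span of $\{x\cdot\psi\cdot y:x,y\in\mathcal{A}\}$. By Hahn--Banach it suffices to check that every $m\in\mathsf{M}=(A^{\ast})^{\ast}$ annihilating $V$ also satisfies $\varphi(m)=0$. Unwinding the bimodule action, $m$ annihilates $V$ precisely when $\psi(ymx)=0$ for all $x,y\in\mathcal{A}$; since $(x,y)\mapsto\psi(ymx)$ is a bounded bilinear form on $A$ and $\mathcal{A}$ is dense, this already forces $\psi(ymx)=0$ for all $x,y\in A$.

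The crux will be converting this into a statement about central supports. Using that $A$ is weak$^{\ast}$-dense in $\mathsf{M}$, that multiplication on $\mathsf{M}$ is separately weak$^{\ast}$-continuous, and that $\psi$ is normal, one extends the last identity successively to $\psi(bma)=0$ for all $a,b\in\mathsf{M}$; that is, $\psi$ vanishes on the two-sided ideal $\mathsf{M}m\mathsf{M}$ and, again by normality of $\psi$, on its weak$^{\ast}$-closure. This closure is $z_{0}\mathsf{M}$, where $z_{0}$ is the central support of $m$: it is a weak$^{\ast}$-closed two-sided ideal containing $m$ and $\mathsf{M}m\mathsf{M}$, any weak$^{\ast}$-closed two-sided ideal $z\mathsf{M}$ containing $m$ has $z\geq z_{0}$, and conversely $\mathsf{M}m\mathsf{M}\subseteq z_{0}\mathsf{M}$. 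Hence $\mathrm{zs}(\psi)\leq 1-z_{0}$; combined with $\mathrm{zs}(\varphi)\leq\mathrm{zs}(\psi)$ this yields $\mathrm{zs}(\varphi)z_{0}=0$, so $\mathrm{zs}(\varphi)m=\mathrm{zs}(\varphi)z_{0}m=0$, and therefore $\varphi(m)=\varphi(\mathrm{zs}(\varphi)m)=0$ since $\varphi$ vanishes on $(1-\mathrm{zs}(\varphi))\mathsf{M}$. This shows $\varphi\in V$, completing the equivalence.

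The argument works verbatim with any dense subalgebra $\mathcal{A}$ in place of $A$, because the annihilator condition ``$\psi(ymx)=0$ for all $x,y\in\mathcal{A}$'' is, by the density step, independent of $\mathcal{A}$; this is the ``in fact'' clause. Specialising to $A=C^{\ast}(G)$ with $G$ discrete and $\mathcal{A}=\mathbb{C}[G]$, one observes that $\delta_{g}\cdot\psi\cdot\delta_{h}$ is, as a function on $G$, the two-sided translate $s\mapsto\psi(hsg)$; since a general element of $\mathbb{C}[G]$ is a finite linear combination of group elements, the closed span of the $x\cdot\psi\cdot y$ is exactly the closure in $\mathrm{B}(G)=A^{\ast}$ of linear combinations of two-sided translates of $\psi$, which gives the last assertion. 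I expect the only real obstacle to be the third paragraph — passing from ``$m$ annihilates all two-sided translates of $\psi$'' to ``the central support of $m$ is orthogonal to $\mathrm{zs}(\psi)$'' via the identification of the weak$^{\ast}$-closed ideal generated by $m$; the rest is Hahn--Banach together with routine weak$^{\ast}$-continuity and density bookkeeping.
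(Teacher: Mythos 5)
Your proof is correct, but it takes a genuinely different route from the paper's. The paper first reduces to a positive functional: it approximates $|\psi|=u^{\ast}\psi$ in norm by translates $x_i\cdot\psi$ (approximating the partial isometry $u^{\ast}$ weak$^{\ast}$ by elements of $\mathcal{A}$ and passing to convex combinations to upgrade weak to norm convergence), then performs the GNS construction for $|\psi|$ and shows that the functionals $y^{\ast}|\psi|z$ are linearly dense in $\mathrm{vN}(|\psi|)_{\ast}$ by a duality argument whose key ingredient is cyclicity of the GNS vector; the hypothesis $\varphi\ll\psi$ enters only through the identification $\mathrm{vN}(|\psi|)_{\ast}\simeq \mathrm{zs}(\psi)A^{\ast}$. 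You instead run the Hahn--Banach duality directly in the bidual $\mathsf{M}=A^{\ast\ast}$ without reducing to the positive case: an element $m\in\mathsf{M}$ annihilating all two-sided translates of $\psi$ forces, by weak$^{\ast}$-density and normality, that $\psi$ vanishes on the weak$^{\ast}$-closed ideal $z_{0}\mathsf{M}$ generated by $m$, whence $\mathrm{zs}(\psi)\leq 1-z_{0}$ and then $\varphi(m)=0$ from $\mathrm{zs}(\varphi)\leq\mathrm{zs}(\psi)$. This trades the GNS/cyclicity step for the structure theorem on weak$^{\ast}$-closed two-sided ideals (which the paper invokes elsewhere anyway), and it buys you a cleaner argument: no polar decomposition, no convex-combination upgrade from weak to norm convergence, and the role of the hypothesis $\varphi\ll\psi$ is completely transparent. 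Your treatment of the easy direction (the set $\{\eta:\eta\ll\psi\}$ is the pre-annihilator of $(1-\mathrm{zs}(\psi))\mathsf{M}$, hence a norm-closed subspace) and of the dense-subalgebra and $\mathbb{C}[G]$ specialisations is also sound.
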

\begin{proof}
From the lemma it follows that limits of translates of a given functional are absolutely continuous with respect to it, so we just have to verify the reverse implication.

First of all, let us prove that we can approximate $|\psi|$ by translates of $\psi$ so that we may assume that $\psi\geqslant 0$. Using the polar decomposition, we get $\psi=u|\psi|$ and $|\psi|=u^{\ast}\psi$ for some partial isometry $u \in A^{\ast\ast}$. Of course $A$ is weak$^{\ast}$-dense in $A^{\ast\ast}$, so $u^{\ast}$ may be viewed as a weak$^{\ast}$-limit of elements of $A$. In fact, any norm-dense subalgebra $\mathcal{A}$ of $A$ is weak$^{\ast}$-dense in $A^{\ast\ast}$, so we may assume that these elements belong to $\mathcal{A}$. There exists a net $(x_i)_{i\in I} \subset \mathcal{A}$ that tends to $u^{\ast}$ in the weak$^{\ast}$-topology. It follows that the functionals $x_{i} \cdot \psi$ converge weakly to $u^{\ast}\psi=|\psi|$. By passing to convex combinations, we may assume that they converge in norm. Therefore it suffices now to show that if $\varphi \ll \psi$ then $\varphi$ is a norm-limit of two-sided translates of $|\psi|$. Using $|\psi|$, we perform a GNS construction, to obtain a triple $(\pi, \mathsf{H}, \Omega)$, where $\pi:A \to \mathrm{B}(\mathsf{H})$ is a $\ast$-homomorphism and $\Omega \in \mathsf{H}$ is a cyclic vector such that $|\psi|(x) = \langle \Omega, \pi(x) \Omega\rangle$ for $x\in A$. Note that for $y,z\in A$ we get $(y^{\ast}|\psi|z)(x) =\langle \pi(y)\Omega, \pi(x) \pi(z)\Omega\rangle$. Denote by $\mathrm{vN}(|\psi|)$ the von Neumann algebra generated by $\pi(A)$; then these functionals extend to normal functionals on $\mathrm{vN}(|\psi|)$ given by $(y^{\ast}|\psi|z)(x) = \langle \pi(y)\Omega, x \pi(z)\Omega\rangle$ for $x\in\mathrm{vN}(|\psi|)$. Since $\varphi \ll |\psi|$, we know that $\varphi \in \mathrm{vN}(|\psi|)_{\ast}$, the predual of $\mathrm{vN}(|\psi|)$. We will show that functionals $y^{\ast}|\psi|z$ are linearly dense in $\mathrm{vN}(|\psi|)_{\ast}$. If they were not, there would be a non-zero functional on $\mathrm{vN}(|\psi|)_{\ast}$ that vanished on all of them. Such a functional is necessarily represented by an element $x \in \mathrm{vN}(|\psi|)$. It follows that $\langle \pi(y)\Omega, x \pi(z)\Omega\rangle=0$ for all $y,z \in A$. Since $\Omega$ is a cyclic vector, it follows that $\langle \eta, x\xi\rangle=0$ for all $\eta,\xi \in \mathsf{H}$, so $x=0$; this finishes the proof.
\end{proof}
We are ready to connect our notion of support with the classical one.
\begin{prop}\label{rowno}
Let $G$ be a compact Abelian group and let $\mu_{1},\mu_{2} \in \mathrm{M}(G)$. Then
\begin{enumerate}[{\normalfont (i)}]
\item\label{abscontone} $\mu_{1}\ll \mu_{2}$ if and only if $\widehat{\mu_1} \ll \widehat{\mu_2}$ in $B(\widehat{G})$;
\item\label{absconttwo} $\mu_1 \bot \mu_2$ if and only if $\widehat{\mu_1} \bot \widehat{\mu_2}$ in $B(\widehat{G})$.
\end{enumerate}
\end{prop}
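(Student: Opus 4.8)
The plan is to use Proposition~\ref{Prop:translation} to replace the relation $\ll$ by an approximation statement internal to $\mathrm{B}(\Gamma)$, and then to recognise that statement as the classical one by elementary measure theory. Throughout I use the Bochner identification $\mathrm{B}(\Gamma)\cong\mathrm{M}(G)$, legitimate because $\Gamma=\widehat{G}$ is a discrete Abelian group with $\widehat{\Gamma}=G$. The first point to settle is the dictionary between translations in $C^{\ast}(\Gamma)$ and character multiplications in $\mathrm{M}(G)$: since $\Gamma$ is Abelian, left and right translates coincide, and for $\gamma_{0}\in\Gamma$ the translate $\gamma_{0}\cdot\widehat{\mu}$ of $\widehat{\mu}$ equals $\widehat{\nu}$, where $d\nu(g)=\overline{\langle\gamma_{0},g\rangle}\,d\mu(g)$. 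Hence a finite linear combination of translates of $\widehat{\mu_{2}}$ is exactly $\widehat{P\mu_{2}}$ for a trigonometric polynomial $P$ on $G$, and, since $|P\mu_{2}|=|P|\,|\mu_{2}|$, its $\mathrm{B}(\Gamma)$-norm equals $\|P\|_{L^{1}(|\mu_{2}|)}$.

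For the absolute-continuity assertion, Proposition~\ref{Prop:translation} (with $\mathcal{A}=\mathbb{C}[\Gamma]$) gives that $\widehat{\mu_{1}}\ll\widehat{\mu_{2}}$ holds if and only if $\mu_{1}$ belongs to the total-variation closure of $\{P\mu_{2}:P\text{ a trigonometric polynomial on }G\}$. The map $h\mapsto h\mu_{2}$ is a linear isometry of $L^{1}(|\mu_{2}|)$ onto the band $\{\nu\in\mathrm{M}(G):\nu\ll\mu_{2}\}$ (Radon--Nikodym together with the polar decomposition of $\mu_{2}$), so this band is norm-closed; moreover trigonometric polynomials are dense in $L^{1}(|\mu_{2}|)$, being uniformly dense in $C(G)$ by Stone--Weierstrass while $C(G)$ is dense in $L^{1}(|\mu_{2}|)$. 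Therefore the closure above is precisely $\{\nu:\nu\ll\mu_{2}\}$, which is the desired equivalence $\mu_{1}\ll\mu_{2}\iff\widehat{\mu_{1}}\ll\widehat{\mu_{2}}$.

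For the singularity assertion I would first record a description of mutual singularity that is valid on both sides. On a $C^{\ast}$-algebra $A$: $\varphi\bot\psi$ if and only if no non-zero $\rho\in A^{\ast}$ satisfies $\rho\ll\varphi$ and $\rho\ll\psi$. Indeed, such a $\rho$ would have $\mathrm{zs}(\rho)\leq\mathrm{zs}(\varphi)\mathrm{zs}(\psi)=0$, hence $\rho=0$; conversely, if $z:=\mathrm{zs}(\varphi)\mathrm{zs}(\psi)\neq0$, then $zA^{\ast\ast}$ is a non-zero von Neumann algebra and any non-zero normal positive functional on it, extended by $0$ on $(1-z)A^{\ast\ast}$, is an element $\rho\in A^{\ast}$ with $\mathrm{zs}(\rho)\leq z$, so $\rho\ll\varphi$ and $\rho\ll\psi$. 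The analogous description holds for complex measures: one direction is classical, and if $\mu_{1}\not\bot\mu_{2}$ then the absolutely continuous part $\mu_{1}^{a}$ of the Lebesgue decomposition of $\mu_{1}$ with respect to $\mu_{2}$ is non-zero and satisfies $\mu_{1}^{a}\ll\mu_{1}$ (being a restriction of $\mu_{1}$) and $\mu_{1}^{a}\ll\mu_{2}$. Since every element of $\mathrm{B}(\Gamma)$ is $\widehat{\nu}$ for a unique $\nu\in\mathrm{M}(G)$, combining both descriptions with the already-established equivalence $\nu\ll\mu_{i}\iff\widehat{\nu}\ll\widehat{\mu_{i}}$ gives $\widehat{\mu_{1}}\bot\widehat{\mu_{2}}\iff\mu_{1}\bot\mu_{2}$.

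The main obstacle is not conceptual but bookkeeping: pinning down the translation-versus-character-multiplication correspondence with the correct normalisation, and verifying that the total-variation closure of $\{P\mu_{2}\}$ is the full band of measures absolutely continuous with respect to $\mu_{2}$ (which uses density of trigonometric polynomials in $L^{1}(|\mu_{2}|)$ and norm-closedness of that band). With Proposition~\ref{Prop:translation} in hand, no genuinely hard step remains.
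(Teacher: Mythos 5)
Your proof is correct, and for part \eqref{abscontone} it is essentially the argument of the paper: both reduce $\widehat{\mu_1}\ll\widehat{\mu_2}$ via Proposition~\ref{Prop:translation} to norm-approximation of $\mu_1$ by measures $P\mu_2$ with $P$ a trigonometric polynomial, and then use density of trigonometric polynomials in $L^{1}(|\mu_2|)$. The only cosmetic difference there is how that density is obtained: the paper uses a Hahn--Banach duality argument together with injectivity of the Fourier--Stieltjes transform, while you use Stone--Weierstrass plus density of $C(G)$ in $L^{1}(|\mu_2|)$; both are standard, and your packaging of the two implications into the single statement ``the total-variation closure of $\{P\mu_2\}$ is the band of measures absolutely continuous with respect to $\mu_2$'' is a clean way to get both directions at once. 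Where you genuinely diverge is part \eqref{absconttwo}: the paper disposes of it in one line by appealing to uniqueness of the Lebesgue decomposition in both frameworks (which, as stated, still requires matching the two decompositions), whereas you reformulate mutual singularity purely in terms of absolute continuity --- $\varphi\bot\psi$ iff there is no non-zero common absolutely continuous minorant --- verify this characterisation separately on the $C^{\ast}$-algebra side (via central supports) and on the measure side (via the Lebesgue decomposition), and then transport it through part \eqref{abscontone}. This is more work but is also more self-contained and fills in a step the paper leaves implicit; it buys you a proof of \eqref{absconttwo} that does not require identifying the two Lebesgue decompositions with each other.
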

\begin{proof}
We will just show \eqref{abscontone}; \eqref{absconttwo} will follow from the fact that both frameworks admit unique Lebesgue decompositions. Assume first that $\mu_{1} \ll \mu_{2}$, then by Radon-Nikodym theorem we get $f \in L^{1}(|\mu_2|)$ such that $d\mu_1 = f d\mu_2$.

The function $f$ may be approximated in norm by trigonometric polynomials. Indeed, suppose that the subspace spanned by characters of $G$ is not dense in $L^{1}(|\mu_2|)$. Then, by Hahn-Banach theorem,  there exists $h\in L^{\infty}(|\mu_2|)$ such that $\int_{G} \gamma(g) h(g) d|\mu_2|(g)=0$ for all characters $\gamma \in \widehat{G}$. It follows that the Fourier--Stieltjes transform of the measure $h |\mu_2|$ is zero, so the measure itself is zero, therefore $h = 0$ $|\mu_2|$-almost everywhere, i.e. it is a zero element in $L^{\infty}(|\mu_2|)$.

Since multiplication by a character corresponds to a shift on the Fourier transform side, we get that $\widehat{\mu_1}$ can be approximated by linear combinations of translates of $\widehat{\mu_2}$, therefore $\widehat{\mu_1} \ll \widehat{\mu_2}$ in $B(\widehat{G})$.

If $\widehat{\mu_1} \ll \widehat{\mu_2}$ then by Proposition \ref{Prop:translation} $\widehat{\mu_1}$ can be approximated by linear combinations of (one-sided, because the group is Abelian) translates of $\widehat{\mu_2}$, therefore $\mu_1$ can be approximated by measures of the form $f\mu_2$, where $f$ is a trigonometric polynomial, so $\mu_1 \ll \mu_2$.
\end{proof}
We now immerse ourselves in the non-Abelian world. To attain comfort in working with our notions of support, etc., let us start from discussing very basic properties.
\begin{f}\label{norm}
If $\varphi,\psi\in A^{\ast}$ are mutually singular then $\|\alpha \varphi+\beta \psi\|=|\alpha|\|\varphi\|+|\beta|\|\psi\|$ for all $\alpha,\beta\in\mathbb{C}$.
\end{f}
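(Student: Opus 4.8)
The inequality $\|\alpha\varphi+\beta\psi\|\leq|\alpha|\|\varphi\|+|\beta|\|\psi\|$ is just the triangle inequality, so the whole content is the reverse estimate. First I would dispose of the trivial cases $\alpha=0$ or $\beta=0$ and write $\alpha=|\alpha|u$, $\beta=|\beta|v$ with $u,v\in\mathbb{C}$ unimodular. Set $z:=\mathrm{zs}(\varphi)$ and $w:=\mathrm{zs}(\psi)$; these are central projections in $\mathsf{M}=A^{\ast\ast}$ and the hypothesis of mutual singularity says exactly $zw=0$, hence $w\leq 1-z$ and $z\leq 1-w$.

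The plan is to build a single test element of norm at most one on which $\alpha\varphi+\beta\psi$ attains $|\alpha|\|\varphi\|+|\beta|\|\psi\|$. Since $\varphi$, viewed as a normal functional on $\mathsf{M}$, has the same norm as on $A$, and the unit ball of $\mathsf{M}$ is weak$^{\ast}$-compact, there is $x_{1}\in\mathsf{M}$ with $\|x_{1}\|\leq 1$ and $\varphi(x_{1})=\|\varphi\|$ (absorbing a phase). Because $\varphi$ vanishes on $(1-z)\mathsf{M}$ and $z$ is central, one has $\varphi(y)=\varphi(zyz)$ for all $y$, so replacing $x_{1}$ by $zx_{1}z\in z\mathsf{M}z=z\mathsf{M}$ changes neither $\varphi(x_{1})$ nor increases the norm. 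Similarly choose $x_{2}\in w\mathsf{M}w=w\mathsf{M}$ with $\|x_{2}\|\leq 1$ and $\psi(x_{2})=\|\psi\|$. Now put $x:=\bar{u}\,x_{1}+\bar{v}\,x_{2}$. Since $z+w$ is a central projection and $(z+w)\mathsf{M}=z\mathsf{M}\oplus w\mathsf{M}$ is a direct sum of von Neumann algebras, with $x_{1}$ in the first summand and $x_{2}$ in the second, we get $\|x\|=\max(\|x_{1}\|,\|x_{2}\|)\leq 1$.

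It remains to evaluate. As $x_{2}\in w\mathsf{M}\subseteq(1-z)\mathsf{M}$, we have $\varphi(x_{2})=0$, and symmetrically $\psi(x_{1})=0$; therefore $(\alpha\varphi+\beta\psi)(x)=\alpha\bar{u}\,\varphi(x_{1})+\beta\bar{v}\,\psi(x_{2})=|\alpha|\|\varphi\|+|\beta|\|\psi\|$. Combined with $\|x\|\leq 1$ this yields $\|\alpha\varphi+\beta\psi\|\geq|\alpha|\|\varphi\|+|\beta|\|\psi\|$, completing the argument. (Alternatively one can run this through the Lebesgue-type decomposition of Proposition \ref{rl}, splitting everything along the three orthogonal central projections $z$, $w$, $1-z-w$, but the direct construction above seems cleanest.)

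\textbf{Main point of care.} There is no genuine obstacle here; the only things that require a moment's attention are the identification of $z\mathsf{M}$ and $w\mathsf{M}$ as \emph{orthogonal} $C^{\ast}$-direct summands of $\mathsf{M}$ — which is what makes the norm of $x$ equal to the maximum rather than the sum — and the observation that the central support being central lets us push $x_1$, $x_2$ into these summands while keeping $\varphi$, $\psi$ annihilating the opposite pieces.
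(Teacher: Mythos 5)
Your proof is correct and follows essentially the same route as the paper: decompose $A^{\ast\ast}$ along the orthogonal central supports and observe that the two functionals attain their norms on orthogonal central summands. You merely make explicit (via the test element $x=\bar u x_1+\bar v x_2$ and the identity $\|x\|=\max(\|x_1\|,\|x_2\|)$ in the $C^{\ast}$-direct sum $z\mathsf{M}\oplus w\mathsf{M}$) the step the paper summarises as ``the tasks of finding the norms are independent.''
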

\begin{proof}
Since $\varphi$ and $\psi$ are mutually singular, we get $\mathrm{zs}(\varphi)\cdot \mathrm{zs}(\psi)=0$. We can decompose $A^{\ast\ast}= \mathrm{zs}(\varphi)A^{\ast\ast} \oplus (1-\mathrm{zs}(\varphi))A^{\ast\ast}$, where $1-\mathrm{zs}(\varphi) \geqslant \mathrm{zs}(\psi)$. Since $\alpha \varphi = \alpha \mathrm{zs}(\varphi) \varphi$ and $\beta \psi = \beta \mathrm{zs}(\psi)\psi=\beta(1-\mathrm{zs}(\varphi))\psi$, they attain their norms on $\mathrm{zs}(\varphi)A^{\ast\ast}$ and $(1-\mathrm{zs}(\varphi))A^{\ast\ast}$, respectively. The tasks of finding the norms are independent, hence we get $\|\alpha \varphi+\beta \psi\|=|\alpha|\|\varphi\|+|\beta|\|\psi\|$.
\end{proof}
Classically there is a norm characterization of singularity -- two measures $\mu_1$ and $\mu_2$ are mutually singular iff $\|\mu_1 \pm \mu_2\| = \|\mu_1 \| + \|\mu_2\|$.
Indeed, assume that $\mu_{1},\mu_{2}\in \mathrm{M}(G)$ ($G$ is now a locally compact Abelian group) satisfy this condition (the reverse implication is clear). Of course, $\mu_{1},\mu_{2}\ll |\mu_{1}|+|\mu_{2}|$ so there exist $f_{1},f_{2}\in L^{1}(|\mu|+|\nu|)$ such that for every Borel set $A\subset G$ we have
\begin{equation*}
\mu_{1}(A)=\int_{A}f_{1}d(|\mu_{1}|+|\mu_{2}|)\text{ and }\mu_{2}(A)=\int_{A}f_{2}d(|\mu_{1}|+|\mu_{2}|).
\end{equation*}
By standard arguments
\begin{equation*}
\|\mu_{1}\pm\mu_{2}\|=\int_{G}|f_{1}\pm f_{2}|d(|\mu_{1}|+|\mu_{2}|).
\end{equation*}
The assumption implies $|f_{1}\pm f_{2}|=|f_{1}|+|f_{2}|$, $|\mu_{1}|+|\mu_{2}|$-almost everywhere forcing $f_{1}$ and $f_{2}$ to have disjoint supports.

Our aim now is to furnish an example showing that this criterion fails in the non-commutative setting.
\begin{prop}
Let $\pi: S_{3} \to \mathrm{M}_{2}$ be the standard, two-dimensional, irreducible representation of the permutation group $S_3$. Let $\varphi_1(g):= \langle e_{1}, \pi(g) e_1\rangle$ and $\varphi_{2}(g):=\langle e_2, \pi(g) e_2\rangle$, where $(e_{1}, e_{2})$ is the standard basis of $\mathbb{C}^2$ on which $S_3$ acts. Then $\|\varphi_{1}\pm \varphi_{2}\|=\|\varphi_{1}\|+ \|\varphi_{2}\|$ but $\mathrm{ zs}(\varphi_1)=\mathrm{zs}(\varphi_2)$.
\end{prop}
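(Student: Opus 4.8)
The plan is to exploit that $S_{3}$ is finite, so the whole statement reduces to a short computation in a finite-dimensional C$^{\ast}$-algebra and no genuine passage to the bidual is involved. Recall that $S_{3}$ has exactly three irreducible representations: the trivial character $\varepsilon$, the sign character $\mathrm{sgn}$, and the standard two-dimensional one $\pi$. Hence $x\mapsto(\varepsilon(x),\mathrm{sgn}(x),\pi(x))$ identifies $C^{\ast}(S_{3})$ with $\mathbb{C}\oplus\mathbb{C}\oplus\mathrm{M}_{2}$, the norm being the maximum of the three coordinate norms; and since $C^{\ast}(S_{3})$ is finite-dimensional, $W^{\ast}(S_{3})=C^{\ast}(S_{3})^{\ast\ast}=C^{\ast}(S_{3})$. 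Dualising, $\mathrm{B}(S_{3})\cong\mathbb{C}\oplus\mathbb{C}\oplus\mathrm{M}_{2}^{\ast}$, where the norm of a triple is now the \emph{sum} of the coordinate norms and the $\mathrm{M}_{2}^{\ast}$-norm is the trace-class norm under the pairing $(a,y)\mapsto\mathrm{Tr}(ay)$. Under this identification $\varphi_{1}$ is the functional $y\mapsto y_{11}$ sitting in the third coordinate, and $\varphi_{2}$ is $y\mapsto y_{22}$.

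For the central supports I would argue that the central projections of $C^{\ast}(S_{3})$ are exactly the triples $(s,t,rI_{2})$ with $s,t,r\in\{0,1\}$, and $\varphi_{i}$ vanishes on $(1-z)C^{\ast}(S_{3})$ precisely when $z\geq(0,0,I_{2})$; equivalently, each $\varphi_{i}$ is a coefficient of the irreducible representation $\pi$, so the von Neumann algebra generated by the corresponding GNS representation is all of $\mathrm{M}_{2}$, and then by the description of the central support recalled above $\mathrm{zs}(\varphi_{i})$ is the central projection supporting that block. Either way, $\mathrm{zs}(\varphi_{1})=\mathrm{zs}(\varphi_{2})=(0,0,I_{2})\neq0$: the two functionals have the same non-zero central support, so far from being mutually singular they are in fact mutually absolutely continuous.

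It remains to compute norms. Each $\varphi_{i}$ is a state, so $\|\varphi_{i}\|=\varphi_{i}(e)=1$; and $\varphi_{1}+\varphi_{2}$ is positive, hence $\|\varphi_{1}+\varphi_{2}\|=(\varphi_{1}+\varphi_{2})(e)=2$, which disposes of the ``$+$'' case automatically. For ``$-$'', take the element $x\in C^{\ast}(S_{3})$ corresponding to $0\oplus0\oplus\mathrm{diag}(1,-1)$: it has norm $1$, and $(\varphi_{1}-\varphi_{2})(x)=1-(-1)=2$ (the $(1,1)$-entry minus the $(2,2)$-entry of $\mathrm{diag}(1,-1)$), so $\|\varphi_{1}-\varphi_{2}\|\geq2$, while the triangle inequality gives $\|\varphi_{1}-\varphi_{2}\|\leq\|\varphi_{1}\|+\|\varphi_{2}\|=2$. (Conceptually, $\varphi_{1}-\varphi_{2}$ is the functional $y\mapsto\mathrm{Tr}(\mathrm{diag}(1,-1)\,y)$, of dual norm $\|\mathrm{diag}(1,-1)\|_{1}=2$.) Thus $\|\varphi_{1}\pm\varphi_{2}\|=2=\|\varphi_{1}\|+\|\varphi_{2}\|$ even though $\mathrm{zs}(\varphi_{1})=\mathrm{zs}(\varphi_{2})$. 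I do not expect any real obstacle here; the one point to get right is that the dual norm on the $\mathrm{M}_{2}$-block is the trace-class norm --- not, say, the Hilbert--Schmidt norm --- since it is exactly this that keeps the off-diagonal cancellation invisible at the level of norms and so breaks the classical equivalence between the norm identity and mutual singularity.
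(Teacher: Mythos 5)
Your proposal is correct and takes essentially the same route as the paper: both decompose $\mathbb{C}[S_3]\cong\mathbb{C}\oplus\mathbb{C}\oplus\mathrm{M}_2$ via the three irreducible representations, locate $\varphi_1,\varphi_2$ as the diagonal matrix coefficients in the $\mathrm{M}_2$ block, and observe that the only non-zero central projection of $\mathrm{M}_2$ is the identity, so the central supports coincide. The only (cosmetic) difference is in the norm identity: the paper deduces it from the orthogonality of the \emph{non-central} supports $\mathrm{s}(\varphi_1)\mathrm{s}(\varphi_2)=0$ by adapting the argument of Fact~\ref{norm}, whereas you compute directly, using positivity for the $+$ case and the norming element $0\oplus 0\oplus\mathrm{diag}(1,-1)$ (equivalently, the trace-class norm on $\mathrm{M}_2^{\ast}$) for the $-$ case --- the same mechanism made explicit.
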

\begin{proof}
Using the three irreducible representations of $S_3$, we get a decomposition of the group algebra $\mathbb{C}[S_3] = \mathbb{C} \oplus \mathbb{C} \oplus \mathrm{M}_{2}$. With this identification, the support of $\varphi_1$ is equal to the orthogonal projection onto $\mathrm{span}(e_1)$ and the support of $\varphi_2$ is equal to the orthogonal projection onto $\mathrm{span}(e_2)$; it follows that $\mathrm{s}(\varphi_1) \mathrm{s}(\varphi_2)=0$. The proof of Fact \ref{norm} also applies in this setting, because we have $\varphi_{1} = \mathrm{s}(\varphi_{1}) \varphi_{1} \mathrm{s}(\varphi_{1})$ and $\varphi_{2} = \mathrm{s}(\varphi_2) \varphi_2 {\textrm s}(\varphi_2)$, so we have $\|\varphi_1 \pm \varphi_2\| = \|\varphi_1\|+\|\varphi_2\|$. Since the only non-zero central projection in $\mathrm{M}_2$ is the identity, we get $\mathrm{zs}(\varphi_1)=\mathrm{zs}(\varphi_2)$.
\end{proof}
Since we will be dealing with two distinct notions of support, it would be good to know, what each of them is good for and when they coincide. We will address the first concern in the course of the article, presenting various results, some of which require the central support but some of them valid only for the usual support. Now we would like to recall one special case in which the support and the central support are equal.
\begin{f}
Let $A$ be a $C^{\ast}$-algebra. If a functional $\varphi\in A^{\ast}_{+}$ is \textbf{tracial}, i.e. $\varphi(xy)=\varphi(yx)$ for $x,y\in A$, then $ \mathrm{s}(\varphi)= \mathrm{zs}(\varphi)$.
\end{f}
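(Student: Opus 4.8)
The plan is to lift the traciality of $\varphi$ to the enveloping von Neumann algebra $\mathsf{M}=A^{\ast\ast}$ and then use that the support of a normal tracial positive functional on a von Neumann algebra is automatically a central projection. First I would check that traciality persists: viewing $\varphi\in A^{\ast}=\mathsf{M}_{\ast}$ as a normal functional on $\mathsf{M}$, I claim $\varphi(ab)=\varphi(ba)$ for all $a,b\in\mathsf{M}$. Fix $b\in A$; the functionals $x\mapsto\varphi(xb)$ and $x\mapsto\varphi(bx)$ are weak$^{\ast}$-continuous on $\mathsf{M}$ (left and right multiplication by a fixed element is weak$^{\ast}$-continuous and $\varphi$ is normal) and they agree on the weak$^{\ast}$-dense subalgebra $A$, hence on all of $\mathsf{M}$. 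Now fix $a\in\mathsf{M}$ and run the same argument for $y\mapsto\varphi(ay)$ and $y\mapsto\varphi(ya)$, which coincide on $A$ by the previous step; this yields traciality on all of $\mathsf{M}$.

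Next I would prove that $e:=\mathrm{s}(\varphi)$ is central. For a unitary $u\in\mathsf{M}$, traciality in $\mathsf{M}$ gives $(u\varphi u^{\ast})(y)=\varphi(u^{\ast}yu)=\varphi(yuu^{\ast})=\varphi(y)$, so $u\varphi u^{\ast}=\varphi$. On the other hand, conjugation by $u$ is a $\ast$-automorphism of $\mathsf{M}$, and a routine check shows that it carries the defining relations of the support (namely $e\varphi e=\varphi$ together with minimality of $e$) over to the corresponding relations for $u\varphi u^{\ast}$, whence $\mathrm{s}(u\varphi u^{\ast})=ueu^{\ast}$. Combining the two observations, $ueu^{\ast}=e$ for every unitary $u\in\mathsf{M}$; since $\mathsf{M}$ is spanned by its unitaries, $e$ commutes with all of $\mathsf{M}$ and is therefore central.

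Finally, one recalls that for a central projection $z$ the functional $\varphi$ vanishes on $(1-z)\mathsf{M}$ if and only if $z\geq\mathrm{s}(\varphi)$ (an easy computation using that $z$ is central and $e\varphi e=\varphi$), so $\mathrm{zs}(\varphi)$ is nothing but the smallest central projection dominating $\mathrm{s}(\varphi)$. Since $\mathrm{s}(\varphi)$ is already central by the previous step, it is its own central cover, i.e. $\mathrm{s}(\varphi)=\mathrm{zs}(\varphi)$. The only point that needs a little care is the separate-weak$^{\ast}$-continuity bookkeeping in the first step, but this is entirely routine; the rest is immediate. (Alternatively, one can argue through the GNS construction: for a tracial state a cyclic vector is automatically separating, so the support of the associated vector functional on $\mathrm{vN}(\varphi)$ is the identity, which is exactly the assertion.)
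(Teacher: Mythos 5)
Your proof is correct and follows essentially the same route as the paper: unitary invariance of a tracial positive functional forces its support to be fixed under conjugation by unitaries, hence central. The only difference is bookkeeping --- you first lift traciality to $A^{\ast\ast}$ and conjugate by unitaries there, whereas the paper conjugates by unitaries of $A$ and invokes weak$^{\ast}$-density of $A$ in $A^{\ast\ast}$ at the end; your closing GNS remark likewise matches the paper's characterisation of GNS faithfulness.
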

\begin{proof}
If $u \in A$ is a unitary then $\varphi(u(xu^{\ast})) = \varphi(xu^{\ast}u) = \varphi(x)$, therefore $\varphi$ is unitarily invariant. It follows that $u\mathrm{s}(\varphi)u^{\ast}=\mathrm{s}(\varphi)$, i.e. $u\mathrm{s}(\varphi) = \mathrm{s}(\varphi)u$. Since unitaries span $A$, we get that $x \mathrm{s}(\varphi) = \mathrm{s}(\varphi)x$ for any $x\in A$. From density of $A$ in $A^{\ast\ast}$ we conclude that $\mathrm{s}(\varphi)$ belongs to the center of $A^{\ast\ast}$, so $\mathrm{zs}(\varphi)=\mathrm{s}(\varphi)$.
\end{proof}
We will now state a criterion for equality of regular and central supports and then use it to show that any positive functional can be approximated in norm by ones whose supports are central.
\begin{f}
Let $A$ be a $C^{\ast}$-algebra and let $\varphi \in A^{\ast}_{+}$. Then $\mathrm{s}(\varphi)=\mathrm{ zs}(\varphi)$ if and only the positive functional induced by $\varphi$ on $\mathrm{vN}(\varphi)$, the von Neumann algebra generated by the image of the GNS representation of $A$, is faithful.
\end{f}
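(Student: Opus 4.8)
The plan is to deduce everything from the identification $\mathrm{vN}(\varphi)\simeq\mathrm{zs}(\varphi)\mathsf{M}$ recalled just before the definition of absolute continuity. Let $(\pi,\mathsf{H},\Omega)$ be the GNS triple of $\varphi$ and let $\overline{\pi}\colon\mathsf{M}\to\mathrm{B}(\mathsf{H})$ be its normal extension, so that $\mathrm{vN}(\varphi)=\overline{\pi}(\mathsf{M})$ and $\ker\overline{\pi}=(1-\mathrm{zs}(\varphi))\mathsf{M}$. Then $\overline{\pi}$ restricts to a normal $^{\ast}$-isomorphism from $\mathrm{zs}(\varphi)\mathsf{M}$ onto $\mathrm{vN}(\varphi)$, carrying the unit $\mathrm{zs}(\varphi)$ of the corner to the unit of $\mathrm{vN}(\varphi)$. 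Moreover, viewing $\varphi$ as the normal functional on $\mathsf{M}=A^{\ast\ast}$ that it canonically is, uniqueness of normal extensions gives $\varphi(x)=\langle\Omega,\overline{\pi}(x)\Omega\rangle=\overline{\varphi}(\overline{\pi}(x))$ for all $x\in\mathsf{M}$, where $\overline{\varphi}$ is the induced (normal, positive) functional on $\mathrm{vN}(\varphi)$. Hence, under the above isomorphism, $\overline{\varphi}$ corresponds exactly to the restriction of $\varphi$ to $\mathrm{zs}(\varphi)\mathsf{M}$, which determines $\varphi$ since $\varphi$ vanishes on $(1-\mathrm{zs}(\varphi))\mathsf{M}$.

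Next I would record the elementary fact that a normal positive functional $\psi$ on a von Neumann algebra $N$ is faithful if and only if its support projection equals the unit of $N$: by the spectral theorem faithfulness need only be tested on projections, and $\psi(p)=0$ for a projection $p$ precisely when $p\le 1_{N}-\mathrm{s}(\psi)$, so $\psi$ has no non-zero null projection iff $\mathrm{s}(\psi)=1_{N}$. Applying this to $\psi=\overline{\varphi}$ on $N=\mathrm{vN}(\varphi)$, we see that $\overline{\varphi}$ is faithful iff $\mathrm{s}(\overline{\varphi})$ is the unit of $\mathrm{vN}(\varphi)$, i.e. iff the support of $\varphi|_{\mathrm{zs}(\varphi)\mathsf{M}}$, computed inside the corner $\mathrm{zs}(\varphi)\mathsf{M}$, equals $\mathrm{zs}(\varphi)$.

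The last point is to identify the support of $\varphi$ computed in the corner with $\mathrm{s}(\varphi)$, the support computed in all of $\mathsf{M}$. This is immediate from $\mathrm{s}(\varphi)\le\mathrm{zs}(\varphi)$: for a projection $p\le\mathrm{zs}(\varphi)$ one has $\varphi(p)=0$ iff $p\le 1-\mathrm{s}(\varphi)$, and combining with $p\le\mathrm{zs}(\varphi)$ this says $p\le\mathrm{zs}(\varphi)-\mathrm{s}(\varphi)$ (the meet, using that $\mathrm{zs}(\varphi)$ is central and dominates $\mathrm{s}(\varphi)$). Thus the largest null projection of $\varphi|_{\mathrm{zs}(\varphi)\mathsf{M}}$ is $\mathrm{zs}(\varphi)-\mathrm{s}(\varphi)$ and its support is $\mathrm{s}(\varphi)$. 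Chaining the three observations: $\overline{\varphi}$ is faithful on $\mathrm{vN}(\varphi)$ if and only if $\mathrm{s}(\varphi)=\mathrm{zs}(\varphi)$.

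I do not expect a genuine obstacle here; the proof is a bookkeeping exercise on supports. The only points demanding care are making sure that ``support'' is always computed in the correct von Neumann algebra (in $\mathsf{M}$ versus in the central corner $\mathrm{zs}(\varphi)\mathsf{M}$, which is harmless because $\mathrm{s}(\varphi)$ already lies in that corner), and that the functional $\overline{\varphi}$ on $\mathrm{vN}(\varphi)$ is genuinely intertwined by $\overline{\pi}$ with the normal functional $\varphi$ on $\mathsf{M}$ — which follows from the standard dictionary between kernels of normal representations and central projections that the paper has already invoked.
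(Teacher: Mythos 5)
Your proof is correct and follows essentially the same route as the paper's: identify $\mathrm{vN}(\varphi)$ with the central corner $\mathrm{zs}(\varphi)A^{\ast\ast}$, and observe that $\mathrm{s}(\varphi)$ is precisely the largest projection whose corner sees $\varphi$ faithfully. You simply spell out the bookkeeping (faithfulness tested on projections, support computed in the corner versus in $A^{\ast\ast}$) that the paper leaves implicit.
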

\begin{proof}
By definition of $\mathrm{zs}(\varphi)$ we may write $\mathrm{vN}(\varphi)\simeq \mathrm{zs}(\varphi)A^{\ast\ast}$. The support $\mathrm{s}(\varphi)$ is the biggest projection such that the restriction of $\varphi$ to $\mathrm{s}(\varphi)A^{\ast\ast} \mathrm{s}(\varphi)$ is faithful. Therefore $\mathrm{s}(\varphi)=\mathrm{zs}(\varphi)$ if and only if $\varphi$ is faithful on $\mathrm{zs}(\varphi)A^{\ast\ast}\simeq\mathrm{vN}(\varphi)$.
\end{proof}
Prompted by this fact, we will call such positive functionals \textbf{GNS faithful}.
\begin{prop}\label{Prop:density}
Let $\varphi$ be a positive functional on a separable $C^{\ast}$-algebra $A$. Then for any $\varepsilon > 0$ there exists a GNS faithful $\psi \in A^{\ast}_{+}$ such that $\mathrm{zs}(\varphi) = \mathrm{zs}(\psi)$ and $\|\varphi-\psi\|\leqslant \varepsilon$.
\end{prop}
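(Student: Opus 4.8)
The plan is to perturb $\varphi$ by an arbitrarily small multiple of a \emph{faithful} normal state carried by $\mathrm{vN}(\varphi)$. Recall from the discussion preceding the proposition that $\mathrm{vN}(\varphi)$ is canonically identified with the central summand $M:=\mathrm{zs}(\varphi)A^{\ast\ast}$ of $A^{\ast\ast}$, and that a positive functional is GNS faithful exactly when the functional it induces on its own $\mathrm{vN}$-algebra is faithful. So, granting a faithful normal state $\rho$ on $M$, define $\widetilde{\rho}\in A^{\ast}_{+}$ by $\widetilde{\rho}(x):=\rho(\mathrm{zs}(\varphi)x)$ and, for $0<\delta\leqslant 1$, set $\psi:=(1-\delta)\varphi+\delta\widetilde{\rho}$; this is again a positive functional on $A$, and it is the candidate.

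It then remains to check the three requirements, all of which are routine. The norm estimate: since $\|\widetilde{\rho}\|=\|\rho\|=1$ we get $\|\varphi-\psi\|=\delta\,\|\varphi-\widetilde{\rho}\|\leqslant\delta(\|\varphi\|+1)$, which is $\leqslant\varepsilon$ once $\delta\leqslant\varepsilon/(\|\varphi\|+1)$. The central support: $\widetilde{\rho}$ vanishes on $(1-\mathrm{zs}(\varphi))A^{\ast\ast}$, so $\mathrm{zs}(\widetilde{\rho})\leqslant\mathrm{zs}(\varphi)$, and for positive functionals one has $\mathrm{zs}(\varphi_{1}+\varphi_{2})=\mathrm{zs}(\varphi_{1})\vee\mathrm{zs}(\varphi_{2})$ (both sides being the smallest central $z$ on which $(1-z)A^{\ast\ast}$ is annihilated by $\varphi_{1}$ and $\varphi_{2}$), whence $\mathrm{zs}(\psi)=\mathrm{zs}(\varphi)\vee\mathrm{zs}(\widetilde{\rho})=\mathrm{zs}(\varphi)$. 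GNS faithfulness: from $\mathrm{zs}(\psi)=\mathrm{zs}(\varphi)$ we get $\mathrm{vN}(\psi)\cong \mathrm{zs}(\varphi)A^{\ast\ast}=M$, and the positive functional that $\psi$ induces on $M$ dominates $\delta\rho$, hence is faithful; by the Fact recalled above this means $\mathrm{s}(\psi)=\mathrm{zs}(\psi)$, i.e. $\psi$ is GNS faithful.

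The only real content is thus the existence of a faithful normal state on $M=\mathrm{vN}(\varphi)$, equivalently the countable decomposability of this von Neumann algebra, and this is the step I expect to require care. The GNS construction already supplies a cyclic vector $\Omega_{\varphi}$ for $M$, and when $A$ is separable the GNS Hilbert space $\mathsf{H}_{\varphi}$ is separable, so $M$ cannot contain an uncountable orthogonal family of non-zero projections; it is therefore $\sigma$-finite and admits a faithful normal state. (One can also build $\rho$ explicitly out of $\varphi$: since $\mathrm{s}(\varphi)$ has central support $1_{M}$, the comparison theorem lets one write $1_{M}=\sum_{i\in I}p_{i}$ with each $p_{i}=v_{i}v_{i}^{\ast}$ for a partial isometry $v_{i}$ satisfying $v_{i}^{\ast}v_{i}\leqslant \mathrm{s}(\varphi)$, and then $\rho(x):=\sum_{i\in I}c_{i}\,\varphi(v_{i}^{\ast}xv_{i})$ is a faithful normal functional for any choice of summable weights $c_{i}>0$ — an argument that again uses, and is essentially equivalent to, $I$ being countable.) Once $\rho$ has been produced the remainder is, as indicated above, pure bookkeeping with central supports and the Lebesgue-type decomposition already developed in this subsection.
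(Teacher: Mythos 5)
Your proof is correct and follows essentially the same strategy as the paper's: perturb $\varphi$ by a small positive functional, supported under $\mathrm{zs}(\varphi)$, that is faithful on $\mathrm{vN}(\varphi)$ --- the paper takes $\sum_{i=1}^{\infty} y_i^{\ast}\varphi y_i$ with $\{y_i\Omega\}_{i=1}^{\infty}$ total in the GNS space, which is precisely your parenthetical explicit construction of the faithful normal state $\rho$. The separability caveat you flag is genuine but is equally present (and left implicit, via ``which we can do'') in the paper's argument, since a countable total family $\{y_i\Omega\}$ exists only when the GNS Hilbert space is separable.
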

\begin{proof}
Perform a GNS representation with respect to $\varphi$. We get that $\varphi(x) = \langle \Omega, x \Omega\rangle$ for any $x\in \mathrm{vN}(\varphi)$ and $\Omega \in \mathsf{H}$ is a cyclic vector for $\mathrm{vN}(\varphi)$. We will take $\psi:= \varphi+\sum_{i=1}^{\infty} y_i^{\ast} \varphi y_i$, where the elements $y_i \in \mathrm{vN}(\varphi)$ are chosen appropriately. Recall that $(y_i^{\ast}\varphi y_i)(x) = \langle y_i \Omega, x y_i \Omega\rangle$. Therefore if we ensure that the set of vectors $\{y_i\Omega\}_{i=1}^{\infty}$ is total in $\mathsf{H}$, which we can do, we will get a GNS faithful functional $\psi$. Note that $\|\psi-\varphi\| \leqslant \sum_{i=1}^{\infty}\|\varphi\| \cdot \|y_i\|^2$, so by rescaling $y_i$'s we may assume that $\|\psi-\varphi\|\leqslant \varepsilon$. Clearly, we have $\mathrm{zs}(\varphi) = \mathrm{zs}(\psi)$.
\end{proof}
We will be interested in transferring the classical notions, such as the notion of a singular measure, to our setting and it will be convenient to introduce the following definition.
\begin{de}
A closed subspace $X \subset \mathrm{B}(G)$ is called an \textbf{$L$-space} if the conditions $f \in X$ and $g\ll f$ imply $g\in X$. An $L$-space is called an $L$-algebra ($L$-ideal) if it is additionally a subalgebra (an ideal) of $\mathrm{B}(G)$.
\end{de}
We will now present equivalent conditions for a closed subspace of the Fourier--Stieltjes algebra to be an $L$-space (the proof can also be found in \cite{ftp}, Chapter 1, Lemma 2.1).
\begin{prop}\label{Prop:Lspaces}
Let $X \subset \mathrm{B}(G)$ be a closed subspace. The following conditions are equivalent:
\begin{enumerate}[{\normalfont (i)}]
\item\label{Lspaceone} $X$ is an $L$-space;
\item\label{Lspacetwo} $X$ is $G$-bi-invariant, i.e. $\mathbb{C}[G]\cdot X \subset X$ and $X\cdot \mathbb{C}[G] \subset X$;
\item\label{Lspacethree} There exists a central projection $z \in W^{\ast}(G)$ such that $X = z \mathrm{B}(G)$.
\end{enumerate}
\end{prop}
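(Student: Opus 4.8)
The plan is to prove $(\ref{Lspaceone}) \Leftrightarrow (\ref{Lspacetwo})$ directly from the translation-approximation machinery already developed, and then to establish $(\ref{Lspacetwo}) \Leftrightarrow (\ref{Lspacethree})$ by passing to annihilators in the bidual $W^{\ast}(G)$. The implication $(\ref{Lspaceone}) \Rightarrow (\ref{Lspacetwo})$ is immediate from the Lemma preceding Proposition \ref{Prop:translation}: for $f \in X$ and $g \in \mathbb{C}[G] \subset W^{\ast}(G)$ we have $g \cdot f \ll f$ and $f \cdot g \ll f$, so both lie in $X$ once $X$ is an $L$-space. For the converse, assuming $X$ closed and $G$-bi-invariant, take $f \in X$ and $g \ll f$; by Proposition \ref{Prop:translation} applied with $\mathcal{A} = \mathbb{C}[G]$, the functional $g$ is a norm-limit of linear combinations of $x \cdot f \cdot y$ with $x, y \in \mathbb{C}[G]$, and each such term lies in $X$ since $f \cdot y \in X$ by right-invariance and then $x \cdot (f\cdot y) \in X$ by left-invariance; as $X$ is norm-closed, $g \in X$.

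For $(\ref{Lspacetwo}) \Rightarrow (\ref{Lspacethree})$ I would consider the annihilator $X^{\perp} = \{a \in W^{\ast}(G) : \langle f, a\rangle = 0 \text{ for all } f \in X\}$, a weak$^{\ast}$-closed subspace of $W^{\ast}(G) = \mathrm{B}(G)^{\ast}$. Unwinding the bimodule conventions $(a\varphi)(y) = \varphi(ya)$, $(\varphi a)(y) = \varphi(ay)$, one has the identities $\langle f, ga\rangle = \langle f \cdot g, a\rangle$ and $\langle f, ag\rangle = \langle g\cdot f, a\rangle$ for $g \in \mathbb{C}[G]$, so the $G$-bi-invariance of $X$ translates exactly into $X^{\perp}$ being a two-sided $\mathbb{C}[G]$-submodule of $W^{\ast}(G)$. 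Since $\mathbb{C}[G]$ is weak$^{\ast}$-dense in $W^{\ast}(G)$, multiplication is separately weak$^{\ast}$-continuous, and $X^{\perp}$ is weak$^{\ast}$-closed, fixing one factor and taking weak$^{\ast}$-limits of the other upgrades this to: $X^{\perp}$ is a weak$^{\ast}$-closed two-sided ideal of $W^{\ast}(G)$. By the standard structure theory of von Neumann algebras such an ideal equals $z' W^{\ast}(G)$ for a central projection $z'$. Setting $z = 1 - z'$ and invoking the bipolar theorem for the dual pairing $(C^{\ast}(G)^{\ast}, C^{\ast}(G)^{\ast\ast})$ — norm-closed subspaces of $\mathrm{B}(G)$ coincide with the pre-annihilator of their annihilator — we obtain $X = {}^{\perp}(X^{\perp}) = \{\varphi \in \mathrm{B}(G) : \varphi \cdot z' = 0\} = \mathrm{B}(G)\cdot z = z\,\mathrm{B}(G)$, where the last equality uses centrality of $z$.

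Finally, $(\ref{Lspacethree}) \Rightarrow (\ref{Lspacetwo})$ is a short direct check: if $z$ is central then $\varphi \mapsto z\varphi$ is a contractive idempotent on $\mathrm{B}(G)$, so $z\,\mathrm{B}(G)$ is automatically norm-closed, and for $g \in \mathbb{C}[G]$, $\varphi \in \mathrm{B}(G)$ centrality yields $g \cdot (z\varphi) = z\cdot(g\varphi)$ and $(z\varphi)\cdot g = z\cdot(\varphi g)$, both in $z\,\mathrm{B}(G)$. The only genuinely delicate point in the whole proof is bookkeeping: one must stay consistent about the two roles of $\mathrm{B}(G)$ — as $C^{\ast}(G)^{\ast}$, where the bipolar theorem is applied, and as $W^{\ast}(G)_{\ast}$, where the bimodule structure lives — and about which side of the pairing the $\mathbb{C}[G]$-action falls on; once those conventions are pinned down, $(\ref{Lspacetwo}) \Rightarrow (\ref{Lspacethree})$ is the substantive step and everything else is routine.
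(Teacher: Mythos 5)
Your proposal is correct and follows essentially the same route as the paper's: both obtain \eqref{Lspaceone} $\Leftrightarrow$ \eqref{Lspacetwo} from Proposition \ref{Prop:translation}, and both reduce \eqref{Lspacetwo} $\Rightarrow$ \eqref{Lspacethree} to the fact that a weak$^{\ast}$-closed two-sided ideal of $W^{\ast}(G)$ has the form $(1-z)W^{\ast}(G)$ for a central projection $z$, followed by the bipolar/pre-annihilator duality. The only cosmetic difference is that you upgrade the $\mathbb{C}[G]$-bimodule structure to a $W^{\ast}(G)$-bimodule structure on the annihilator $X^{\perp}$ via separate weak$^{\ast}$-continuity of multiplication, whereas the paper performs the same upgrade on $X$ itself (using that the norm-closed subspace $X$ is weakly closed) before passing to $X^{\perp}$; these are dual formulations of the same step.
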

\begin{proof}
Equivalence of \eqref{Lspaceone} and \eqref{Lspacetwo} follows from Proposition \ref{Prop:translation}. Clearly \eqref{Lspacethree} implies both \eqref{Lspaceone} and \eqref{Lspacetwo}, so it suffices to prove that \eqref{Lspacetwo} implies \eqref{Lspacethree} and this is what we will do now. We plan to show that for any $y\in W^{\ast}(G)$ we have $yX\subset X$ and $Xy \subset X$; then, as we shall see, the existence of $z$ will follow from general theory of von Neumann algebras. By assumption, we have this property for $y \in \mathbb{C}[G]$. Any element $y \in W^{\ast}(G)$ can be approximated by elements of the group algebra in weak$^{\ast}$-topology. Let $(y_i)_{i\in I} \subset \mathbb{C}[G]$ be a net such that $\lim_{i\in I} y_i=y$. For any $f\in X$ the net $(y_{i}f)_{i\in I}$ converges to $yf$ in weak topology, so $yf \in X$, because $X$ is weakly closed, as it is norm-closed. The same holds for $fy$, so $X$ is invariant by left and right actions of $W^{\ast}(G)$. The annihilator $X^{\perp} \subset W^{\ast}(G)$ is therefore a two-sided, weak$^{\ast}$-closed ideal, hence it is of the form $(1-z)W^{\ast}(G)$ for some central projection $z \in W^{\ast}(G)$. It now follows from standard duality of Banach spaces that $X= z\mathrm{B}(G)$.
\end{proof}
\begin{cor}\label{ortwn}
An $L$-space $X$ is \textbf{orthogonally} complemented in $\mathrm{B}(G)$, i.e. there is a complement $X^{\perp}$ such that any $f\in X^{\perp}$ satisfies $f \bot X$.
\end{cor}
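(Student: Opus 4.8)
The plan is to use the structural description established in Proposition \ref{Prop:Lspaces}: since $X$ is an $L$-space, there is a central projection $z \in W^{\ast}(G)$ with $X = z\mathrm{B}(G)$. The natural candidate for the complement is $X^{\perp} := (1-z)\mathrm{B}(G)$, and the whole proof reduces to checking that this choice works — both that it is an algebraic complement and that its elements are singular with respect to $X$.

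First I would verify that $\mathrm{B}(G) = z\mathrm{B}(G) \oplus (1-z)\mathrm{B}(G)$. Every $\varphi \in \mathrm{B}(G) = W^{\ast}(G)_{\ast}$ splits as $\varphi = z\varphi + (1-z)\varphi$, and both summands again lie in $\mathrm{B}(G)$ because $\mathrm{B}(G)$ is an $W^{\ast}(G)$-$W^{\ast}(G)$-bimodule; the two summands have trivial intersection, since if $z\varphi = (1-z)\psi$ then left multiplication by $z$ together with centrality gives $z\varphi = z(1-z)\psi = 0$. (One also notes that $X^{\perp} = (1-z)\mathrm{B}(G)$ is itself an $L$-space by Proposition \ref{Prop:Lspaces}, though this is not needed for the statement.)

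The key point is that every $f \in X^{\perp} = (1-z)\mathrm{B}(G)$ is mutually singular with every $g \in X = z\mathrm{B}(G)$. To get $f \bot g$ it suffices to show $\mathrm{zs}(f) \leq 1-z$ and $\mathrm{zs}(g) \leq z$, for then $\mathrm{zs}(f)\,\mathrm{zs}(g) \leq (1-z)z = 0$. For the first inequality, write $f = (1-z)f$; then for any $y \in zW^{\ast}(G)$ we have $y = zy$, hence $y(1-z) = zy(1-z) = (1-z)zy = 0$ by centrality of $z$, so $f(y) = \bigl((1-z)f\bigr)(y) = f(y(1-z)) = 0$. Thus $f$ vanishes on $zW^{\ast}(G) = (1-(1-z))W^{\ast}(G)$, and minimality in the definition of the central support forces $\mathrm{zs}(f) \leq 1-z$. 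The inequality $\mathrm{zs}(g) \leq z$ follows the same way from $g = zg$. Since $g \in X$ was arbitrary, this gives $f \bot X$.

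I do not expect any real obstacle here: once Proposition \ref{Prop:Lspaces} is in hand, the corollary is a routine unwinding of the definitions of central support and mutual singularity. The only mild care needed is the left/right bookkeeping in the bimodule action on $W^{\ast}(G)_{\ast}$, but the centrality of $z$ makes the two actions interchangeable, so even that causes no trouble.
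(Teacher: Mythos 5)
Your proof is correct and is exactly the unwinding the paper intends: the corollary is stated as an immediate consequence of Proposition \ref{Prop:Lspaces}, with $X^{\perp}=(1-z)\mathrm{B}(G)$ as the complement and the singularity following from $\mathrm{zs}(f)\leq 1-z$ and $\mathrm{zs}(g)\leq z$ for central $z$. Your bookkeeping with the bimodule action and the minimality in the definition of the central support is accurate, so there is nothing to add.
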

Important examples of $L$-spaces are $\mathrm{A}(G)$, the \textbf{Fourier algebra} of the group $G$, $\mathrm{B}_{0}(G)$ and the Zafran ideal; they are actually $L$-ideals. Note here that it follows that the Zafran ideal is complemented -- this was not recognised even in the commutative setting. A nice example of an $L$-subalgebra is $\mathrm{B}(G) \cap \mathrm{AP}(G)$. The fact that all of these examples are $L$-spaces follows immediately from Proposition \ref{Prop:Lspaces} because they certainly are $G$-bi-invariant.

We introduce the definition generalising the notion of a singular measure for compact Abelian groups.
\begin{de}
Let $G$ be a discrete group. An element $f\in \mathrm{B}(G)$ is called \textbf{singular} if $f\bot\delta_{e}$ where $e$ is the identity element of $G$.
\end{de}
Using the fact that $\mathrm{A}(G)$ is a closed $G$-invariant subspace of $\mathrm{B}(G)$ generated by $\delta_e$, we get the following theorem by Corollary \ref{ortwn}.
\begin{thm}\label{ort}
Let $G$ be a discrete group. The following holds true:
\begin{enumerate}[{\normalfont (i)}]
  \item If $h\in \mathrm{B}(G)$ is a singular element then $h\bot \mathrm{A}(G)$.
  \item There exists a closed linear subspace $B_{s}(G)$ of $\mathrm{B}(G)$ such that $\mathrm{B}(G)=B_{s}(G)\oplus \mathrm{A}(G)$. Moreover, this decomposition is orthogonal in the following sense: if $f\in \mathrm{B}(G)$, $f=f_{1}+f_{2}$, $f_{1}\in B_{s}(G)$ and $f_{2}\in \mathrm{A}(G)$ then $f_{1}\bot f_{2}$ and $\|f\|=\|f_{1}\|+\|f_{2}\|$.
\end{enumerate}
\end{thm}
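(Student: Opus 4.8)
The plan is to deduce the whole statement from three facts already in hand: that $\mathrm{A}(G)$ is an $L$-space, Corollary \ref{ortwn} on orthogonal complementation of $L$-spaces, and Fact \ref{norm} on norm-additivity of mutually singular functionals. Since $\mathrm{A}(G)$ is a closed ideal it is in particular $G$-bi-invariant, so by Proposition \ref{Prop:Lspaces} I would write $\mathrm{A}(G) = z\mathrm{B}(G)$ for a central projection $z \in W^{\ast}(G)$, with complementary subspace $(1-z)\mathrm{B}(G)$. The preliminary point to nail down is that this $z$ is exactly $\mathrm{zs}(\delta_{e})$: $\delta_{e} \in \mathrm{A}(G) = z\mathrm{B}(G)$ gives $\mathrm{zs}(\delta_{e}) \leq z$, while $\mathrm{zs}(\delta_{e})\mathrm{B}(G)$ is itself an $L$-space containing $\delta_{e}$ and $\mathrm{A}(G)$ is the smallest such (being generated by $\delta_{e}$), giving $z \leq \mathrm{zs}(\delta_{e})$. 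Equivalently, and this is all that part (i) really needs, every $f \in \mathrm{A}(G)$ satisfies $f \ll \delta_{e}$, which by Proposition \ref{Prop:translation} follows from $f$ being a norm-limit of linear combinations of two-sided translates of $\delta_{e}$.

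For part (i) I would then argue purely with central supports: if $h$ is singular, i.e.\ $h \bot \delta_{e}$, then $\mathrm{zs}(h)\,\mathrm{zs}(\delta_{e}) = 0$, and for any $f \in \mathrm{A}(G)$ we have $\mathrm{zs}(f) \leq \mathrm{zs}(\delta_{e})$, so $\mathrm{zs}(h)\,\mathrm{zs}(f) \leq \mathrm{zs}(h)\,\mathrm{zs}(\delta_{e}) = 0$, which is exactly $h \bot f$; hence $h \bot \mathrm{A}(G)$.

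For part (ii) I would set $B_{s}(G) := (1-z)\mathrm{B}(G)$, the complement supplied by Corollary \ref{ortwn}. It is norm-closed, and since $z$ and $1-z$ are complementary central projections we get $\mathrm{B}(G) = \mathrm{A}(G) \oplus B_{s}(G)$ with $\mathrm{A}(G) \cap B_{s}(G) = \{0\}$, so any decomposition $f = f_{1} + f_{2}$ with $f_{1} \in B_{s}(G)$, $f_{2} \in \mathrm{A}(G)$ is forced to be $f_{1} = (1-z)f$, $f_{2} = zf$. Then $\mathrm{zs}(f_{1}) \leq 1-z$ and $\mathrm{zs}(f_{2}) \leq z$, so $\mathrm{zs}(f_{1})\,\mathrm{zs}(f_{2}) = 0$, i.e.\ $f_{1} \bot f_{2}$, and the norm identity $\|f\| = \|f_{1}\| + \|f_{2}\|$ then drops out of Fact \ref{norm}.

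I do not expect a real obstacle here: the heavy lifting already sits in Proposition \ref{Prop:Lspaces} and Corollary \ref{ortwn}, and the rest is bookkeeping with central projections. The one spot deserving a line of care is the identification of the central projection attached to $\mathrm{A}(G)$ with $\mathrm{zs}(\delta_{e})$ — equivalently, the fact that $\mathrm{A}(G)$ is the $L$-space \emph{generated} by $\delta_{e}$ and not merely one that contains it — since this is precisely what translates the abstract definition of a singular element into the concrete splitting $\mathrm{B}(G) = \mathrm{A}(G) \oplus B_{s}(G)$.
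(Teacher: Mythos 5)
Your proposal is correct and follows essentially the same route as the paper, which simply observes that $\mathrm{A}(G)$ is the closed $G$-bi-invariant subspace of $\mathrm{B}(G)$ generated by $\delta_{e}$ and then invokes Corollary \ref{ortwn}. The one point you flag as needing care --- that the central projection attached to $\mathrm{A}(G)$ is exactly $\mathrm{zs}(\delta_{e})$, via density of $c_{00}(G)$ (the linear span of two-sided translates of $\delta_{e}$) in $\mathrm{A}(G)$ together with Proposition \ref{Prop:translation} --- is precisely the fact the paper appeals to, and your verification of it is sound.
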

The second assertion is known and was proved by G. Arsac in \cite{ar}. Some characterisation of elements from $B_{s}(G)$ and $\mathrm{A}(G)$ was given by T. Miao in \cite{mi} refined by E. Kaniuth, A. Lau and G. Schlichting in \cite{kls}.
\subsection{Further results on \texorpdfstring{$\mathrm{B}_{0}(G)\cap \mathrm{NS}(G)$}{the intersection of B0(G) with \mathrm{NS}(G)}}
Equipped with all the tools needed we will prove the criterion for non-naturality of the spectrum (see Theorem 3.6 in \cite{Zafran}).
\begin{thm}\label{nz}
Let $f\in \mathrm{B}_{0}(G)\setminus\{0\}$ be such that $f^{n}$ is singular for each $n\in\mathbb{N}$. Then $f\notin \mathrm{B}_{0}(G)\cap \mathrm{NS}(G)$.
\end{thm}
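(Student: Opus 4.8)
The plan is to show directly that $f$ cannot lie in $\mathrm{B}_{0}(G)\cap\mathrm{NS}(G)$ by producing a multiplicative--linear functional $\varphi\in\triangle(\mathrm{B}(G))\setminus G$ with $\varphi(f)\neq 0$; Theorem~\ref{glz}\,\eqref{zafone} then forbids $f\in\mathrm{B}_{0}(G)\cap\mathrm{NS}(G)$, which is the assertion, since $f\in\mathrm{B}_{0}(G)$ by hypothesis. First note that a nonzero singular element can only exist when $G$ is infinite: on a finite group $\delta_{e}$ is a faithful trace, so its central support is $\mathbf 1$ and $0$ is the only singular element. Hence we may assume $G$ infinite; in particular $\mathrm{A}(G)\subset c_{0}(G)$ is a \emph{proper} closed ideal of $\mathrm{B}(G)$ and $\mathrm{B}(G)/\mathrm{A}(G)$ is a commutative unital Banach algebra, with unit $[\mathbf 1_{G}]\neq 0$.

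The heart of the matter is the computation of the spectral radius of the coset $[f]$ in $\mathrm{B}(G)/\mathrm{A}(G)$. For each $n\in\mathbb{N}$ the element $f^{n}$ is singular, so by Theorem~\ref{ort}(i) we have $f^{n}\bot a$ for every $a\in\mathrm{A}(G)$, and Fact~\ref{norm} then gives $\|f^{n}-a\|=\|f^{n}\|+\|a\|\geq\|f^{n}\|$; consequently $\mathrm{dist}(f^{n},\mathrm{A}(G))=\|f^{n}\|$, i.e. $\|[f]^{n}\|_{\mathrm{B}(G)/\mathrm{A}(G)}=\|f^{n}\|_{\mathrm{B}(G)}$ for all $n$. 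Applying the spectral radius formula $r(x)=\lim_{n}\|x^{n}\|^{1/n}$ in both algebras,
\[
r_{\mathrm{B}(G)/\mathrm{A}(G)}([f])=\lim_{n\to\infty}\|[f]^{n}\|^{1/n}=\lim_{n\to\infty}\|f^{n}\|^{1/n}=r_{\mathrm{B}(G)}(f)\geq\sup_{x\in G}|f(x)|>0,
\]
the last two inequalities because $f(G)\subset\sigma_{\mathrm{B}(G)}(f)$ and $f\neq 0$.

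Therefore $[f]$ does not lie in the Jacobson radical of the commutative Banach algebra $\mathrm{B}(G)/\mathrm{A}(G)$, so there is $\psi\in\triangle(\mathrm{B}(G)/\mathrm{A}(G))$ with $\psi([f])\neq 0$. Setting $\varphi:=\psi\circ\pi$, where $\pi\colon\mathrm{B}(G)\to\mathrm{B}(G)/\mathrm{A}(G)$ is the quotient homomorphism, we get $\varphi\in\triangle(\mathrm{B}(G))$ with $\varphi|_{\mathrm{A}(G)}=0$ and $\varphi(f)=\psi([f])\neq 0$. Since $\varphi$ annihilates $\mathrm{A}(G)$ it lies in $h(\mathrm{A}(G))$, which (as recalled in the proof of Fact~\ref{zawpod}) coincides with $\triangle(\mathrm{B}(G))\setminus G$: an evaluation $\varphi_{x}$ at a point $x\in G$ satisfies $\varphi_{x}(\delta_{x})=1\neq 0$ and so cannot kill $\mathrm{A}(G)$. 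Thus $\varphi\in\triangle(\mathrm{B}(G))\setminus G$ and $\varphi(f)\neq 0$, whence $f\notin\mathrm{B}_{0}(G)\cap\mathrm{NS}(G)$ by Theorem~\ref{glz}\,\eqref{zafone}.

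The argument turns entirely on the single identity $\mathrm{dist}(f^{n},\mathrm{A}(G))=\|f^{n}\|$: this is where the hypothesis ``$f^{n}$ singular for all $n$'' enters, via the $\ell^{1}$-type orthogonal splitting $\mathrm{B}(G)=B_{s}(G)\oplus\mathrm{A}(G)$ of Theorem~\ref{ort}(ii) (equivalently, Fact~\ref{norm}), which keeps the quotient norms $\|[f]^{n}\|$ as large as $\|f^{n}\|$ and hence the spectral radius of $[f]$ positive; once this is in hand, everything else is bookkeeping with the structure of $\triangle(\mathrm{B}(G))$ recorded in Fact~\ref{zawpod} and Theorem~\ref{glz}. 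This is the non-commutative transcription of Zafran's proof of Theorem~3.6 in \cite{Zafran}, with $\mathrm{A}(G)$, $B_{s}(G)$ and $\mathrm{B}(G)/\mathrm{A}(G)$ playing the roles of $L^{1}$, $M_{s}$ and $M/L^{1}$.
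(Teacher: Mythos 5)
Your proof is correct and is essentially the paper's argument: both hinge on the identity $\operatorname{dist}(f^{n},\mathrm{A}(G))=\|f^{n}\|$ coming from singularity via Theorem~\ref{ort} and Fact~\ref{norm}, followed by the spectral radius formula in a quotient by $\mathrm{A}(G)$. The only (cosmetic) difference is that the paper argues by contradiction inside $(\mathrm{B}_{0}(G)\cap\mathrm{NS}(G))/\mathrm{A}(G)$, using Theorem~\ref{glz}\,(iii) and semisimplicity of $\mathrm{B}(G)$ to force $f=0$, whereas you work in $\mathrm{B}(G)/\mathrm{A}(G)$, extract a character in $h(\mathrm{A}(G))$ not annihilating $f$, and close with Theorem~\ref{glz}\,(i).
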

\begin{proof}
Suppose, towards the contradiction that $f\in \mathrm{B}_{0}(G)\cap \mathrm{NS}(G)$. The quotient algebra $\left(\mathrm{B}_{0}(G)\cap \mathrm{NS}(G)\right)/\mathrm{A}(G)$ has empty maximal ideal space by item 3. of Theorem \ref{glz}. Using the spectral radius formula and the definition of the quotient norm we obtain
\begin{equation*}
\lim_{n\rightarrow\infty}\left(\inf\{\|f^{n}+g\|:g\in \mathrm{A}(G)\}\right)^{\frac{1}{n}}=0.
\end{equation*}
But, by Theorem \ref{ort} and Fact \ref{norm} we have $\|f^{n}+g\|\geq \|f^{n}\|+\|g\|\geq \|f^{n}\|$ for every $g\in \mathrm{A}(G)$. This shows that spectral radius of $f$ in $\mathrm{B}(G)$ is equal to zero so since $\mathrm{B}(G)$ is semisimple we obtain $f=0$ contradicting the assumption.
\end{proof}
In order to proceed further, we need to investigate the supports of products of elements of $\mathrm{B}(G)$. We will recall now how the product of two elements of $\mathrm{B}(G)$ can be viewed as a (normal) functional on $C^{\ast}(G)$ ($W^{\ast}(G)$). There is a unitary representation of $G$ given by $G \ni g \mapsto u_g \otimes u_g \in C^{\ast}(G) \otimes C^{\ast}(G)\subset W^{\ast}(G) \otimes W^{\ast}(G)$, where $u_g$ is the element of the group viewed as sitting inside $C^{\ast}(G)$. When we complete the tensor products to obtain a $C^{\ast}$-algebra (or von Neumann algebra, in the case $W^{\ast}(G) \otimes W^{\ast}(G)$), we get an injective $\ast$-homomorphism $\Delta: C^{\ast}(G) \to C^{\ast}(G) \otimes C^{\ast}(G)$, produced by the universal property of $C^{\ast}(G)$. If we view $\Delta$ as having values in $W^{\ast}(G) \otimes W^{\ast}(G)$, then the universal property of $W^{\ast}(G)$ gives us an extension $\Delta: W^{\ast}(G) \to W^{\ast}(G) \otimes W^{\ast}(G)$. Now, if $f,g \in \mathrm{B}(G)$ then $(fg)(x) := (f\otimes g)(\Delta(x))$ for $x \in W^{\ast}(G)$. Using this, we can prove the following proposition.
\begin{prop}\label{wzornos}
Let $f,g \in \mathrm{B}(G)$ be positive definite. Then
\begin{equation}\label{form:support}
\mathrm{ s}(fg)=\inf\{p \in \mathcal{P}(W^{\ast}(G)): \Delta(p) \geqslant \mathrm{ s}(f) \otimes \mathrm{ s}(g)\},
\end{equation}
where $\mathcal{P}(W^{\ast}(G))$ is the lattice of projections in $W^{\ast}(G)$.
\end{prop}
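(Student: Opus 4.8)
The plan is to view $fg$ as the positive normal functional $(f\otimes g)\circ\Delta$ on $W^{\ast}(G)$ that was described just above the statement, and to read off the formula from a standard description of the support of a positive functional in terms of its values on projections. Throughout, recall that $f,g$ positive definite means $f,g\in W^{\ast}(G)_{\ast}$ are positive, so $f\otimes g$ is a positive normal functional on the von Neumann tensor product $W^{\ast}(G)\otimes W^{\ast}(G)$, and $fg$ is positive and normal on $W^{\ast}(G)$.

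First I would record an elementary lemma about supports: if $M$ is a von Neumann algebra and $P,q\in\mathcal{P}(M)$, then $PqP=P$ if and only if $q\geqslant P$ (indeed $PqP\leqslant P$ always, while $PqP=P$ forces $(1-q)^{1/2}P=0$, hence $(1-q)P=0$, i.e.\ $q\geqslant P$; the converse is a one-line vector computation). Consequently, if $\varphi\in M^{\ast}_{+}$ has support $\mathrm{s}(\varphi)$, then for a projection $q\in M$ one has $\varphi(q)=\varphi(1)$ if and only if $q\geqslant\mathrm{s}(\varphi)$: the equality $\varphi(1-q)=0$ together with faithfulness of $\varphi$ on $\mathrm{s}(\varphi)M\mathrm{s}(\varphi)$ gives $\mathrm{s}(\varphi)(1-q)\mathrm{s}(\varphi)=0$, whence $q\geqslant\mathrm{s}(\varphi)$ by the lemma, and the reverse implication is immediate since then $\varphi(q)\geqslant\varphi(\mathrm{s}(\varphi))=\varphi(1)$.

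Next I would identify the support of $f\otimes g$ as $\mathrm{s}(f)\otimes\mathrm{s}(g)$. This is because $f\otimes g$ annihilates both $(1-\mathrm{s}(f))\otimes 1$ and $1\otimes(1-\mathrm{s}(g))$, whose join equals $1-\mathrm{s}(f)\otimes\mathrm{s}(g)$, while the restriction of $f\otimes g$ to $\bigl(\mathrm{s}(f)W^{\ast}(G)\mathrm{s}(f)\bigr)\otimes\bigl(\mathrm{s}(g)W^{\ast}(G)\mathrm{s}(g)\bigr)$ is faithful, being the tensor product of two faithful normal functionals; hence no nonzero subprojection of $\mathrm{s}(f)\otimes\mathrm{s}(g)$ is annihilated (cf.\ \cite{tak}). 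With this in hand the argument closes: since $\Delta$ is a unital normal $\ast$-homomorphism, $\Delta(p)\in\mathcal{P}(W^{\ast}(G)\otimes W^{\ast}(G))$ for every $p\in\mathcal{P}(W^{\ast}(G))$ and $\Delta(1)=1\otimes 1$, so $(fg)(1)=(f\otimes g)(1\otimes 1)=\|f\|\,\|g\|=(f\otimes g)(1)$, and for a projection $p\in W^{\ast}(G)$ the two preceding steps yield
\[
(fg)(p)=(fg)(1)\iff(f\otimes g)(\Delta(p))=(f\otimes g)(1)\iff\Delta(p)\geqslant\mathrm{s}(f)\otimes\mathrm{s}(g).
\]
By definition $\mathrm{s}(fg)$ is the smallest projection $p$ with $(fg)(p)=(fg)(1)$; the displayed equivalence shows $\mathrm{s}(fg)$ lies in $S:=\{p\in\mathcal{P}(W^{\ast}(G)):\Delta(p)\geqslant\mathrm{s}(f)\otimes\mathrm{s}(g)\}$, while any $p\in S$ satisfies $(fg)(p)=(fg)(1)$ and hence $p\geqslant\mathrm{s}(fg)$. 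Thus $\mathrm{s}(fg)=\min S=\inf S$, which is exactly \eqref{form:support}.

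The one step that needs genuine care is the identification $\mathrm{s}(f\otimes g)=\mathrm{s}(f)\otimes\mathrm{s}(g)$ — i.e.\ making the faithfulness argument for the tensor functional precise (or citing it cleanly) — together with being careful that the infimum in \eqref{form:support} is actually attained; everything else is bookkeeping with projections and with the universal property of $\Delta$ that has already been set up.
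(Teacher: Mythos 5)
Your argument is correct and follows essentially the same route as the paper's proof: both characterise $\mathrm{s}(fg)$ via the equivalence $\varphi(p)=\|\varphi\|\iff p\geqslant\mathrm{s}(\varphi)$ and the identity $\mathrm{s}(f\otimes g)=\mathrm{s}(f)\otimes\mathrm{s}(g)$, which the paper leaves as ``one can check'' and you verify in detail. The only cosmetic difference is that the paper first normalises $f(e)=g(e)=1$ so that all the relevant values equal $1$.
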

\begin{proof}
By rescaling, we may assume that $f$ and $g$ are normalized, i.e. $f(e)=g(e)=1$.

Denote the right-hand side of the formula (\ref{form:support}) by $q$. We will show that $q \leqslant \mathrm{ s}(fg)$ and $\mathrm{ s}(fg)\leqslant q$. Let us start with the inequality $q\leqslant \mathrm{ s}(fg)$. By definition of the support, we get $1=(fg)(\mathrm{ s}(fg)) = (f \otimes g)(\Delta(\mathrm{ s}(fg)))$. One can check that $\mathrm{ s}(f\otimes g) = \mathrm{ s}(f)\otimes \mathrm{ s}(g)$, so we must have $\Delta(\mathrm{ s}(fg)) \geqslant \mathrm{ s}(f) \otimes \mathrm{ s}(g)$. By definition, $q$ is the smallest projection that satisfies $\Delta(q)\geqslant \mathrm{ s}(f) \otimes \mathrm{ s}(g)$, so $\mathrm{ s}(fg) \geqslant q$. To prove the reverse inequality, just note $\mathrm{ s}(fg)$ is the smallest projection such that $fg(\mathrm{ s}(fg))=1$ and any projection $q'$ satisfying $\Delta(q') \geqslant \mathrm{ s}(f) \otimes \mathrm{ s}(g)$ gives $fg(q')=1$.
\end{proof}
\begin{rem}
This proposition holds verbatim for normal states on Hopf--von Neumann algebras; the only property used is the existence of the $\ast$-homomorphism $\Delta$ implementing the multiplication of functionals.
\end{rem}
We can draw a nice conclusion from this formula.
\begin{cor}\label{przen}
Let $f_1$, $f_2$, $g_1$, and $g_2$ be positive definite and satisfy $\mathrm{ s}(g_1) \leqslant \mathrm{ s}(f_1)$ and $\mathrm{ s}(g_2) \leqslant \mathrm{ s}(f_2)$. Then $\mathrm{ s}(g_1 g_2) \leqslant \mathrm{ s}(f_1 f_2)$, so $g_1 g_2 \ll f_1 f_2$. In particular, if $f_1$ and $f_2$ are GNS faithful and $g_1\ll f_1$ and $g_2 \ll f_2$ then $g_1 g_2 \ll f_1 f_2$. What is more, if $f_1$ and $f_2$ are GNS faithful then also $f_1 f_2$ is GNS faithful, so this conclusion can be iterated, i.e. if we have $g_1,\dots, g_n$ satisfying $g_i \ll f_i$, where $f_i$ is GNS faithful, then $g_1\cdots g_n \ll f_1\cdots f_n$.
\end{cor}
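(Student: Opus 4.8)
The plan is to apply Proposition \ref{wzornos} to obtain the support-monotonicity statement and then bootstrap from it. First I would fix positive definite elements $f_1,f_2,g_1,g_2$ with $\mathrm{s}(g_i)\leqslant \mathrm{s}(f_i)$. Since $\mathrm{s}(g_1)\otimes \mathrm{s}(g_2)\leqslant \mathrm{s}(f_1)\otimes \mathrm{s}(f_2)$ in $W^{\ast}(G)\otimes W^{\ast}(G)$, any projection $p\in\mathcal{P}(W^{\ast}(G))$ with $\Delta(p)\geqslant \mathrm{s}(f_1)\otimes \mathrm{s}(f_2)$ also satisfies $\Delta(p)\geqslant \mathrm{s}(g_1)\otimes \mathrm{s}(g_2)$; hence the set of competitors in the infimum defining $\mathrm{s}(g_1 g_2)$ contains that defining $\mathrm{s}(f_1 f_2)$, and taking infima reverses the inclusion to give $\mathrm{s}(g_1 g_2)\leqslant \mathrm{s}(f_1 f_2)$. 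By the definition of absolute continuity (via the central support, which is dominated by the regular support: $\mathrm{zs}(\varphi)$ is the central cover of $\mathrm{s}(\varphi)$, so $\mathrm{s}(a)\leqslant \mathrm{s}(b)$ implies $\mathrm{zs}(a)\leqslant\mathrm{zs}(b)$) this yields $g_1 g_2\ll f_1 f_2$.

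For the ``in particular'' clause, recall that GNS faithfulness of $f_i$ means, by the Fact preceding Proposition \ref{Prop:density}, that $\mathrm{s}(f_i)=\mathrm{zs}(f_i)$. If $g_i\ll f_i$ then $\mathrm{zs}(g_i)\leqslant \mathrm{zs}(f_i)=\mathrm{s}(f_i)$, and since $\mathrm{s}(g_i)\leqslant\mathrm{zs}(g_i)$ we get $\mathrm{s}(g_i)\leqslant\mathrm{s}(f_i)$, so the first part applies and gives $g_1 g_2\ll f_1 f_2$.

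It remains to show that $f_1 f_2$ is GNS faithful when $f_1,f_2$ are, which is what allows the iteration. Here I would use the formula from Proposition \ref{wzornos} together with $\mathrm{s}(f_i)=\mathrm{zs}(f_i)$: the tensor product $\mathrm{s}(f_1)\otimes \mathrm{s}(f_2)=\mathrm{zs}(f_1)\otimes\mathrm{zs}(f_2)$ is a central projection in $W^{\ast}(G)\otimes W^{\ast}(G)$. Since $\Delta$ is a normal $\ast$-homomorphism, $\Delta^{-1}$ of a central projection is central, and one checks that $q:=\inf\{p\in\mathcal{P}(W^{\ast}(G)):\Delta(p)\geqslant \mathrm{s}(f_1)\otimes\mathrm{s}(f_2)\}$ is then central (it is the smallest central projection whose image under $\Delta$ dominates a central projection, which equals the central support of the functional $f_1 f_2$); thus $\mathrm{s}(f_1 f_2)=q$ is central, hence equals $\mathrm{zs}(f_1 f_2)$, i.e. $f_1 f_2$ is GNS faithful. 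Then an easy induction on $n$, using associativity of the product and of $\Delta$, gives $g_1\cdots g_n\ll f_1\cdots f_n$ whenever each $f_i$ is GNS faithful and $g_i\ll f_i$.

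The main obstacle I anticipate is the verification that $f_1 f_2$ is GNS faithful, i.e. that the projection $q$ produced by the infimum formula is genuinely central; this requires care with how $\Delta$ interacts with centers and with the fact that $\mathrm{s}(f_1)\otimes\mathrm{s}(f_2)$ being central forces the relevant infimum to be attained at a central projection. Everything else is a routine manipulation of supports and central supports using facts already established in the excerpt.
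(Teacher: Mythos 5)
The first two parts of your argument are correct and coincide with what the paper does implicitly: the monotonicity of the infimum in Proposition \ref{wzornos} gives $\mathrm{s}(g_1 g_2)\leqslant \mathrm{s}(f_1 f_2)$, and since the central cover of a projection is monotone this yields $g_1 g_2 \ll f_1 f_2$; the ``in particular'' clause then follows exactly as you say. (One small slip: you write that the central support is \emph{dominated by} the regular support, whereas in fact $\mathrm{s}(\varphi)\leqslant \mathrm{zs}(\varphi)$; your actual use of the central cover is nevertheless correct.)

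The gap is in the one step the paper itself calls nontrivial: showing that $f_1 f_2$ remains GNS faithful. Your proposed mechanism does not close it. The observation that the preimage under the injective $\ast$-homomorphism $\Delta$ of a central element is central is true but inapplicable, because $q=\inf\{p:\Delta(p)\geqslant \mathrm{zs}(f_1)\otimes\mathrm{zs}(f_2)\}$ is not a preimage: in general $\Delta(q)$ strictly dominates $\mathrm{zs}(f_1)\otimes\mathrm{zs}(f_2)$ and there is no reason for $\Delta(q)$ to be central, since the central projection $\mathrm{zs}(f_1)\otimes\mathrm{zs}(f_2)$ typically does not lie in the range of $\Delta$. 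Your parenthetical claim that $q$ ``is the smallest central projection whose image under $\Delta$ dominates a central projection'' asserts precisely the centrality that has to be proved, and you flag this yourself as the main obstacle. The missing idea is a unitary-invariance argument: for any unitary $u\in W^{\ast}(G)$, conjugating $\Delta(q)\geqslant \mathrm{zs}(f_1)\otimes\mathrm{zs}(f_2)$ by $\Delta(u)$ gives $\Delta(uqu^{\ast})\geqslant \Delta(u)\left(\mathrm{zs}(f_1)\otimes\mathrm{zs}(f_2)\right)\Delta(u)^{\ast}=\mathrm{zs}(f_1)\otimes\mathrm{zs}(f_2)$, the last equality by centrality of the right-hand side. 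Hence $uqu^{\ast}$ is a competitor in the infimum and minimality gives $uqu^{\ast}\geqslant q$; applying the same to $u^{\ast}$ yields $uqu^{\ast}=q$, so $q$ commutes with every unitary and is therefore central, i.e. $\mathrm{s}(f_1f_2)=\mathrm{zs}(f_1f_2)$. With that supplied, your induction for $n$ factors goes through.
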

\begin{proof}
The only nontrivial thing to check is that $f_1 f_2$ remains GNS faithful, if $f_1$ and $f_2$ were GNS faithful. Note that in this case the support of $f_1 f_2$, which we denote by $q$, is the smallest projection such that $\Delta(q) \geqslant \mathrm{ zs}(f_1) \otimes \mathrm{ zs}(f_2)$. Let $u\in W^{\ast}(G)$ be a unitary and let us multiply both sides of the inequality by $\Delta(u)$ on the left and $\Delta(u)^{\ast}$ on the right, resulting in
$$
\Delta(uqu^{\ast}) \geqslant \Delta(u) (\mathrm{ zs}(f_1) \otimes \mathrm{ zs}(f_2)) \Delta(u)^{\ast}.
$$
Since $\mathrm{ zs}(f_1) \otimes \mathrm{ zs}(f_2)$ belongs to the center of $W^{\ast}(G)\otimes W^{\ast}(G)$, the right-hand side just equals $\mathrm{ zs}(f_1) \otimes \mathrm{ zs}(f_2)$, hence we obtain $uqu^{\ast} \geqslant q$. But $u^{\ast}$ is also a unitary, so we also get $u^{\ast}q u \geqslant u$, i.e. $q \geqslant u q u^{\ast}$, so $uqu^{\ast}=q$. It means that $uq=qu$, so $q$ belongs to the center of $W^{\ast}(G)$.
\end{proof}
These statements are not fully satisfactory, because one might have hoped for the following implication to hold: if $f_1$ and $f_2$ are positive definite and $g_1 \ll f_2$ and $g_2 \ll f_2$, then $g_1 g_2 \ll f_1 f_2$ (note that this property holds for Abelian groups - see for example Appendix A.6 in \cite{grmc}). This is, unfortunately, not true, as the following example shows.
\begin{prop}
Let $\pi: S_3 \to \mathrm{M}_2$ be the standard, two-dimensional, irreducible representation of the permutation group $S_3$. Let $f(g) = \mathrm{Tr}(\pi(g))$ and $h_1(g):= \langle e_1, \pi(g) e_1 \rangle$, where $e_1$ is the first basis vector of the canonical basis of $\mathbb{C}^2$ on which $S_3$ acts. Then $f \ll h$ but it is not true that $f^2 \ll h^2$.
\end{prop}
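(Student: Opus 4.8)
The plan is to do everything by hand inside the finite-dimensional algebra $W^{\ast}(S_3)$. Since $S_3$ has exactly three irreducible representations -- the trivial one $\varepsilon$, the sign character $\mathrm{sgn}$, and the standard two-dimensional $\pi$ -- we have $C^{\ast}(S_3)=W^{\ast}(S_3)=\mathbb{C}\oplus\mathbb{C}\oplus\mathrm{M}_{2}$, with corresponding minimal central projections $z_{\varepsilon}$, $z_{\mathrm{sgn}}$, $z_{\pi}$ summing to $1$. Under this identification a positive definite function on $S_3$ becomes a positive functional on $\mathbb{C}\oplus\mathbb{C}\oplus\mathrm{M}_{2}$, i.e. a triple consisting of a nonnegative number, a nonnegative number and a positive $2\times2$ matrix (its ``density''); its central support is the sum of those $z_{\bullet}$ for which the corresponding entry is nonzero. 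The whole proof is then the bookkeeping of which blocks each of $f$, $h_{1}$, $f^{2}$, $h_{1}^{2}$ occupies.

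First, $f\ll h_{1}$. The character $f=\mathrm{Tr}\circ\pi$ corresponds to the (faithful) trace functional on the $\mathrm{M}_{2}$-block and vanishes on the other two, so $\mathrm{s}(f)$ equals the unit of $\mathrm{M}_{2}$, which is central, whence $\mathrm{zs}(f)=z_{\pi}$. The diagonal coefficient $h_{1}(g)=\langle e_{1},\pi(g)e_{1}\rangle$ corresponds to the functional $X\mapsto X_{11}$ on the $\mathrm{M}_{2}$-block and again vanishes elsewhere; its regular support is the rank-one projection onto $\mathbb{C}e_{1}$, which is not central, but the smallest central projection above it is once more $z_{\pi}$, so $\mathrm{zs}(h_{1})=z_{\pi}=\mathrm{zs}(f)$ and $f\ll h_{1}$. (One can alternatively invoke Proposition \ref{Prop:translation}: since $\pi$ is irreducible, $f=\chi_{\pi}$ is a linear combination of two-sided translates of $h_{1}$.)

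For the negative statement I would realise both squares inside $\pi\otimes\pi$, acting on $\mathbb{C}^{2}\otimes\mathbb{C}^{2}$. On the one hand $f^{2}=\chi_{\pi}^{2}=\chi_{\pi\otimes\pi}$; since $\Lambda^{2}\mathbb{C}^{2}=\mathbb{C}(e_{1}\otimes e_{2}-e_{2}\otimes e_{1})$ is a subrepresentation on which $g$ acts by $\det\pi(g)=\mathrm{sgn}(g)$, the representation $\pi\otimes\pi$ contains $\mathrm{sgn}$ (equivalently $\langle\chi_{\pi}^{2},\chi_{\mathrm{sgn}}\rangle=1$), so $\chi_{\pi\otimes\pi}$ has nonzero entry in the $\mathrm{sgn}$-block and $\mathrm{zs}(f^{2})\geqslant z_{\mathrm{sgn}}$. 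On the other hand $h_{1}^{2}(g)=\langle e_{1}\otimes e_{1},(\pi\otimes\pi)(g)\,e_{1}\otimes e_{1}\rangle$ is the vector state of $\pi\otimes\pi$ at $\xi:=e_{1}\otimes e_{1}$, and $\xi$ lies in the symmetric part $\mathrm{Sym}^{2}\mathbb{C}^{2}$, which is orthogonal to the $\mathrm{sgn}$-isotypic subspace $\Lambda^{2}\mathbb{C}^{2}$. Hence $h_{1}^{2}$ annihilates $z_{\mathrm{sgn}}W^{\ast}(S_3)$, i.e. $\mathrm{zs}(h_{1}^{2})\leqslant 1-z_{\mathrm{sgn}}$. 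As all projections involved are central and commute, $\mathrm{zs}(f^{2})\wedge\bigl(1-\mathrm{zs}(h_{1}^{2})\bigr)\geqslant z_{\mathrm{sgn}}\neq 0$, so $\mathrm{zs}(f^{2})\not\leqslant\mathrm{zs}(h_{1}^{2})$; that is, $f^{2}\not\ll h_{1}^{2}$.

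I do not expect a genuine obstacle: the argument is a finite computation. The only points needing care are the precise dictionary between positive definite functions on $S_3$ and positive functionals on $\mathbb{C}\oplus\mathbb{C}\oplus\mathrm{M}_{2}$ and the resulting recipe for central supports (a block is seen by the functional exactly when its density there is nonzero). The conceptual content worth emphasising is that $h_{1}^{2}$ is a vector state arising from a \emph{symmetric} tensor and therefore cannot detect the sign representation, which nevertheless occurs in $\pi\otimes\pi$, i.e. in $f^{2}$; this is precisely why the GNS-faithfulness hypothesis in Corollary \ref{przen} is indispensable -- $h_{1}$ is not GNS faithful, its support being the non-central projection $E_{11}$.
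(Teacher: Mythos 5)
Your argument is correct and is essentially the paper's own: both proofs rest on the decomposition $\pi^{\otimes 2}=\mathds{1}\oplus\mathrm{sgn}\oplus\pi$ versus $\mathrm{Sym}^{2}\pi=\mathds{1}\oplus\pi$, so that $f^{2}$ sees the sign block while $h_{1}^{2}$, being the vector state at the symmetric tensor $e_{1}\otimes e_{1}$, annihilates it. Your write-up is marginally more complete in that it also verifies $f\ll h_{1}$ explicitly (via $\mathrm{zs}(f)=\mathrm{zs}(h_{1})=z_{\pi}$), which the paper leaves implicit.
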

\begin{proof}
Introduce $h_2(g):=\langle e_2, \pi(g) e_2\rangle$. We get $f=h_1+h_2$, therefore $f^2=h_1^2 + 2h_1 h_2 + h_2^2$. Since $h_i h_j(g) = \langle e_i \otimes e_j, (\pi(g)\otimes \pi(g)) (e_i \otimes e_j)\rangle$ for $i,j=1,2$, it is not hard to verify that the GNS representation associated with $f^2$ contains the tensor square $\pi^{\otimes 2}$, which decomposes into irreducibles as follows: $\pi^{\otimes 2} = \mathds{1} \oplus \mathrm{ sgn} \oplus \pi$, where $\mathds{1}$ is the one-dimensional trivial representation and $\mathrm{ sgn}$ is the one-dimensional sign representation: it is a faithful representation of $\mathbb{C}[S_3]$, hence the central support of $f^2$ is equal to the identity. On the other hand, the representation associated with $h_1^2$ is the \emph{symmetric} tensor square, which is a three-dimensional representation that decomposes as $\mathds{1} \oplus \pi$; its central support is orthogonal to the central support of the sign function, hence is strictly smaller than the central support of $f^2$.
\end{proof}
We will now apply our results about supports of products to obtain a criterion for being singular to the Zafran ideal.
\begin{prop}\label{singzaf}
Let $G$ be a discrete group and let $f\in \mathrm{B}(G)$ be positive, GNS faithful and such that for every $n\in\mathbb{N}$, $f^{n}$ is singular. Then $f\bot \mathrm{B}_{0}(G)\cap \mathrm{NS}(G)$.
\end{prop}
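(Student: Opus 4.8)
The plan is to reduce the statement to Theorem \ref{nz} by showing that $f^n$ is \emph{far} (in norm) from the Zafran ideal, using orthogonality. Recall that Theorem \ref{nz} already tells us that under the hypotheses here $f\notin \mathrm{B}_0(G)\cap \mathrm{NS}(G)$, but we want more: that $f$ is in fact singular \emph{to} the whole ideal, i.e. $\mathrm{zs}(f)\cdot \mathrm{zs}(g)=0$ for every $g\in \mathrm{B}_0(G)\cap \mathrm{NS}(G)$. Equivalently, writing $z$ for the central projection with $\mathrm{B}_0(G)\cap \mathrm{NS}(G)=z\,\mathrm{B}(G)$ (this is an $L$-ideal by the discussion after Corollary \ref{ortwn}, so such $z$ exists by Proposition \ref{Prop:Lspaces}), we must show $\mathrm{zs}(f)\leq 1-z$, i.e. $z\,f=0$, i.e. the component of $f$ lying in the Zafran ideal under the orthogonal Lebesgue-type decomposition of Proposition \ref{rl} vanishes.

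First I would invoke Proposition \ref{rl} to split $f=f_1+f_2$ with $f_1=z f\in \mathrm{B}_0(G)\cap \mathrm{NS}(G)$ and $f_2=(1-z)f\perp \mathrm{B}_0(G)\cap \mathrm{NS}(G)$; since $f$ is positive and $z$ is central, $f_1$ and $f_2$ are both positive, $\mathrm{s}(f_1)=z\,\mathrm{s}(f)$, $\mathrm{s}(f_2)=(1-z)\mathrm{s}(f)$, and in particular $\mathrm{s}(f_1)\leq \mathrm{s}(f)$ and $\mathrm{s}(f_2)\leq\mathrm{s}(f)$. The goal is $f_1=0$. The key leverage is Corollary \ref{przen}: because $f$ is GNS faithful, $f^n$ is GNS faithful for all $n$, and $\mathrm{s}(f_1)\leq \mathrm{s}(f)$ gives, upon iterating, $\mathrm{s}(f_1^{\,n})\leq \mathrm{s}(f^{\,n})$, hence $f_1^{\,n}\ll f^{\,n}$. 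Now $f^n$ is singular, i.e. $f^n\perp \delta_e$, i.e. $\mathrm{zs}(f^n)\cdot \mathrm{zs}(\delta_e)=0$; absolute continuity $f_1^{\,n}\ll f^{\,n}$ forces $\mathrm{zs}(f_1^{\,n})\leq\mathrm{zs}(f^{\,n})$, so $\mathrm{zs}(f_1^{\,n})\cdot\mathrm{zs}(\delta_e)=0$ as well, i.e. $f_1^{\,n}$ is singular for every $n$. But $f_1\in \mathrm{B}_0(G)$ (the ideal sits inside $\mathrm{B}_0(G)$), so if $f_1\neq 0$ then Theorem \ref{nz} applies to $f_1$ and yields $f_1\notin \mathrm{B}_0(G)\cap \mathrm{NS}(G)$ — contradicting $f_1=zf\in \mathrm{B}_0(G)\cap\mathrm{NS}(G)$. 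Therefore $f_1=0$, which says exactly $zf=0$, i.e. $\mathrm{zs}(f)\leq 1-z$, i.e. $f\perp \mathrm{B}_0(G)\cap \mathrm{NS}(G)$.

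The step I expect to be the main obstacle — or at least the one requiring the most care — is the transfer of the hypothesis ``$f^n$ singular'' to ``$f_1^{\,n}$ singular''. This is where GNS faithfulness of $f$ is essential: without it, the support of a product need not be central (cf.\ the $S_3$ example preceding Proposition \ref{singzaf}), and the iteration $\mathrm{s}(f_1^{\,n})\leq\mathrm{s}(f^{\,n})$ in Corollary \ref{przen} could fail to propagate, or the passage from $\mathrm{s}$ to $\mathrm{zs}$ could break down. One must check carefully that $\mathrm{zs}(f^n)\le \mathrm{zs}(\delta_e)^{\perp}$ together with $\mathrm{zs}(f_1^n)\le\mathrm{zs}(f^n)$ genuinely gives singularity of $f_1^n$ against $\delta_e$ — this is immediate from the definitions once the absolute-continuity chain is in place, but it hinges on $f_1^n\ll f^n$, which in turn relies on $f_1\ll f$ combined with the GNS-faithful iteration of Corollary \ref{przen}. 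Everything else (the Lebesgue decomposition, positivity of the pieces, the final appeal to Theorem \ref{nz}) is routine given the machinery already developed.
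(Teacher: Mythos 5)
Your proof is correct, and it reaches the conclusion by a decomposition running in the opposite direction to the paper's. The paper argues by contradiction: it takes an arbitrary $g$ in the Zafran ideal not singular to $f$, replaces it by $|g|$ (still in the ideal, since the ideal is an $L$-ideal), performs the Lebesgue decomposition of $|g|$ \emph{relative to} $f$ to extract a nonzero positive $g_1\ll f$ lying in the ideal, and then applies Corollary \ref{przen} and Theorem \ref{nz} to $g_1$. You instead decompose $f$ itself against the central projection $z$ implementing the ideal (Proposition \ref{Prop:Lspaces}), show that the component $f_1=zf$ would violate Theorem \ref{nz} unless it vanishes, and read off $\mathrm{zs}(f)\leq 1-z$ directly. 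The two arguments use exactly the same engine --- GNS faithfulness of $f$ feeding Corollary \ref{przen} to get $(\cdot)^n\ll f^n$, then Theorem \ref{nz} --- but yours is slightly more structural: it produces the clean identity $zf=0$ rather than refuting one offending $g$ at a time, at the cost of explicitly invoking the central projection of the $L$-ideal (which the paper's proof only uses implicitly, to know that $|g|$ and $g_1$ stay in the ideal). One small point worth making explicit in your write-up: what Corollary \ref{przen} needs is $\mathrm{s}(f_1)\leq\mathrm{s}(f)$, which you get from $f_1\ll f$ via $\mathrm{s}(f_1)\leq\mathrm{zs}(f_1)\leq\mathrm{zs}(f)=\mathrm{s}(f)$, the last equality being GNS faithfulness of $f$; the identity $\mathrm{s}(f_1)=z\,\mathrm{s}(f)$ is true but not needed.
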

\begin{proof}
Suppose, to the contrary, that there exists $g\in \mathrm{B}_{0}(G)\cap \mathrm{NS}(G)$ such that $f$ is not singular with respect to $g$. We have $|g|=u^{\ast}g \in \mathrm{B}_{0}(G)\cap \mathrm{NS}(G)$, since $\mathrm{B}_0(G) \cap \mathrm{NS}(G)$ is an $L$-ideal. Using Lebesgue decomposition we are able to write $|g|=g_{1}+g_{2}$ where $g_{1}\ll f$ and $g_{2}\bot f$, $0\neq g_{1}$, and both $g_{1}$ and $g_{2}$ are positive definite. From positivity it follows that $g_1 \ll |g|$, so $g_{1}\in \mathrm{B}_{0}(G)\cap \mathrm{NS}(G)$. By Corollary \ref{przen} we have $g_{1}^{n}\ll f^{n}$ for every $n\in\mathbb{N}$. But, since each $f^{n}$ is singular we get $g_{1}^{n}$ is singular for every $n\in\mathbb{N}$ which contradicts Theorem \ref{nz}.
\end{proof}
\section{An approach to the Gelfand space of \texorpdfstring{$\mathrm{B}(G)$}{\mathrm{B}(G)}}\label{approach1}
In this section we will establish a description of elements of $\triangle(\mathrm{B}(G))$ using a non-commutative counterpart of the so-called \textbf{generalised characters} introduced (in the commutative case) by Schreider \cite{schreider} and used extensively later by many authors in investigations of spectral properties of measures. As we are going to use Proposition \ref{Prop:density}, where the separability of $A=C^{\ast}(G)$ is assumed, we will restrict our attention to second countable groups $G$ (note that that the Haar measure on a second countable locally compact group is $\sigma$-finite, therefore $L^{1}(G)$ is separable, hence so is its completion $C^{\ast}(G)$) .

For a positive definite $f\in \mathrm{B}(G)$ we define $\mathrm{A}(f):=\{g\in \mathrm{B}(G):g\ll f\}$. $\mathrm{A}(f)$ is a closed linear subspace of $\mathrm{B}(G)$. Moreover, if $f^{2}\ll f$ and $f$ is GNS faithful then Lemma \ref{przen} shows that $\mathrm{A}(f)$ is a closed subalgebra of $\mathrm{B}(G)$. Also, $\mathrm{A}(f)^{\ast}$ is naturally von Neumann algebra consisting of restrictions of elements from $W^{\ast}(G)$ to $\mathrm{A}(f)$, which may be identified with $\mathrm{vN}(|f|)$. Let $\varphi\in \mathrm{B}(G)^{\ast}$. Then clearly $\varphi|_{\mathrm{A}(f)}\in \mathrm{A}(f)^{\ast}$ and a unique element of $x\in \mathrm{A}(f)^{\ast}$ satisfying $\varphi(g)=x(g)$ for all $g\in \mathrm{A}(f)$ will be denoted by $\varphi_{f}$. There is also an obvious \textbf{compatibility condition} on restrictions to $\mathrm{A}(f_{1})$ and $\mathrm{A}(f_{2})$, i.e.
\begin{equation}\label{komp}
\text{If }f_{1},f_{2}\in \mathrm{B}(G)\text{ are positive definite and }f_{1}\ll f_{2}\text{ then }\varphi_{f_{2}}|_{\mathrm{A}(f_{1})}=\varphi_{f_{1}}.
\end{equation}
In addition, we see also
\begin{equation}\label{normm}
\sup\{\|\varphi_{f}\|_{\mathrm{A}(f)^{\ast}}\: f\text{ -- positive definite}\}=\|\varphi\|_{\mathrm{B}(G)^{\ast}}.
\end{equation}
The following fact will show that the reverse procedure is also available.
\begin{f}\label{func}
Let $\{\varphi_{f}\}_{f}$ be a family of elements such that $\varphi_{f}\in \mathrm{A}(f)^{\ast}$ indexed by all positive definite functions on $\mathrm{B}(G)$ that compatibility condition \eqref{komp} and the norm condition, which means the finiteness of the left-hand side of \eqref{normm}. Then putting $\varphi(f):=\varphi_{|f|}(f)$ for all $f\in \mathrm{B}(G)$ we obtain an element $\varphi\in \mathrm{B}(G)^{\ast}$.
\end{f}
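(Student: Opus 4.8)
The plan is to check that the prescription $\varphi(f) := \varphi_{|f|}(f)$ defines a bounded linear functional on $\mathrm{B}(G)$. The norm bound is essentially built into the hypothesis (the finiteness of the supremum in \eqref{normm}), once we know $\varphi$ is linear, since for each $f$ we have $|\varphi(f)| = |\varphi_{|f|}(f)| \leq \|\varphi_{|f|}\|_{\mathrm{A}(|f|)^{\ast}} \|f\| \leq C\|f\|$ where $C$ is the left-hand side of \eqref{normm}. So the only real content is additivity: showing $\varphi(f+g) = \varphi(f) + \varphi(g)$ for arbitrary $f, g \in \mathrm{B}(G)$ (homogeneity $\varphi(\lambda f) = \lambda \varphi(f)$ is immediate from the compatibility condition, since $\mathrm{A}(|\lambda f|) = \mathrm{A}(|f|)$ when $\lambda \neq 0$, and trivial when $\lambda = 0$).

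The key idea for additivity is to find a single positive definite $h$ that dominates $f$, $g$, and $f+g$ simultaneously, and then use the compatibility condition \eqref{komp} to express all three values $\varphi(f)$, $\varphi(g)$, $\varphi(f+g)$ in terms of the \emph{same} functional $\varphi_h$, after which additivity follows from linearity of $\varphi_h$ on $\mathrm{A}(h)$. First I would take $h := |f| + |g|$ (a positive definite function). Then $f \ll h$: indeed $f = u|f|$ for a partial isometry $u \in W^{\ast}(G)$, so $f$ is a translate of $|f|$ in the sense of Proposition \ref{Prop:translation} (up to the approximation argument there), hence $\mathrm{zs}(f) = \mathrm{zs}(|f|) \leq \mathrm{zs}(h)$, giving $f \ll h$; similarly $g \ll h$ and $f+g \ll h$. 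Therefore $f, g, f+g \in \mathrm{A}(h)$, and by \eqref{komp} applied to $|f| \ll h$ we get $\varphi_h|_{\mathrm{A}(|f|)} = \varphi_{|f|}$, so in particular $\varphi_h(f) = \varphi_{|f|}(f) = \varphi(f)$; likewise $\varphi_h(g) = \varphi(g)$ and $\varphi_h(f+g) = \varphi(f+g)$. Since $\varphi_h$ is linear on the subspace $\mathrm{A}(h)$, we conclude $\varphi(f+g) = \varphi_h(f+g) = \varphi_h(f) + \varphi_h(g) = \varphi(f) + \varphi(g)$.

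The step I expect to require the most care is the verification that $f \ll |f|$ and that $f, f+g$ genuinely lie in $\mathrm{A}(h)$ for $h = |f|+|g|$. The relation $f \ll |f|$ should follow from $\mathrm{zs}(f) = \mathrm{zs}(|f|)$ (stated in the excerpt just after the definition of central support), and the monotonicity $\mathrm{zs}(|f|) \leq \mathrm{zs}(|f|+|g|)$ is clear because $|f| \leq |f|+|g|$ as positive functionals. For $f+g \ll h$ one notes $\mathrm{zs}(f+g) \leq \mathrm{zs}(f) \vee \mathrm{zs}(g) \leq \mathrm{zs}(h)$, since $f+g$ vanishes on $(1 - \mathrm{zs}(f))W^{\ast}(G) \cap (1-\mathrm{zs}(g))W^{\ast}(G) = (1 - \mathrm{zs}(f)\vee\mathrm{zs}(g))W^{\ast}(G)$. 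One small subtlety: if some of $f$, $g$, $f+g$ is zero, then the corresponding $\mathrm{A}(\cdot)$ degenerates, but in that case the relevant identity is trivial, so it can be dispatched separately at the outset. Finally one should remark that $\varphi$ so defined extends the given family, i.e. $\varphi_{f} = \varphi|_{\mathrm{A}(f)}$ in the notation preceding the Fact, which is immediate from the construction together with \eqref{komp}.
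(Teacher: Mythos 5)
Your proof is correct and follows essentially the same route as the paper: both arguments reduce additivity to the single dominating functional $\varphi_{|f|+|g|}$ via the key fact $|f+g|\ll |f|+|g|$ and the compatibility condition, with homogeneity and boundedness handled exactly as you do. If anything, your ordering (passing to $h=|f|+|g|$ \emph{before} splitting the sum) is slightly more careful than the paper's, which first writes $\varphi_{|f_1+f_2|}(f_1)+\varphi_{|f_1+f_2|}(f_2)$ even though $f_1,f_2$ need not lie in $\mathrm{A}(|f_1+f_2|)$.
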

\begin{proof}
The homogeneity is trivial by the compatibility condition as the multiplication by a scalar preserves the support. In order to prove linearity let us take $f_{1},f_{2}\in \mathrm{B}(G)$ and write:
\begin{equation*}
\varphi(f_{1}+f_{2})=\varphi_{|f_{1}+f_{2}|}(f_{1}+f_{2})=\varphi_{|f_{1}+f_{2}|}(f_{1})+\varphi_{|f_{1}+f_{2}|}(f_{2}),
\end{equation*}
now we use the fact $|f_{1}+f_{2}|\ll |f_{1}|+|f_{2}|$ and the compatibility condition to obtain
\begin{align*}
\varphi_{|f_{1}+f_{2}|}(f_{1})+\varphi_{|f_{1}+f_{2}|}(f_{2})&=\varphi_{|f_{1}|+|f_{2}|}(f_{1})+\varphi_{|f_{1}|+|f_{2}|}(f_{2}) \\
&=\varphi_{|f_{1}|}(f_{1})+\varphi_{|f_{2}|}(f_{2})=\varphi(f_{1})+\varphi(f_{2}).
\end{align*}
The norm condition ensures the boundedness of $\varphi$, which finishes the proof.
\end{proof}
In case $\varphi\in\triangle(\mathrm{B}(G))$ we have by Lemma \ref{przen} the following \textbf{multiplicative property} for $f,g,\omega\in \mathrm{B}(G)$, $\omega$ positive definite and \emph{GNS faithful}, $f,g\ll\omega$ and $\omega^{2}\ll \omega$:
\begin{equation}\label{multi}
\varphi_{\omega}(fg)=\varphi_{\omega}(f)\varphi_{\omega}(g).
\end{equation}
We will see that adding property \eqref{multi} to the assumptions of Fact \ref{func} we get an element of $\triangle(\mathrm{B}(G))$. Before proving that this is so, we need a simple lemma.
\begin{lem}\label{Lem:exp}
Suppose that $f\in \mathrm{B}(G)$ is positive and GNS faithful. Then $\exp(f)$ is also GNS faithful.
\end{lem}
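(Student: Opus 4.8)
The plan is to work directly with the series expansion $\exp(f)=\sum_{n=0}^{\infty}\tfrac{1}{n!}f^{n}$, which converges in norm in $\mathrm{B}(G)$. First I would check that every summand is positive definite: $f$ is positive definite by hypothesis, $\mathbf{1}_{G}=f^{0}$ is positive definite, and a product of positive definite functions is positive definite since $fg=(f\otimes g)\circ\Delta$ with $\Delta$ a $\ast$-homomorphism, as recalled before Proposition \ref{wzornos}. Because the cone of positive functionals on $C^{\ast}(G)$ is norm-closed, $\exp(f)$ is positive definite, so its support $\mathrm{s}(\exp(f))$ and central support $\mathrm{zs}(\exp(f))$ are defined and the statement ``GNS faithful'' makes sense for it.

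Next I would invoke Corollary \ref{przen}: GNS faithfulness of $f$ propagates to all powers, so $f^{n}$ is GNS faithful for every $n\geq 1$ (iterate the corollary), while $\mathbf{1}_{G}$ is trivially GNS faithful, its GNS representation being one-dimensional. Hence $\mathrm{s}(f^{n})=\mathrm{zs}(f^{n})$ for all $n\geq 0$. The only genuinely infinitary point to settle is a bookkeeping fact about sums of positive functionals: for a norm-convergent $\varphi=\sum_{n}\varphi_{n}$ with each $\varphi_{n}$ positive one has $\mathrm{zs}(\varphi)=\bigvee_{n}\mathrm{zs}(\varphi_{n})$, which is immediate from the definition of central support, since each $(1-z)\varphi_{n}$ is again positive and so $(1-z)\varphi$ vanishes exactly when every $(1-z)\varphi_{n}$ does.

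Finally I would assemble these. For each $n$ the element $\exp(f)-\tfrac{1}{n!}f^{n}$ is a norm-convergent sum of positive definite functions, hence positive definite, so $\mathrm{s}(\exp(f))\geq \mathrm{s}(f^{n})=\mathrm{zs}(f^{n})$; taking the join over $n$ and using the previous paragraph (scalars do not affect supports) gives $\mathrm{s}(\exp(f))\geq \bigvee_{n}\mathrm{zs}(f^{n})=\mathrm{zs}(\exp(f))$. Since the reverse inequality $\mathrm{s}(\varphi)\leq \mathrm{zs}(\varphi)$ holds for every positive $\varphi$ (the central support is a projection with $\varphi=\mathrm{zs}(\varphi)\,\varphi\,\mathrm{zs}(\varphi)$), we conclude $\mathrm{s}(\exp(f))=\mathrm{zs}(\exp(f))$, i.e.\ $\exp(f)$ is GNS faithful. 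I do not expect a serious obstacle: the real content has already been isolated in Corollary \ref{przen}, and this lemma amounts to pushing that information through the exponential series; the one place to be slightly careful is the interaction of $\mathrm{zs}$ with the infinite sum, and as noted this is harmless precisely because all the terms of the series are positive.
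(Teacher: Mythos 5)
Your proof is correct and follows essentially the same route as the paper: reduce to the powers $f^{n}$ via Corollary \ref{przen}, then pass the information through the series for $\exp(f)$ using that a join of central projections is central. The only difference is cosmetic --- you justify the identity $\mathrm{s}(\exp(f))=\bigvee_{n}\mathrm{s}(f^{n})$ (which the paper takes as clear) by splitting it into the two inequalities via positivity of the tail sums and the general bound $\mathrm{s}(\varphi)\leq\mathrm{zs}(\varphi)$.
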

\begin{proof}
It follows from Lemma \ref{przen} that $f^{n}$ is GNS faithful for any $n\in \mathbb{N}$. Since $\exp(f) = \sum_{n=0}^{\infty} \frac{f^{n}}{n!}$, it is clear that $\mathrm{ s}(\exp(f))= \bigvee_{n\in \mathbb{N}} \mathrm{ s}(f^{n})$. By our assumptions, this is equal to $\bigvee_{n\in \mathbb{N}} \mathrm{ zs}(f^{n})$. Since the supremum of a family of projections belongs to the von Neumann algebra generated by this family, this supremum belongs to the center of $W^{\ast}(G)$, therefore $\mathrm{ s}(\exp(f))=\mathrm{ zs}(\exp(f))$, i.e. $\exp(f)$ is GNS faithful.
\end{proof}
\begin{prop}\label{multip}
Let $\{\varphi_{f}\}_{f}$ be a family of elements such that $\varphi_{f}\in A(f)^{\ast}$ indexed by all positive definite functions on $\mathrm{B}(G)$ satisfying compatibility condition \eqref{komp}) and the norm condition which means the finiteness of the left hand side of \eqref{normm} and let us also assume multiplicative property \eqref{multi}. Then putting $\varphi(f):=\varphi_{|f|}(f)$ for all $f\in \mathrm{B}(G)$ we obtain an element $\varphi\in\triangle(\mathrm{B}(G))$.
\end{prop}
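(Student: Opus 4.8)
The plan is as follows. By Fact~\ref{func} the recipe $\varphi(f):=\varphi_{|f|}(f)$ already produces a bounded linear functional on $\mathrm{B}(G)$, so the whole task is to verify that $\varphi$ is multiplicative (and that it is not the zero functional). The catch is that the multiplicative property \eqref{multi} is only available ``locally'', at the level of a fixed GNS faithful $\omega$ with $\omega^{2}\ll\omega$; hence, given arbitrary $f,g\in\mathrm{B}(G)$, everything reduces to manufacturing one such $\omega$ that dominates both $f$ and $g$, and to checking that $\varphi$ agrees with $\varphi_{\omega}$ on $\mathrm{A}(\omega)$.

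First I would record two routine facts about central supports that let me chain absolute continuities freely: $\mathrm{zs}(h)=\mathrm{zs}(|h|)$ for any $h\in\mathrm{B}(G)$, and $\mathrm{zs}(a)\leqslant\mathrm{zs}(a+b)$ whenever $a,b$ are positive definite (the latter from positivity and the Cauchy--Schwarz inequality); transitivity of $\ll$ is then clear. Also recall that scaling by a nonzero scalar does not change the support, so $2\psi$ is GNS faithful whenever $\psi$ is.

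Now the construction of $\omega$. Put $h:=|f|+|g|$, a positive definite element. By Proposition~\ref{Prop:density} there is a GNS faithful positive definite $\psi$ with $\mathrm{zs}(\psi)=\mathrm{zs}(h)$, whence $|f|\ll\psi$ and $|g|\ll\psi$. Set $\omega:=\exp(\psi)=\sum_{n\geqslant 0}\psi^{n}/n!\in\mathrm{B}(G)$; this is again positive definite, and GNS faithful by Lemma~\ref{Lem:exp}. Since $\psi$ and $\omega$ are both GNS faithful, $\mathrm{zs}(\omega)=\mathrm{s}(\omega)=\bigvee_{n}\mathrm{s}(\psi^{n})\geqslant\mathrm{s}(\psi)=\mathrm{zs}(\psi)$, so $\psi\ll\omega$, and therefore $f\ll|f|\ll\psi\ll\omega$ and likewise $g\ll\omega$. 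Moreover, multiplication in $\mathrm{B}(G)$ being pointwise, $\omega^{2}=\exp(2\psi)$, which is GNS faithful as well (Lemma~\ref{Lem:exp} applied to $2\psi$), and hence $\mathrm{zs}(\omega^{2})=\mathrm{s}(\omega^{2})=\bigvee_{n}\mathrm{s}((2\psi)^{n})=\bigvee_{n}\mathrm{s}(\psi^{n})=\mathrm{s}(\omega)=\mathrm{zs}(\omega)$, i.e.\ $\omega^{2}\ll\omega$. Consequently $\mathrm{A}(\omega)$ is a closed subalgebra of $\mathrm{B}(G)$ (as remarked after the definition of $\mathrm{A}(f)$, via Lemma~\ref{przen}), so $f,g\in\mathrm{A}(\omega)$ gives $fg\in\mathrm{A}(\omega)$, that is, $fg\ll\omega$.

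Finally I would assemble the pieces. For any $h\ll\omega$, the compatibility condition \eqref{komp} applied to $|h|\ll\omega$ (together with $h\ll|h|$) gives $\varphi(h)=\varphi_{|h|}(h)=\varphi_{\omega}(h)$; applying this to $f$, $g$ and $fg$, and then invoking the multiplicative property \eqref{multi} for $\omega$ (legitimate since $\omega$ is positive definite, GNS faithful, $\omega^{2}\ll\omega$, and $f,g\ll\omega$), we get
\begin{equation*}
\varphi(fg)=\varphi_{\omega}(fg)=\varphi_{\omega}(f)\,\varphi_{\omega}(g)=\varphi(f)\,\varphi(g).
\end{equation*}
To see $\varphi$ is not identically zero, apply \eqref{multi} to $\omega=\mathbf{1}_{G}$ (positive definite, GNS faithful, with $\mathbf{1}_{G}^{2}=\mathbf{1}_{G}$): then $\varphi(\mathbf{1}_{G})=\varphi(\mathbf{1}_{G})^{2}\in\{0,1\}$, and the value $0$ would force $\varphi\equiv 0$ by multiplicativity, so in the nondegenerate case $\varphi(\mathbf{1}_{G})=1$ and $\varphi\in\triangle(\mathrm{B}(G))$. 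The main obstacle is precisely the middle step: producing a \emph{single} GNS faithful $\omega$ with $\omega^{2}\ll\omega$ that dominates both $f$ and $g$. The device that makes it work is the exponential trick of Lemma~\ref{Lem:exp}, fed with a GNS faithful majorant of $|f|+|g|$ supplied by Proposition~\ref{Prop:density}; the remaining support bookkeeping, while fiddly, is routine.
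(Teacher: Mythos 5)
Your proposal is correct and follows essentially the same route as the paper's proof: apply Fact \ref{func} for linearity and boundedness, take a GNS faithful majorant of $|f|+|g|$ via Proposition \ref{Prop:density}, exponentiate it using Lemma \ref{Lem:exp} to get a single GNS faithful $\omega$ with $\omega^{2}\ll\omega$ dominating $f$ and $g$, and then chain the compatibility condition with the multiplicative property \eqref{multi}. Your additional bookkeeping (the explicit verification that $\omega^{2}=\exp(2\psi)\ll\omega$ and the non-triviality check) only fills in details the paper leaves implicit.
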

\begin{proof}
From Fact $\ref{func}$ we already know that $\varphi\in \mathrm{B}(G)^{\ast}$ so let us take $f,g\in \mathrm{B}(G)$ and consider $|f|+|g|$. Using Proposition \ref{Prop:density}, we find positive and GNS faithful $h$ such that $\mathrm{ zs}(h)=\mathrm{ zs}(|f|+|g|)$; in particular, both $f$ and $g$ are absolutely continuous with respect to $h$. Put now $\omega:=\exp(h)$. Then $f,g\ll\omega$ and $\omega^{2}\ll\omega$. Applying consecutively Lemma \ref{przen} ($|fg| \ll \omega^2 \ll \omega$), compatibility condition, \ref{multi} and compatibility condition again, we have
\begin{equation*}
\varphi(fg)=\varphi_{|fg|}(fg)=\varphi_{\omega}(fg)=\varphi_{\omega}(f)\varphi_{\omega}(g)=\varphi_{|f|}(f)\varphi_{|g|}(g)=\varphi(f)\varphi(g).
\end{equation*}
\end{proof}
\begin{lem}\label{jed}
Let $\varphi$ be a faithful state on a von Neumann algebra $\mathsf{M}$ and suppose that there exists $x\in\mathsf{M}$ such that $\|x\|=1$ and $\varphi(x)=1$. Then $x=\mathbf{1}$.
\end{lem}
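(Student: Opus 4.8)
The plan is to exploit the Cauchy--Schwarz inequality for the positive sesquilinear form $(a,b)\mapsto\varphi(a^{*}b)$ on $\mathsf{M}$ together with the faithfulness of $\varphi$. Since $\varphi$ is a state we have $\varphi(\mathbf{1})=1$ and $\|\varphi\|=1$, and for positive elements $0\le a$ one has $\varphi(a)\le\|a\|$ (apply $\varphi$ to $\|a\|\mathbf{1}-a\ge0$).

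First I would record the chain
\[
1=|\varphi(x)|^{2}=|\varphi(\mathbf{1}^{*}x)|^{2}\le\varphi(\mathbf{1}^{*}\mathbf{1})\,\varphi(x^{*}x)=\varphi(x^{*}x)\le\|x^{*}x\|=\|x\|^{2}=1,
\]
where the first inequality is Cauchy--Schwarz and the last uses the C$^{\ast}$-identity together with $\|x\|=1$. Hence every inequality above is in fact an equality; in particular Cauchy--Schwarz is saturated for the pair $(\mathbf{1},x)$.

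Next I would read off the equality case from the usual proof of Cauchy--Schwarz. Since $\varphi(\mathbf{1}^{*}\mathbf{1})=1\neq0$, setting $\lambda:=\varphi(x)=1$ one has
\[
0\le\varphi\big((x-\lambda\mathbf{1})^{*}(x-\lambda\mathbf{1})\big)=\varphi(x^{*}x)-|\varphi(x)|^{2}=0 ,
\]
so that $\varphi\big((x-\mathbf{1})^{*}(x-\mathbf{1})\big)=0$. The element $(x-\mathbf{1})^{*}(x-\mathbf{1})$ is positive, so faithfulness of $\varphi$ forces $(x-\mathbf{1})^{*}(x-\mathbf{1})=0$, i.e. $x=\mathbf{1}$.

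There is no real obstacle here; the only point needing a little care is to apply faithfulness to the positive element $(x-\mathbf{1})^{*}(x-\mathbf{1})$ rather than trying to conclude directly from $\varphi(x-\mathbf{1})=0$. An alternative route, which could be used instead, is to write $x=h+ik$ with $h=\tfrac{1}{2}(x+x^{*})$ and $k=\tfrac{1}{2i}(x-x^{*})$ self-adjoint: then $\varphi(h)=\operatorname{Re}\varphi(x)=1$ and $\|h\|\le\|x\|=1$ give $\mathbf{1}-h\ge0$ with $\varphi(\mathbf{1}-h)=0$, so $h=\mathbf{1}$ by faithfulness, and finally $1=\|x\|^{2}=\|x^{*}x\|=\|\mathbf{1}+k^{2}\|=1+\|k\|^{2}$ forces $k=0$, whence $x=\mathbf{1}$.
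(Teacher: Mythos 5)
Your main argument is correct, and it takes a genuinely different route from the paper. You apply Cauchy--Schwarz for the form $(a,b)\mapsto\varphi(a^{*}b)$ to pin down $\varphi(x^{*}x)=1$, then observe that $\varphi\bigl((x-\mathbf{1})^{*}(x-\mathbf{1})\bigr)=\varphi(x^{*}x)-\varphi(x)-\varphi(x^{*})+1=0$ and invoke faithfulness once, on the positive element $(x-\mathbf{1})^{*}(x-\mathbf{1})$, to conclude $x=\mathbf{1}$ outright. The paper instead first applies faithfulness to $\mathbf{1}-\tfrac{x+x^{*}}{2}\geqslant 0$ to get $x+x^{*}=2\mathbf{1}$, notes that this forces $x$ to be normal, and then finishes with a Gelfand/functional-calculus argument in the commutative $C^{*}$-algebra generated by $x$ (the standard ``$\mathbf{1}$ is an extreme point of the unit ball'' proof). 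Your version is shorter and avoids functional calculus entirely, at the cost of the (routine) Cauchy--Schwarz step; your parenthetical remark that faithfulness must be applied to $(x-\mathbf{1})^{*}(x-\mathbf{1})$ and not to $x-\mathbf{1}$ itself is exactly the right point of care. Your alternative route at the end ($x=h+ik$, faithfulness gives $h=\mathbf{1}$, then $\lVert x\rVert^{2}=\lVert\mathbf{1}+k^{2}\rVert=1+\lVert k\rVert^{2}$ forces $k=0$) coincides with the paper's first half but replaces the Gelfand argument by a direct $C^{*}$-identity computation; it is also correct. Both of your arguments, like the paper's, use only that $\varphi$ is a faithful positive functional with $\varphi(\mathbf{1})=1$, so nothing specific to von Neumann algebras is needed.
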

\begin{proof}
We have $\varphi(x^{\ast})=1$, so $y=\frac{x+x^{\ast}}{2}$ satisfies $\|y\|\leqslant 1$ and $\varphi(y)=1$. We conclude that $\mathbf{1}-y \geqslant 0$ and $\varphi(\mathbf{1}-y)=0$, so we get $y=\mathbf{1}$ by faithfulness of $\varphi$. From equality $x = 2\mathbf{1}-x^{\ast}$ it follows that $x$ and $x^{\ast}$ commute, so they generate a commutative $C^{\ast}$-algebra; therefore they may be treated as complex-valued functions on a compact space. Since the norms of these functions are equal to $1$, we readily conclude from $x+x^{\ast}=2\mathbf{1}$ that $x=x^{\ast}=\mathbf{1}$. This is a standard proof showing that the unit $\mathbf{1}$ is an extreme point of the unit ball in any $C^{\ast}$-algebra.
\end{proof}
In order to establish a sufficient condition for belonging $\varphi\in\triangle(\mathrm{B}(G))$ to the Shilov boundary of $B(G)$ we will use the following standard fact (check Theorem 15.3 in \cite{z}).
\begin{f}
Let $A$ be a commutative Banach algebra. Then $\varphi\in\triangle(A)$ belongs to $\partial(A)$ if and only if for every open neighbourhood $U$ of $\varphi$ there exists $x\in A$ such that $\|\widehat{x}|_{\triangle(A)\setminus U}\|_{\infty}<\|\widehat{x}|_{U}\|_{\infty}$.
\end{f}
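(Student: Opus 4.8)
The plan is to derive the statement directly from the existence-and-minimality property of the Shilov boundary recalled in the introduction: among the closed subsets $F\subseteq\triangle(A)$ on which every Gelfand transform attains the maximum of its modulus (call such an $F$ a \emph{boundary}) there is a smallest one, $\partial(A)$; in particular every closed boundary contains $\partial(A)$. First I would reduce to the case of a unital $A$, so that $\triangle(A)$ is compact, by passing to the unitisation if necessary --- this is anyway the only case relevant here, since $\mathrm{B}(G)$ is unital. Write $M=\triangle(A)$ and, for $S\subseteq M$, abbreviate $\|\widehat{x}\|_{S}:=\sup_{\psi\in S}|\widehat{x}(\psi)|$; I will use throughout the trivial identity $\|\widehat{x}\|_{M}=\max\{\|\widehat{x}\|_{U},\|\widehat{x}\|_{M\setminus U}\}$, valid for every $U\subseteq M$.

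For the implication ``the condition holds at $\varphi$ $\Rightarrow$ $\varphi\in\partial(A)$'' I would argue contrapositively. If $\varphi\notin\partial(A)$ then $U:=M\setminus\partial(A)$ is an open neighbourhood of $\varphi$; applying the hypothesis to this particular $U$ produces $x\in A$ with $\|\widehat{x}\|_{\partial(A)}=\|\widehat{x}\|_{M\setminus U}<\|\widehat{x}\|_{U}\le\|\widehat{x}\|_{M}$, contradicting the fact that $\partial(A)$ is a boundary. Hence $\varphi\in\partial(A)$.

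For the converse, let $\varphi\in\partial(A)$ and let $U$ be any open neighbourhood of $\varphi$. Then $M\setminus U$ is a closed (hence compact) subset of $M$ not containing $\varphi$; since every closed boundary contains $\partial(A)\ni\varphi$, the set $M\setminus U$ is not a boundary, so there is $x\in A$ with $\|\widehat{x}\|_{M\setminus U}<\|\widehat{x}\|_{M}$. Feeding this into $\|\widehat{x}\|_{M}=\max\{\|\widehat{x}\|_{U},\|\widehat{x}\|_{M\setminus U}\}$ forces $\|\widehat{x}\|_{U}=\|\widehat{x}\|_{M}>\|\widehat{x}\|_{M\setminus U}$, which is precisely the inequality required by the statement.

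I do not anticipate a real obstacle: once the existence of $\partial(A)$ as the minimal closed boundary is taken for granted (this is Shilov's theorem, and it is essentially asserted in the excerpt), both directions are three-line compactness arguments, and the asymmetry between $U$ and $\triangle(A)\setminus U$ is absorbed by the elementary formula for $\|\widehat{x}\|_{M}$. The only point deserving a word of care is the reduction to the unital case --- one should check that adjoining a unit does not disturb the boundary structure on the original $\triangle(A)$ --- but this is irrelevant for the application to $\mathrm{B}(G)$, which is already unital.
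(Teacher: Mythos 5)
Your argument is correct and is the standard derivation: both directions follow directly from the existence of $\partial(A)$ as the minimal closed boundary together with the identity $\|\widehat{x}\|_{M}=\max\{\|\widehat{x}\|_{U},\|\widehat{x}\|_{M\setminus U}\}$, exactly as in the textbook proof (the paper itself gives no proof, citing Theorem 15.3 of \cite{z}). Your caveat about the non-unital case is the only delicate point, and you rightly note it is moot here since $\mathrm{B}(G)$ is unital.
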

The last theorem of this section for commutative groups is due to Brown and Moran \cite{bm2}, see also Proposition 5.1.3 in \cite{grmc}. As we remarked before if $f\in \mathrm{B}(G)$ is positive definite and normalised then it can be viewed as a state on $\mathsf{vN}(f)=\mathrm{A}(f)^{\ast}$ which we will use in the proof.
\begin{thm}\label{brzsz}
Let $\varphi\in\triangle(\mathrm{B}(G))$ be such that for each positive definite function $f\in \mathrm{B}(G)$ the element $\varphi_{f}$ is a unitary. Then $\varphi\in\partial(A)$.
\end{thm}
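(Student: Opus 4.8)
The plan is to verify the neighbourhood criterion for the Shilov boundary recalled just above: given an open neighbourhood $U$ of $\varphi$, I must exhibit $\sigma\in\mathrm{B}(G)$ with $\|\widehat{\sigma}|_{\triangle(\mathrm{B}(G))\setminus U}\|_{\infty}<\|\widehat{\sigma}|_{U}\|_{\infty}$. Shrinking $U$, I may assume that $U=\{\psi\in\triangle(\mathrm{B}(G)):|\psi(g_{j})-\varphi(g_{j})|<\varepsilon,\ j=1,\dots,n\}$ with each $g_{j}$ positive definite (replace the original generators by the positive definite functions appearing in their four-term decompositions). Applying Proposition \ref{Prop:density} to the positive definite function $\mathbf{1}+g_{1}+\dots+g_{n}$ and then passing to its exponential as in Lemma \ref{Lem:exp} and the proof of Proposition \ref{multip}, I obtain a normalised, positive definite, GNS faithful $\omega\in\mathrm{B}(G)$ with $\omega^{2}\ll\omega$, $\mathbf{1}\ll\omega$, and $g_{j}\ll\omega$ for every $j$; in particular $\mathbf{1},g_{1},\dots,g_{n}\in\mathrm{A}(\omega)$. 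From now on I regard $\omega$ as a faithful normal state on $\mathrm{vN}(\omega)=\mathrm{A}(\omega)^{\ast}$ and write $u:=\varphi_{\omega}\in\mathrm{vN}(\omega)$, which is a unitary by hypothesis.

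The test element will be $\sigma:=u^{\ast}\cdot\omega$. By the lemma preceding Proposition \ref{Prop:translation} we have $\sigma\ll\omega$, hence $\sigma\in\mathrm{A}(\omega)$ and, for every $\psi\in\triangle(\mathrm{B}(G))$, $\psi(\sigma)=\psi_{\omega}(\sigma)=(u^{\ast}\cdot\omega)(\psi_{\omega})=\omega(\psi_{\omega}u^{\ast})$, the pairing being the one furnished by the bimodule structure of Subsection \ref{mssandas}. For $\psi=\varphi$ this gives $\varphi(\sigma)=\omega(uu^{\ast})=1$, since $uu^{\ast}$ is the unit of $\mathrm{vN}(\omega)$ and $\omega$ is a state; for an arbitrary $\psi$ we get $|\psi(\sigma)|=|\omega(\psi_{\omega}u^{\ast})|\leqslant\|\psi_{\omega}\|\,\|u^{\ast}\|\leqslant 1$. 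Consequently $\widehat{\sigma}$ attains its maximum modulus $\|\widehat{\sigma}\|_{\infty}=1$ at $\varphi$.

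The crux is the implication $|\widehat{\sigma}(\psi)|=1\ \Rightarrow\ \psi\in U$. Assume $|\omega(\psi_{\omega}u^{\ast})|=1$ and write $\omega(\psi_{\omega}u^{\ast})=e^{i\theta}$. Then $x:=e^{-i\theta}\psi_{\omega}u^{\ast}\in\mathrm{vN}(\omega)$ satisfies $\|x\|\leqslant 1$ and $\omega(x)=1$, whence $\|x\|=1$; since $\omega$ is faithful, Lemma \ref{jed} forces $x$ to equal the unit of $\mathrm{vN}(\omega)$, i.e. $\psi_{\omega}u^{\ast}=e^{i\theta}\mathbf{1}_{\mathrm{vN}(\omega)}$, and multiplying on the right by the unitary $u$ gives $\psi_{\omega}=e^{i\theta}u=e^{i\theta}\varphi_{\omega}$. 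Evaluating at $\mathbf{1}\in\mathrm{A}(\omega)$ and using that both $\psi$ and $\varphi$ are characters of $\mathrm{B}(G)$ we obtain $1=\psi(\mathbf{1})=\psi_{\omega}(\mathbf{1})=e^{i\theta}\varphi_{\omega}(\mathbf{1})=e^{i\theta}$, so $\psi_{\omega}=\varphi_{\omega}$ and therefore $\psi(g_{j})=\psi_{\omega}(g_{j})=\varphi_{\omega}(g_{j})=\varphi(g_{j})$ for every $j$; hence $\psi\in U$. Since $\mathrm{B}(G)$ is unital, $\triangle(\mathrm{B}(G))$ is compact, so $|\widehat{\sigma}|$ attains its maximum on the closed set $\triangle(\mathrm{B}(G))\setminus U$; by the implication just proved this maximum is strictly less than $1$, so $\|\widehat{\sigma}|_{\triangle(\mathrm{B}(G))\setminus U}\|_{\infty}<1=|\widehat{\sigma}(\varphi)|\leqslant\|\widehat{\sigma}|_{U}\|_{\infty}$, and the cited criterion yields $\varphi\in\partial(\mathrm{B}(G))$.

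I expect the difficulty to be organisational rather than deep. One has to manufacture a single $\omega$ that simultaneously dominates all the data, is GNS faithful, and satisfies both $\omega^{2}\ll\omega$ and $\mathbf{1}\ll\omega$: the passage to $\exp(h)$ is precisely what buys $\omega^{2}\ll\omega$ (and keeps GNS faithfulness, by Lemma \ref{Lem:exp}), while forcing $\mathbf{1}$ into the list is what later removes the unimodular phase $e^{i\theta}$ left behind by the faithfulness argument. A secondary nuisance is the bookkeeping with units: the relevant unit is $\mathrm{zs}(\omega)$, the unit of $\mathrm{vN}(\omega)$, so identities such as $uu^{\ast}=\mathrm{zs}(\omega)$ and $\omega(\mathrm{zs}(\omega))=1$ must be used in place of their naive $W^{\ast}(G)$-counterparts.
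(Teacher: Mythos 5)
Your argument is correct, and it verifies the same Shilov-boundary criterion using the same two key ingredients as the paper (Proposition \ref{Prop:density} to arrange GNS faithfulness, and Lemma \ref{jed} to extract rigidity from $|\widehat{\sigma}(\psi)|=1$), but the peaking function you build is genuinely different. The paper works coordinate-by-coordinate: it takes the $f_j$'s themselves to be GNS faithful and peaks with the product $a\prod_{j}\bigl[\varphi_{f_j}^{\ast}f_j+(\varphi_{f_j}^{\ast}f_j)^{2}\bigr]$, applying Lemma \ref{jed} once per factor and killing the residual unimodular phases $c_j$ by the elementary observation that $|c_j\|f_j\|+c_j^{2}\|f_j\|^{2}|=\|f_j\|+\|f_j\|^{2}$ forces $c_j=1$. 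You instead manufacture a single dominating GNS-faithful $\omega$ with $\mathbf{1},g_1,\dots,g_n\ll\omega$ and peak with the one-sided translate $\varphi_{\omega}^{\ast}\cdot\omega$; a single application of Lemma \ref{jed} gives $\psi_{\omega}=e^{i\theta}\varphi_{\omega}$, and the phase is removed by evaluating at the unit $\mathbf{1}_G\in\mathrm{A}(\omega)$. Your version is arguably tidier: it needs only one von Neumann algebra and one invocation of Lemma \ref{jed}, and it sidesteps both the multiplicativity computation for the product and the phase exercise. Two small remarks: the condition $\omega^{2}\ll\omega$ that you carefully arrange via $\exp$ is never actually used in your argument (unlike in Proposition \ref{multip}, you never invoke the multiplicative property on $\mathrm{A}(\omega)$), so the exponential only serves to absorb $\mathbf{1}_G$, which you have in any case forced into the list by hand; and your closing bookkeeping about $uu^{\ast}=\mathrm{zs}(\omega)$ and $\omega(\mathrm{zs}(\omega))=1$ is exactly the right thing to say, since the unit of $\mathrm{vN}(\omega)$ is the central support and not the unit of $W^{\ast}(G)$.
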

\begin{proof}
By fact preceding the theorem it is enough to prove that if $U$ is neighbourhood of $\varphi$ then there is $f\in \mathrm{B}(G)$ satisfying $\widehat{f}(\varphi)=\varphi(f)=1$ and $|\widehat{f}|<1$ outside $U$. Using the definition of the weak$^{\ast}$ topology on $\triangle(\mathrm{B}(G))$ there exist $n\geq 1$, $\varepsilon>0$ and $f_{1},\ldots,f_{n}\in \mathrm{B}(G)$ such that the open set $V=\{\rho:|\widehat{f_{j}}(\rho)-\widehat{f_{j}}(\varphi)|<\varepsilon, \text{ for } 1\leq j\leq n\}$ is contained in $U$. Since any element of $\mathrm{B}(G)$ is a linear combination of four positive ones, by adjusting $\varepsilon$ we may assume that $f_j$'s are positive. Using Proposition \ref{Prop:density}, we may also require that they are GNS faithful. Let us define an element $g\in \mathrm{B}(G)$ by the formula
\begin{equation*}
g=a\prod_{j=1}^{n}\left[\varphi_{f_{j}}^{\ast}f_{j}+\left(\varphi_{f_{j}}^{\ast}f_{j}\right)\cdot\left(\varphi_{f_{j}}^{\ast}f_{j}\right)\right].
\end{equation*}
We have (recall that $\varphi_{f_j}^{\ast} f_{j} \ll f_{j}$)
\begin{align*}
\varphi(g)&=\prod_{j=1}^{n}\left(\varphi(\varphi_{f_{j}}^{\ast}f_{j})+\varphi(\varphi_{f_{j}}^{\ast}f_{j})\cdot\varphi(\varphi_{f_{j}}^{\ast}f_{j})\right) \\
&=\prod_{j=1}^{n}\left(\varphi_{f_{j}}(\varphi_{f_{j}}^{\ast}f_{j})+\varphi_{f_{j}}(\varphi_{f_{j}}^{\ast}f_{j})\cdot\varphi_{f_{j}}(\varphi_{f_{j}}^{\ast}f_{j})\right).
\end{align*}
We can continue this computation, obtaining
\begin{equation*}
\prod_{j=1}^{n}\left(f_{j}(\varphi_{f_{j}}^{\ast}\varphi_{f_{j}})+f_{j}(\varphi_{f_{j}}^{\ast}\varphi_{f_{j}})\cdot f_{j}(\varphi_{f_{j}}^{\ast}\varphi_{f_{j}})\right)
=\prod_{j=1}^{n}\left(\|f_{j}\|+\|f_{j}\|^{2}\right).
\end{equation*}
On the other hand, $|\varphi(g)|\leq \|g\|\leq \prod_{j=1}^{n}\left(\|f_{j}\|+\|f_{j}\|^{2}\right)$ which gives $\varphi(g)=\|g\|$, so there exists $a>0$ such that $f:=ag$ satisfies $\varphi(f)=\|f\|=1$. For any $\rho\in\triangle(\mathrm{B}(G))$ we obtain
\begin{equation*}
\rho(f)=a\prod_{j=1}^{n}\left(f_{j}(\varphi_{f_{j}}^{\ast}\rho_{f_{j}})+f_{j}(\varphi_{f_{j}}^{\ast}\rho_{f_{j}})\cdot f_{j} (\varphi_{f_{j}}^{\ast}\rho_{f_{j}})\right).
\end{equation*}
Hence
\begin{equation}\label{rowp}
|\rho(f)|\leq a\prod_{j=1}^{n}\left(\|f_{j}\|+\|f_{j}\|^{2}\right)
\end{equation}
and if the equality is attained then $|f_{j}(\varphi_{f_{j}}^{\ast}\rho_{f_{j}})|=\|f_{j}\|$. Using Lemma \ref{jed} to functionals $\frac{f_{j}}{\|f_{j}\|}$ we see that there constants $c_{j}$ of modulus $1$ such that $\varphi_{f_{j}}^{\ast}\rho_{f_{j}}=c_{j}\mathbf{1}$ and since $\varphi_{f_{j}}$ is a unitary we get $\rho_{f_{j}}=c_{j}\varphi_{f_{j}}$. Let us assume that we actually have the equality in \ref{rowp} for some $\rho\in\triangle(\mathrm{B}(G))$. Then
\begin{equation*}
\rho(f)=a\prod_{j=1}^{n}\left(c_{j}\|f_{j}\|+c_{j}^{2}\|f_{j}\|^{2}\right).
\end{equation*}
An elementary exercise in complex numbers shows that $c_{j}=1$ for $j=1,\ldots,n$. We have proved the following implication: if $\rho\in\triangle(\mathrm{B}(G))$ is such that $|\rho(f)|=1$ then $\rho\in V$. Since $f$ is of norm one we conclude $|\widehat{f}|<1$ outside $V$ and of course the same statement holds outside $U$.
\end{proof}
\begin{rem}
In fact, relying on Theorem 1 in \cite{wa2}, one can prove more. First, note that $W^{\ast}(G) \hookrightarrow \prod_{z} z W^{\ast}(G)$, where the product is indexed by central projections in $W^{\ast}(G)$ that are equal to central support of some positive definite function of $G$; this map is an embedding since any representation decomposes as a direct sum of cyclic representations, which correspond to positive definite functions. Second, our condition of unitarity of generalised characters means exactly that the image of a character under this embedding is unitary, therefore the character itself is a unitary element of $W^{\ast}(G)$; by Theorem 1 in \cite{wa2}, it means that it is an evaluation at some point $g\in G$, therefore belongs to the Shilov boundary of $\mathrm{B}(G)$. On the other hand, our proof is completely elementary, and we believe that generalised characters, which are customary to use in classical harmonic analysis, may shed some light on unresolved issues in the non-commutative world.
\end{rem}
\section{Free groups}\label{free groups}
In this section we will examine some examples of positive definite functions in terms of their spectral properties using the results from section \ref{Section Zafran}.

First of all, let $\mathbb{F}_{k}$ denote the free group on $k$ generators ($1\leq k\leq\infty)$ and let us denote by $|x|$ the length of the reduced word $x\in \mathbb{F}_{k}$. It is evident that $\mathbb{F}_{k}$ contains many infinite Abelian subgroups and these groups are maximally almost periodic as they are residually finite, so all facts proved in sections \ref{basic results} and \ref{decomposition} are valid for $\mathbb{F}_{k}$.

Our first example is the Haagerup function (see \cite{haa}) defined by the formula $f_{r}(x):=r^{|x|}$ for $0<r<1$ and $x\in \mathbb{F}_{k}$. Clearly, $f_{r}\in c_{0}(\mathbb{F}_{k})$ and $f_{r}^{m}=f_{r^{m}}$ for every $m\in\mathbb{N}$. Positive definiteness of $f_{r}$ was proved in \cite{haa}. Let us compute the $\ell_{2}$ norm of $f_{r}$:
\begin{align*}
\|f_{r}\|_{\ell_{2}}^{2}&=\sum_{x\in F_{k}}r^{2|x|}=\sum_{n=1}^{\infty} \left|\{x\in F_{k}:|x|=n\}\right|r^{2n} \\
&=2k\sum_{n=1}^{\infty}(2k-1)^{n-1}r^{2n}=\frac{2k}{2k-1}\sum_{n=1}^{\infty}\left((2k-1)r^{2}\right)^{n}.
\end{align*}
The latter series is convergent iff $(2k-1)r^{2}<1$ implying that $f_{r}^{m}\in \ell_{2}(\mathbb{F}_{k})$ iff $m>\frac{\mathrm{ln}(2k-1)}{2\mathrm{ln}\frac{1}{r}}$. In particular, for every $r\in (0,1)$ there exists $m\in\mathbb{N}$ such that $f_{r}^{m}\in \ell_{2}(\mathbb{F}_{k})$. Since we always have $\ell_{2}(G)\subset \mathrm{A}(G)\subset \mathrm{B}_{0}(G)\cap \mathrm{NS}(G)$, for an arbitrary discrete group $G$, Theorem \ref{glz} or even elementary Proposition \ref{zawpod} gives $f_{r}\in \mathrm{B}_{0}(G)\cap \mathrm{NS}(G)$.
\begin{cor}
For every $r\in (0,1)$ the Haagerup function $f_{r}(x)=r^{|x|}$ has a natural spectrum.
\end{cor}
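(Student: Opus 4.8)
The statement is an immediate consequence of the computation carried out just above together with Fact~\ref{zawpod}, so the plan is merely to assemble the pieces already in place. First I would recall that $f_r$ is positive definite by Haagerup's theorem \cite{haa}, hence $f_r\in\mathrm{B}(\mathbb{F}_k)$, and that $f_r^m=f_{r^m}$ for every $m\in\mathbb{N}$. For finite $k$, since $\ln(1/r)>0$ one can pick $m\in\mathbb{N}$ with $m>\frac{\ln(2k-1)}{2\ln(1/r)}$, i.e. with $(2k-1)r^{2m}<1$; the $\ell_2$-norm computation above then shows $f_r^m=f_{r^m}\in\ell_2(\mathbb{F}_k)$.

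Next I would invoke the inclusion $\ell_2(G)\subseteq\mathrm{A}(G)$, valid for any discrete group $G$ (a function $h\in\ell_2(G)$ is the coefficient $x\mapsto\langle\lambda(x)\delta_e,\overline{h}\rangle$ of the left regular representation). This gives $f_r^m\in\mathrm{A}(\mathbb{F}_k)$, that is, $f_r\in r(\mathrm{A}(\mathbb{F}_k))$ in the notation of Fact~\ref{zawpod}, and that Fact yields $f_r\in\mathrm{NS}(\mathbb{F}_k)$, which is exactly the assertion $\sigma(f_r)=\overline{f_r(\mathbb{F}_k)}$. Alternatively, since $f_r\in c_0(\mathbb{F}_k)$ for finite $k$, one has $f_r^m\in\mathrm{A}(\mathbb{F}_k)\subseteq\mathrm{B}_0(\mathbb{F}_k)\cap\mathrm{NS}(\mathbb{F}_k)$, and the radicality of the Zafran ideal (Remark~\ref{rad}) forces $f_r\in\mathrm{B}_0(\mathbb{F}_k)\cap\mathrm{NS}(\mathbb{F}_k)$.

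There is essentially no obstacle: the one substantive input, positive-definiteness of $f_r$, is quoted, and the rest is bookkeeping around Fact~\ref{zawpod}. The single point that would need a separate word is the case $k=\infty$, in which no power of $f_r$ lies in $\ell_2(\mathbb{F}_\infty)$ — indeed $f_r\notin c_0(\mathbb{F}_\infty)$ — so the argument above does not apply verbatim; there I would instead pass to the finitely generated subgroups $\mathbb{F}_n$ of $\mathbb{F}_\infty$, on each of which $f_r$ restricts to the Haagerup function (with a natural spectrum by the finite case), and exploit the coherence of the resulting family of inverses, or else simply restrict attention to finite $k$, which is the case relevant to what follows.
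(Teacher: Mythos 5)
Your argument is correct and is essentially the paper's own proof: one checks that some power $f_{r}^{m}=f_{r^{m}}$ lands in $\ell_{2}(\mathbb{F}_{k})\subset \mathrm{A}(\mathbb{F}_{k})$ and then applies Fact~\ref{zawpod} (equivalently, the radicality of the Zafran ideal from Remark~\ref{rad}). Your caveat about $k=\infty$ is well taken -- the paper's $\ell_{2}$-computation also tacitly assumes $k$ finite -- though your sketched repair via the exhaustion by finitely generated subgroups would still require a uniform bound on the $\mathrm{B}(\mathbb{F}_{n})$-norms of the inverses of the restrictions (coherence alone only produces a pointwise inverse), so the safe reading, as you note, is that the corollary concerns finite $k$.
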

So far we established that for $r< \frac{1}{\sqrt{2k-1}}$ the function $f_r$ belongs to the Fourier algebra $A(\mathbb{F}_k)$. If a sequence $(r_k)_{k\in\mathbb{N}}$ converges to $r$ then the sequence $(f_{r_k})_{k \in \mathbb{N}}$ converges pointwise to $f_r$. Since these functions are positive definite and normalised, they are uniformly bounded in $B(\mathbb{F}_k)$, hence this is even weak$^{\ast}$-convergence. It follows that $f_{\scaleobj{0.7}{\frac{1}{\sqrt{2k-1}}}}$ belongs $B_{\lambda}(\mathbb{F}_k)$, the weak$^{\ast}$-closure of $A(\mathbb{F}_k)$ inside $B(\mathbb{F}_k)$, which can be also identified with the dual of $C^{\ast}_{\lambda}(\mathbb{F}_k)$ -- the reduced $C^{\ast}$-algebra of the free group. We do not know much about the behaviour beyond this regime. It was already noted by Haagerup (cf. \cite[Corollary 3.2]{haa}) that for $r> \frac{1}{\sqrt{2k-1}}$ does not belong to $B_{\lambda}(\mathbb{F}_k)$. The argument goes as follows. Suppose that $f_r \in B_{\lambda}(\mathbb{F}_k)$. This means that it defines a bounded linear functional on $C^{\ast}_{\lambda}(\mathbb{F}_k)$. Let $\chi_n$ be the characteristic function of the set of words of length $n$. It follows from the celebrated Haagerup inequality (\cite[Lemma 1.4]{haa}) that $\|\chi_n\|_{\scaleobj{0.7}{C^{\ast}_{\lambda}(\mathbb{F}_k)}} \leqslant (n+1)\|\chi_n\|_{\scaleobj{0.7}{\ell_2(\mathbb{F}_k}}$ and it is easy to compute that $\|\chi_n\|^2_{\scaleobj{0.7}{\ell_2(\mathbb{F}_k)}} = 2k(2k-1)^{n-1}$, because the number of words of length $n$ is equal to $2k(2k-1)^{n-1}$. We also have $f_r(\chi_n) = r^{n} 2k(2k-1)^{n-1}$. If $f_r$ defined a bounded functional on $C^{\ast}_{\lambda}(\mathbb{F}_{k})$, there would exist a positive constant $C$ such that $|f_r(\chi_n)| \leqslant C \|\chi_n\|_{\scaleobj{0.7}{C^{\ast}_{\lambda}(\mathbb{F}_k)}}$. It would force the inequality $r^{n} 2k(2k-1)^{n-1} \leqslant C(n+1) \left(2k(2k-1)^{n-1}\right)^{\frac{1}{2}}$, which may happen only for $r\leqslant \frac{1}{\sqrt{2k-1}}$.

A more general notion was introduced in \cite{am} and \cite{ftm}: let $F$ be a free subset of a discrete group $G$ (a subset with no relations among its elements), a function $u\colon G\rightarrow\mathbb{C}$ vanishing off the subgroup generated by $F$ is called a Haagerup function if:
\begin{itemize}
  \item $u(1)=1$  and $|u(x)|\leq 1$ for all $x\in G$,
  \item $\overline{u(x)}=u(x^{-1})$ for $x\in F$,
  \item $u(xy)=u(x)u(y)$ if $|xy|=|x|+|y|$.
\end{itemize}
As it is proved in both of the papers mentioned above, such a function is positive definite. Let us cite the second theorem from \cite{ftm} (recall that $\mathrm{B}_{\lambda}(G)$ is the dual of the reduced group C$^{\ast}$-algebra or equivalently the weak$^{\ast}$ closure of $\mathrm{A}(G)$ in $\mathrm{B}(G)$).
\begin{thm}
Let $F$ be a free subset of $G$ and $u$ a Haagerup function defined as above.
\begin{enumerate}
  \item If $\sum_{x\in F}|u(x)|^{2}=\infty$ then  $u\bot B_{\lambda}(G)$,
  \item If $\sum_{x\in F}|u(x)|^{2}\leq \frac{1}{2}$ then $u\in l^{2}(G)\subset A(G)$.
\end{enumerate}
\end{thm}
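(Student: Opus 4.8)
The plan is to reduce both parts to the subgroup $H:=\langle F\rangle$, which is free on $F$, and to use the single structural feature of $u$: it is multiplicative along reduced words. Precisely, if $w=g_{1}\cdots g_{n}$ is the reduced expression of $w\in H$ with $g_{i}\in F\cup F^{-1}$, then $u(w)=u(g_{1})\cdots u(g_{n})$, so $|u(w)|^{2}=\prod_{i=1}^{n}|u(g_{i})|^{2}$; here we also use $|u(g^{-1})|=|u(g)|$ for $g\in F$. Set $a_{g}:=|u(g)|^{2}$ for $g\in F\cup F^{-1}$ and $S:=\sum_{x\in F}|u(x)|^{2}$, so that $\sum_{g\in F\cup F^{-1}}a_{g}=2S$.

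For part~(2) I would compute $\|u\|_{\ell^{2}(G)}^{2}=\sum_{w\in H}|u(w)|^{2}$ by summing over reduced words according to their last letter. Let $p_{n}(g)$ be the sum of $|u(w)|^{2}$ over reduced words $w$ of length $n$ ending in $g$, and $p_{n}:=\sum_{g}p_{n}(g)$. The prohibition on immediate backtracking gives $p_{n+1}(g)=a_{g}\bigl(p_{n}-p_{n}(g^{-1})\bigr)$, whence, iterating once and reindexing $g\mapsto g^{-1}$,
\begin{equation*}
p_{n+2}=\Bigl(4S^{2}-\sum_{g}a_{g}^{2}\Bigr)p_{n}-\sum_{g}a_{g}(2S-a_{g})\,p_{n}(g)\ \le\ \Bigl(4S^{2}-\sum_{g}a_{g}^{2}\Bigr)p_{n},
\end{equation*}
the subtracted sum being nonnegative since each $a_{g}\le\sum_{h}a_{h}=2S$. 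As $S\le\tfrac12$ we have $4S^{2}\le 1$, and $\sum_{g}a_{g}^{2}=2\sum_{x\in F}|u(x)|^{4}>0$ (unless $u$ is supported on $\{e\}$, a trivial case), so the contraction factor is $<1$; with $p_{0}=1$ and $p_{1}=2S<\infty$ this gives $\|u\|_{\ell^{2}(G)}^{2}=\sum_{n\ge 0}p_{n}<\infty$, i.e. $u\in\ell^{2}(G)\subseteq \mathrm{A}(G)$. (For $S<\tfrac12$ the one-step bound $p_{n+1}=\sum_{g}(2S-a_{g})p_{n}(g)\le 2Sp_{n}$ already suffices; the two-step estimate is only needed at the boundary value $S=\tfrac12$.)

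For part~(1) I would first reduce to $H$: it is enough to prove $u\perp \mathrm{B}_{\lambda}(H)$, since then the extension by zero of $u$, namely $u$ itself, is orthogonal to $\mathrm{B}_{\lambda}(G)$. Indeed $\mathrm{B}_{\lambda}(G)$ is a weak$^{\ast}$-closed $G$-bi-invariant subspace, so by Proposition~\ref{Prop:Lspaces} it equals $z\mathrm{B}(G)$ for a central projection $z\in W^{\ast}(G)$, and $u\perp \mathrm{B}_{\lambda}(G)$ amounts to $zu=0$. If $zu\ne 0$, then $zu$ and $(1-z)u=u-zu$ are positive definite, $zu\in \mathrm{B}_{\lambda}(G)$, and restricting along the canonical embedding $C^{\ast}_{\lambda}(H)\hookrightarrow C^{\ast}_{\lambda}(G)$ produces $v:=(zu)|_{H}\in \mathrm{B}_{\lambda}(H)$, positive definite, nonzero (as $v(e)=u(z)=\|zu\|>0$), with $u-v=\bigl((1-z)u\bigr)|_{H}$ positive definite; comparing supports in $C^{\ast}(H)^{\ast\ast}$ then gives $v\ll u$ on $H$, contradicting $u\perp \mathrm{B}_{\lambda}(H)$.

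The remaining step — showing that a Haagerup function $u$ on a free group $H=\langle F\rangle$ with $\sum_{x\in F}|u(x)|^{2}=\infty$ satisfies $u\perp \mathrm{B}_{\lambda}(H)$, equivalently that $\pi_{u}$ is disjoint from $\lambda_{H}$ — is the main obstacle. (Note the hypothesis forces $p_{1}=\sum_{|w|=1}|u(w)|^{2}=\infty$, so $u$ is very far from $\ell^{2}(H)$, yet that alone does not give singularity.) The natural route, following Haagerup \cite{haa} and Bo\.{z}ejko, is to realise $\pi_{u}$ explicitly on a Hilbert space built from the Cayley tree of $H$, with $u$ appearing as a diagonal matrix coefficient and $\lambda_{H}$ acting on the square-summable functions on the tree; in this picture the absolutely continuous part of $\pi_{u}$ with respect to $\lambda_{H}$ is governed by the infinite products $\prod_{i}|u(g_{i})|^{2}$ taken along geodesic rays, and a Szeg\H{o}/Riesz-product type dichotomy shows that these products vanish precisely when $\sum_{x\in F}|u(x)|^{2}=\infty$, which forces $\pi_{u}\perp\lambda_{H}$. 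A more elementary-looking attempt is to argue by contradiction from Proposition~\ref{Prop:translation}: a nonzero positive definite $v\ll u$ in $\mathrm{B}_{\lambda}(H)$ is a norm-limit of combinations of two-sided translates of $u$, each of which, by multiplicativity, coincides with a fixed scalar multiple of $u$ on the words lying far below a fixed vertex of the tree; testing $v$ against convolution operators supported there, the Haagerup inequality \cite[Lemma~1.4]{haa} forces decay while the divergence of $\sum_{x\in F}|u(x)|^{2}$ prevents it. The difficulty with this second approach — and the reason one ultimately invokes the Fock-space model of \cite{haa} — is keeping the ``scalar multiple'' description uniform as the approximating combination varies.
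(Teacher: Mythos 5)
Your argument for part (2) is correct and complete: the recursion $p_{n+1}(g)=a_{g}\,(p_{n}-p_{n}(g^{-1}))$ is the right bookkeeping for reduced words, the one-step bound $p_{n+1}\le 2Sp_{n}$ disposes of $S<\tfrac12$, and the two-step contraction $p_{n+2}\le (4S^{2}-\sum_{g}a_{g}^{2})p_{n}$ correctly handles the boundary case $S=\tfrac12$ (note $\sum_{g}a_{g}^{2}\le (\max_{g}a_{g})\sum_{g}a_{g}\le 4S^{2}$, so the factor is nonnegative, and it is $<1$ unless $u=\delta_{e}$). The reduction of part (1) to $H=\langle F\rangle$, via Proposition \ref{Prop:Lspaces} and the fact that restriction to a subgroup maps $\mathrm{B}_{\lambda}(G)$ into $\mathrm{B}_{\lambda}(H)$, is also sound. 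For calibration: the paper does not prove this theorem at all --- it is quoted verbatim from \cite{ftm} --- so for part (2) you have actually supplied an argument where the paper offers only a citation.

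The genuine gap is part (1). After reducing to the free group $H$ you never prove the core assertion, namely that $\sum_{x\in F}|u(x)|^{2}=\infty$ forces $u\perp \mathrm{B}_{\lambda}(H)$; you describe two candidate strategies and explicitly concede that neither is carried out. The first (a Szeg\H{o}/Riesz-product dichotomy for the tree model of $\pi_{u}$) is a restatement of the conclusion rather than a proof: one would have to construct the realisation of $\pi_{u}$ on the Cayley tree, identify its $\lambda_{H}$-absolutely continuous part, and establish the zero--one law for the geodesic products, none of which appears. The second (testing translates of $u$ against convolvers via the Haagerup inequality) breaks down, as you yourself note, for lack of uniform control over the approximating combinations produced by Proposition \ref{Prop:translation}; the heuristic that divergence of $\sum_{x\in F}|u(x)|^{2}$ ``prevents decay'' is never turned into an inequality. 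Since part (1) is precisely the half of the theorem the paper later exploits (singularity of all powers of $u$, feeding into Theorem \ref{nz} to produce non-natural spectra), this is a substantive missing ingredient: it is the main dichotomy of \cite{am} and \cite{ftm}, not a routine verification, and your proposal as written does not establish it.
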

This result and Theorem \ref{glz} implies the following.
\begin{cor}
Let $F$ be a free subset of $G$ and $u\in \mathrm{B}_{0}(G)$ a Haagerup function as before.
\begin{equation*}
\text{If }\sum_{x\in F}|u(x)|^{n}=\infty\text{ for every }n\in\mathbb{N}
\end{equation*}
then $u$ does not have a natural spectrum.
\end{cor}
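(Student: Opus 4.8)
The plan is to reduce the corollary to Theorem \ref{nz}: it suffices to produce an element of $\mathrm{B}_0(G)\setminus\{0\}$, namely $u$ itself, all of whose powers are singular. The singularity of each $u^n$ will come from applying the theorem of \cite{ftm} quoted above to the Haagerup function $u^n$ rather than to $u$.

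First I would check that for every $n\in\mathbb{N}$ the pointwise power $u^n$ is again a Haagerup function associated with the \emph{same} free subset $F$: one has $u^n(1)=1$ and $|u^n(x)|=|u(x)|^n\le 1$; for $x\in F$, $\overline{u^n(x)}=\overline{u(x)}^{\,n}=u(x^{-1})^n=u^n(x^{-1})$; and if $|xy|=|x|+|y|$ then $u^n(xy)=(u(x)u(y))^n=u^n(x)u^n(y)$. Consequently $u^n$ is positive definite, it vanishes off the subgroup generated by $F$, and $u^n\in\mathrm{B}_0(G)$ because $u\in c_0(G)$. Moreover $u\neq 0$ since $u(1)=1$.

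Next I would invoke part (1) of the cited theorem for $u^n$. The hypothesis $\sum_{x\in F}|u(x)|^m=\infty$ for all $m\in\mathbb{N}$, applied with $m=2n$, gives $\sum_{x\in F}|u^n(x)|^2=\sum_{x\in F}|u(x)|^{2n}=\infty$, hence $u^n\bot B_{\lambda}(G)$ for every $n$. Since $\delta_e\in\ell^2(G)\subset\mathrm{A}(G)\subset B_{\lambda}(G)$ (recall $\delta_e(g)=\langle\delta_e,\lambda(g)\delta_e\rangle_{\ell^2(G)}$), mutual singularity with respect to all of $B_{\lambda}(G)$ forces $u^n\bot\delta_e$; that is, $u^n$ is singular in the sense of the definition preceding Theorem \ref{ort}, for every $n\in\mathbb{N}$. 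Theorem \ref{nz} then applies with $f=u$ and yields $u\notin\mathrm{B}_0(G)\cap\mathrm{NS}(G)$; as $u\in\mathrm{B}_0(G)$ by hypothesis, this means $u\notin\mathrm{NS}(G)$, i.e. $u$ does not have a natural spectrum.

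The only point requiring a little care is the implication ``$u^n\bot B_{\lambda}(G)\Rightarrow u^n\bot\delta_e$''. If one reads $u^n\bot B_{\lambda}(G)$ as $u^n\bot g$ for every $g\in B_{\lambda}(G)$, this is immediate from $\delta_e\in B_{\lambda}(G)$. If instead it is phrased through the central projection $z_{\lambda}\in W^{\ast}(G)$ with $B_{\lambda}(G)=z_{\lambda}\mathrm{B}(G)$ (Proposition \ref{Prop:Lspaces}), one notes that the GNS representation of $\delta_e$ is contained in the left regular representation, so $\mathrm{zs}(\delta_e)\le z_{\lambda}$, whence $\mathrm{zs}(u^n)\,z_{\lambda}=0$ gives $\mathrm{zs}(u^n)\,\mathrm{zs}(\delta_e)=0$. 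Everything else is the routine verification of the Haagerup-function axioms for $u^n$ and bookkeeping with exponents.
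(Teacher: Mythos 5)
Your argument is correct and is essentially the derivation the paper intends: the corollary is stated there without proof as a consequence of the quoted theorem of Fig\`a-Talamanca and De Michele together with the Zafran-type results, and your route --- checking that each power $u^{n}$ is again a Haagerup function over the same free set $F$ with $\sum_{x\in F}|u^{n}(x)|^{2}=\infty$, hence $u^{n}\bot B_{\lambda}(G)$ and in particular $u^{n}\bot\delta_{e}$, then applying Theorem \ref{nz} --- is exactly the natural way to make that implication precise. The only cosmetic discrepancy is that you invoke Theorem \ref{nz} directly where the paper points to Theorem \ref{glz}; since Theorem \ref{nz} is itself proved from item (iii) of Theorem \ref{glz}, this is the same argument.
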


\subsection{Free Riesz products}
We will now follow the article \cite{marek} (and its notation) to define a non-commutative analogue of Riesz products on the free groups; this class will turn out to contain many examples of positive definite functions without a natural spectrum. Let us start with a definition.
\begin{de}
Let $(\Gamma_1, \varphi_1)$ and $(\Gamma_2,\varphi_2)$ be discrete groups equipped with normalised positive definite functions. Let $\Gamma = \free{\Gamma_1}{\Gamma_2}$ be the free product of groups. Using $\varphi_1$ and $\varphi_2$ we may define a new positive definite function $\varphi:=\free{\varphi_1}{\varphi_2}: \Gamma \to \mathbb{C}$. Fix $x\in \Gamma$. If $x=e$ then we put $\varphi(x)=1$, otherwise $x$ can be written as a product $x=\gamma_{1}\gamma_{2}\dots\gamma_{n}$, where $\gamma_k \in \Gamma_{i_k}\setminus \{e\}$ and $i_1\neq i_2\neq\dots \neq i_n$ -- in such a case $\varphi(x)=\varphi_{i_1}(\gamma_1)\varphi_{i_2}(\gamma_2)\dots \varphi_{i_n}(\gamma_n)$.
\end{de}
\begin{rem}
It is not trivial to prove that such a function $\varphi$ is positive definite.
\end{rem}
This notion of free product has been generalised and studied further in \cite{marekroland} and \cite{conditionallyfree}, resulting in the notion of $c$-freeness -- it is a generalisation of the notion of freeness introduced by Voiculescu. A version for free products of completely positive maps was provided by Boca in \cite{florin}, independently of previous developments.

This construction can be iterated, even infinitely many times, producing positive definite functions on free products of infinitely many groups. Bo\.{z}ejko utilised such infinite products to obtain a free counterpart of classical Riesz products. Our plan now is to recall this construction.

Let $\mathbb{F}_{\infty}= \bigcirc_{k=1}^{\infty} \mathbb{Z}^{(k)}$ be the free group on infinitely many generators, viewed as an infinite free product of copies of $\mathbb{Z}$; we will call the $k$-th free generator $x_k$. For each $k \in \mathbb{N}$ consider a positive definite function $v_k:= \delta_{e} + \alpha_k \delta_{x_k} + \overline{\alpha}_k \delta_{x_k^{-1}}$, where $0<|\alpha_k| \leqslant \frac{1}{2}$. The free product function $R:=\bigcirc_{k=1}^{\infty} v_k$ will be called a \textbf{free Riesz product}. We want to present now the main result concerning free Riesz products (cf. \cite[Corollary 2]{marek}). Before that, let us recall that $B_{\lambda}(G)$ is the weak$^{\ast}$ closure of $\mathrm{A}(G)$ inside $\mathrm{B}(G)$.
\begin{thm}
Let $v_k = \delta_e + \alpha_k \delta_{x_k} + \overline{\alpha}_k \delta_{x_k^{-1}}$ be a positive definite function on $\mathbb{Z}^{(k)}$ with $|\alpha_k| \leqslant \frac{1}{2}$. Form the free Riesz product $R=\bigcirc_{k=1}^{\infty} v_k$, which is a positive definite function on $\mathbb{F}_{\infty}=\bigcirc_{k=1}^{\infty} \mathbb{Z}^{(k)}$. Let $\beta:= \sum_{k=1}^{\infty} |\alpha_k|^2$ and $\gamma:=\sum_{k\neq l} |\alpha_{k}|^2 \cdot |\alpha_l|^2$. Then:
\begin{enumerate}[{\normalfont (i)}]
\item if $\beta<\infty$ and $\gamma<1$ then $R \in \ell_2(\mathbb{F}_{\infty})$;
\item if $\beta<\infty$ and $\gamma\leqslant 1$ then $R \in B_{\lambda}(\mathbb{F}_{\infty})$;
\item\label{singriesz} if $\beta=\infty$ then $R \perp B_{\lambda}(\mathbb{F}_{\infty})$;
\item if $R \in \mathrm{B}_{\lambda}(\mathbb{F}_{\infty})$ then $\beta \leqslant 2$.
\end{enumerate}
\end{thm}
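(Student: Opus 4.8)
The plan is to prove all four items at once by analysing the $\ell^2$-norm and $C^{\ast}_\lambda$-behaviour of the partial products $R_N := \bigcirc_{k=1}^N v_k$, which are positive definite functions on the finitely generated free group $\mathbb{F}_N = \bigcirc_{k=1}^N \mathbb{Z}^{(k)}$, and then passing to the limit. The combinatorial heart of everything is an exact formula for $\|R_N\|^2_{\ell^2}$: by the very definition of the free-product function, $R_N$ is multiplicative along reduced words alternating between the syllable groups, so $R_N$ is supported on words $x = x_{k_1}^{m_1}\cdots x_{k_j}^{m_j}$ with $k_1\neq k_2\neq\cdots\neq k_j$, and on such a word $R_N(x) = \prod_i \alpha_{k_i}^{(m_i)}$ where $\alpha_k^{(1)}=\alpha_k$, $\alpha_k^{(-1)}=\overline{\alpha_k}$ and $v_k$ vanishes for $|m|\geqslant 2$. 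Hence only words with each $|m_i|=1$ contribute, and summing over all alternating words of every length gives
\begin{equation*}
\|R_N\|^2_{\ell^2} = \sum_{j=0}^{\infty}\ \sum_{\substack{k_1\neq\cdots\neq k_j}} \prod_{i=1}^{j} 2|\alpha_{k_i}|^2,
\end{equation*}
i.e. the generating-function sum over alternating sequences drawn from $\{1,\dots,N\}$ with weights $w_k := 2|\alpha_k|^2$. Since $|\alpha_k|\leqslant\tfrac12$ we have $w_k\leqslant\tfrac12$, so each $w_k/(1-w_k)\leqslant 1$; a standard transfer-matrix / inclusion-exclusion estimate then bounds this sum in terms of $\beta_N := \sum_{k\leqslant N} w_k/2 = \sum_{k\leqslant N}|\alpha_k|^2$ and the ``pair'' sum $\gamma_N := \sum_{k\neq l,\ k,l\leqslant N}|\alpha_k|^2|\alpha_l|^2$, the point being that the leading divergence as $N\to\infty$ is controlled by $\beta$ and the next-order one by $\gamma$.

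For item (i): if $\beta<\infty$ and $\gamma<1$, the alternating-word sum above converges uniformly in $N$ (the $\gamma<1$ condition is exactly what makes the geometric-type series of pair-interactions summable while $\beta<\infty$ handles the single terms), so $\sup_N\|R_N\|_{\ell^2}<\infty$; since $R_N\to R$ pointwise and the $R_N$ are norm-bounded in $B(\mathbb{F}_\infty)$, we get $R_N\to R$ weak${}^{\ast}$, and weak${}^{\ast}$-lower-semicontinuity of the $\ell^2$-norm (viewing $\ell^2\subset B$) yields $R\in\ell^2(\mathbb{F}_\infty)$. For item (ii): if only $\gamma\leqslant 1$ is assumed, the $\ell^2$-bound may fail, but one instead estimates $\|R_N\|_{C^{\ast}_\lambda(\mathbb{F}_\infty)^{\ast}} = \|R_N\|_{B_\lambda}$; using the Haagerup inequality $\|\chi_n\|_{C^{\ast}_\lambda}\leqslant (n+1)\|\chi_n\|_{\ell^2}$ exactly as in the Haagerup-function argument recalled earlier in this section, one shows the $B_\lambda$-norms of the $R_N$ stay bounded when $\gamma\leqslant 1$, and then $R$, being a weak${}^{\ast}$-limit of elements of $A(\mathbb{F}_\infty)\subset B_\lambda(\mathbb{F}_\infty)$ with bounded norms, lies in the weak${}^{\ast}$-closed set $B_\lambda(\mathbb{F}_\infty)$.

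For item (iii): if $\beta=\infty$ we must show $R\perp B_\lambda(\mathbb{F}_\infty)$, i.e. $\mathrm{zs}(R)\,\mathrm{zs}(B_\lambda)=0$, equivalently that the GNS representation $\pi_R$ is ``totally non-reduced''. Here I would use the free-product structure directly: the GNS representation of the free-product state $\bigcirc_k v_k$ is the reduced free product of the GNS representations of the $v_k$ (this is Bo\.zejko's / Boca's description of the $c$-free product), and a reduced free product of representations is disjoint from the regular representation as soon as none of the factor states is a trace whose GNS representation is weakly contained in the regular one — and the divergence $\sum|\alpha_k|^2=\infty$ forces exactly the failure of $\ell^2$-summability that, via the same Haagerup-inequality computation run in the opposite direction (testing against $\chi_n$), shows $R$ cannot define a bounded functional on $C^{\ast}_\lambda(\mathbb{F}_\infty)$, hence its absolutely continuous part with respect to $B_\lambda$ is zero; combined with the Lebesgue decomposition (Proposition \ref{rl}) this gives $R\perp B_\lambda(\mathbb{F}_\infty)$. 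Finally item (iv) is the contrapositive sharpening of (iii): if $R\in B_\lambda(\mathbb{F}_\infty)$ then $R$ defines a bounded functional on $C^{\ast}_\lambda$, so $|R(\chi_n)|\leqslant C\|\chi_n\|_{C^{\ast}_\lambda}\leqslant C(n+1)\|\chi_n\|_{\ell^2}$; evaluating $R$ against $\chi_n$ produces (by the formula above, keeping only single-syllable contributions) a quantity whose growth is governed by the ``transfer matrix'' with diagonal-free weights $w_k$, and extracting the dominant term gives a lower bound forcing $\sum_k w_k/2 = \beta\leqslant 2$ — the constant $2$ coming from the $\ell^2$ vs. $C^{\ast}_\lambda$ discrepancy (it is the value at the threshold $|\alpha_k|=\tfrac12$, where $w_k=\tfrac12$ and the per-generator contribution saturates).

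The main obstacle I expect is the bookkeeping in item (iii): carefully identifying the GNS representation of the infinite free-product function with a reduced free product and then invoking the right disjointness-from-$\lambda$ criterion requires some care about infinite free products of pointed representations and about which states are ``amenable''; the cleanest route is probably to avoid the abstract free-probability machinery and argue purely quantitatively — show $R\notin B_\lambda$ via the Haagerup-inequality test against $\chi_n$ (as in item (iv)), and then upgrade ``not absolutely continuous'' to ``singular'' using that $B_\lambda(\mathbb{F}_\infty)=\mathrm{zs}(\lambda)B(\mathbb{F}_\infty)$ is an $L$-space (Proposition \ref{Prop:Lspaces}), so its orthogonal complement is again an $L$-space and the Lebesgue decomposition of $R$ with respect to $B_\lambda$ has the property that if the absolutely continuous part were nonzero it would, being $\ll R$ and hence of the same free-product type on a cofinal family of generators, again violate the $\chi_n$-estimate.
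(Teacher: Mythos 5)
First, a point of reference: the paper offers no proof of this theorem at all --- it is quoted from Bo\.{z}ejko \cite[Corollary 2]{marek} --- so your proposal has to stand entirely on its own. The most serious gap is in item (iii). The $\chi_n$-test via the Haagerup inequality shows at best that $R$ does not extend to a bounded functional on $C^{\ast}_{\lambda}(\mathbb{F}_{\infty})$, i.e.\ $R \notin B_{\lambda}(\mathbb{F}_{\infty})$; this is strictly weaker than $R \perp B_{\lambda}(\mathbb{F}_{\infty})$, and your proposed upgrade does not close the gap. Writing $R = \mathrm{zs}(\lambda)R + (1-\mathrm{zs}(\lambda))R$, there is no reason for the absolutely continuous part $\mathrm{zs}(\lambda)R$ to be ``of the same free-product type on a cofinal family of generators'': absolutely continuous parts of Riesz products are not Riesz products, and this is exactly why singularity theorems for Riesz products (classical or free) require a different mechanism (second-moment and martingale arguments classically; here, an actual analysis of the GNS representation of the infinite free product and its disjointness from $\lambda$). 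The free-probability route you sketch in the first half of (iii) is the right one, but everything you defer there is the entire content of the statement, not bookkeeping.

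Second, the quantitative steps in (i), (ii) and (iv) are asserted rather than carried out, and for (i) the announced route cannot be completed. Your formula for $\|R\|^2_{\ell^2}$ is correct, and decomposing an alternating word by its first syllable gives the closed form $\|R\|^2_{\ell^2} = \bigl(1-\sum_k \tfrac{w_k}{1+w_k}\bigr)^{-1}$ with $w_k = 2|\alpha_k|^2$, finite precisely when $\sum_k \tfrac{w_k}{1+w_k}<1$. This threshold is \emph{not} controlled by $\beta$ and $\gamma$ alone: with three generators carrying $\alpha_k=\tfrac{1}{2}$ (and the remaining $\alpha_k$ negligible) one has $\beta$ near $\tfrac{3}{4}$ and $\gamma$ near $\tfrac{3}{8}$, yet $\sum_k \tfrac{w_k}{1+w_k}\geqslant 1$ and the $\ell^2$-sum diverges. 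So no ``transfer-matrix/inclusion--exclusion estimate in terms of $\beta$ and $\gamma$'' can exist, and a proof of (i) has to go through the exact criterion (this also suggests checking the hypotheses against Bo\.{z}ejko's original formulation). In (ii) the Haagerup inequality points the wrong way: $\|\chi_n\|_{C^{\ast}_{\lambda}} \leqslant (n+1)\|\chi_n\|_{\ell^2}$ is an upper bound on $C^{\ast}_{\lambda}$-norms and therefore yields \emph{necessary} conditions for membership in $B_{\lambda}$ (which is how it is used for $f_r$ earlier in this section, and how it should be used in (iv)); it cannot bound $\|R_N\|_{B_{\lambda}}$ from above. Finally, in (iv) the constant $2$ is named but never extracted from any inequality.
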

Item \eqref{singriesz} from this theorem will be critical for applications. It allows us to find examples of positive definite functions that are singular, so it should be helpful in invoking Theorem \ref{nz} in order to find examples of functions with a non-natural spectrum. We want to have examples such that all powers are singular. Note that the free product construction interacts nicely with taking powers -- if $R$ is a free Riesz product then $R^{m}$ is also a free Riesz product, one only has to replace the numbers $\alpha_k$ with $\alpha_k^{m}$. Therefore previous theorem immediately implies the following corollary.
\begin{cor}\label{rnie}
Suppose that the sequence $(\alpha_k)_{k\in \mathbb{N}}$ satisfies $0<|\alpha_k| \leqslant \frac{1}{2}$, $\lim_{k\to\infty} \alpha_k=0$, and $\sum_{k=1}^{n} |\alpha_k|^{2m}=\infty$ for any $m\in \mathbb{N}$. Consider $v_k:=\delta_e + \alpha_k \delta_{x_k} + \overline{\alpha}_k \delta_{x_k^{-1}}$ and form the free Riesz product $R:=\bigcirc_{k=1}^{\infty} v_k$. Then all the powers of $R$ are singular.
\end{cor}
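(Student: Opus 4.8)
The plan is to reduce the corollary to the theorem immediately above it, applied separately to each power $R^{m}$ rather than to $R$ itself. The first task is to verify rigorously the observation recorded in the paragraph preceding the statement: that $R^{m}$ is again a free Riesz product, now built from the coefficients $(\alpha_{k}^{m})_{k}$. For a reduced word $x = x_{k_{1}}^{n_{1}}\cdots x_{k_{r}}^{n_{r}}$ in $\mathbb{F}_{\infty}=\bigcirc_{k}\mathbb{Z}^{(k)}$ (with $k_{1}\neq k_{2}\neq\cdots\neq k_{r}$ and each $n_{j}\neq 0$), the definition of $\bigcirc_{k}v_{k}$ gives $R(x)=\prod_{j=1}^{r}v_{k_{j}}(n_{j})$, which vanishes unless every $|n_{j}|\leqslant 1$ (since $v_{k}$ is supported on $\{e,x_{k},x_{k}^{-1}\}$) and on the remaining words equals a product of terms $\alpha_{k_{j}}$ or $\overline{\alpha_{k_{j}}}$. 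Taking the $m$-th power pointwise, $R^{m}(x)=R(x)^{m}$ is the analogous product of terms $\alpha_{k_{j}}^{m}$ or $\overline{\alpha_{k_{j}}^{m}}$, i.e. $R^{m}=\bigcirc_{k=1}^{\infty}v_{k}^{(m)}$ with $v_{k}^{(m)}:=\delta_{e}+\alpha_{k}^{m}\delta_{x_{k}}+\overline{\alpha_{k}^{m}}\,\delta_{x_{k}^{-1}}$. Since $0<|\alpha_{k}^{m}|=|\alpha_{k}|^{m}\leqslant\tfrac12$, each $v_{k}^{(m)}$ is of the admissible form, so $R^{m}$ falls under the hypotheses of the cited theorem. (The assumption $\lim_{k}\alpha_{k}=0$ plays no role in this reduction; it is imposed so that $R\in c_{0}(\mathbb{F}_{\infty})$, which matters for the later application of Theorem \ref{nz}.)

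Next I would read off the relevant invariant. For $R^{m}$ the quantity called $\beta$ in the theorem is $\sum_{k}|\alpha_{k}^{m}|^{2}=\sum_{k}|\alpha_{k}|^{2m}$, and this diverges by hypothesis — which is precisely why the assumption is stated for every $m$. Item \eqref{singriesz} of the theorem then yields $R^{m}\perp B_{\lambda}(\mathbb{F}_{\infty})$ for every $m\in\mathbb{N}$.

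Finally I would convert this into singularity in the sense of our definition, namely $R^{m}\perp\delta_{e}$. Recall that $B_{\lambda}(G)$ is the weak$^{\ast}$-closure of the ideal $\mathrm{A}(G)$, hence is $G$-bi-invariant, so by Proposition \ref{Prop:Lspaces} it is an $L$-ideal and $B_{\lambda}(G)=z_{\lambda}\mathrm{B}(G)$ for a central projection $z_{\lambda}\in W^{\ast}(G)$; the relation $R^{m}\perp B_{\lambda}(\mathbb{F}_{\infty})$ means $\mathrm{zs}(R^{m})\leqslant 1-z_{\lambda}$. Because $\delta_{e}\in\ell^{2}(\mathbb{F}_{\infty})\subset\mathrm{A}(\mathbb{F}_{\infty})\subset B_{\lambda}(\mathbb{F}_{\infty})$, we have $\mathrm{zs}(\delta_{e})\leqslant z_{\lambda}$, and therefore $\mathrm{zs}(R^{m})\,\mathrm{zs}(\delta_{e})=0$, i.e. $R^{m}$ is singular. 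As $m$ was arbitrary, all powers of $R$ are singular.

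There is no genuine difficulty in this argument — it is bookkeeping resting on the previous theorem — but the step deserving the most care is the identification of $R^{m}$ as a bona fide free Riesz product: the whole reduction is legitimate only if the theorem can be quoted verbatim for $R^{m}$, which requires checking both that pointwise multiplication of the global functions corresponds to pointwise multiplication of the blocks $v_{k}$ and that the resulting $v_{k}^{(m)}$ are again of the form $\delta_{e}+\beta\delta_{x_{k}}+\overline{\beta}\,\delta_{x_{k}^{-1}}$ with $0<|\beta|\leqslant\tfrac12$.
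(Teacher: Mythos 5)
Your argument is correct and is exactly the route the paper takes: identify $R^{m}$ as the free Riesz product with coefficients $\alpha_{k}^{m}$, note that the corresponding $\beta$ equals $\sum_{k}|\alpha_{k}|^{2m}=\infty$, and invoke item \eqref{singriesz} of the preceding theorem; your extra bookkeeping (verifying the blockwise form of $R^{m}$ and passing from $R^{m}\perp B_{\lambda}(\mathbb{F}_{\infty})$ to $R^{m}\perp\delta_{e}$ via $\delta_{e}\in\mathrm{A}(\mathbb{F}_{\infty})\subset B_{\lambda}(\mathbb{F}_{\infty})$) only spells out what the paper leaves implicit.
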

It is very simple to find examples of sequences $(\alpha_k)_{k\in\mathbb{N}}$ that satisfy the assumptions of this corollary, e.g. $\alpha_k = \frac{1}{2 \log(10+k)}$, so we are able to show examples of functions with a non-natural spectrum. Let us state it explicitly as a corollary.
\begin{cor}
There exist free Riesz products in $\mathrm{B}_{0}(\mathbb{F}_{\infty})$ with a non-natural spectrum.
\end{cor}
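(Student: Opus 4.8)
The plan is to produce a concrete free Riesz product and then simply combine Corollary \ref{rnie} with Theorem \ref{nz}. Concretely, I would take $\alpha_k := \frac{1}{2\log(10+k)}$ (any sequence meeting the hypotheses of Corollary \ref{rnie} would do equally well), set $v_k := \delta_e + \alpha_k \delta_{x_k} + \overline{\alpha}_k \delta_{x_k^{-1}}$ on $\mathbb{Z}^{(k)}$, and form the free Riesz product $R := \bigcirc_{k=1}^{\infty} v_k$ on $\mathbb{F}_{\infty}$. First I would check that this sequence satisfies the three requirements of Corollary \ref{rnie}: clearly $0 < \alpha_k \leqslant \tfrac12$ and $\alpha_k \to 0$, while for every fixed $m$ the series $\sum_k \alpha_k^{2m} = \tfrac{1}{4^m}\sum_k (\log(10+k))^{-2m}$ diverges, since $(\log n)^{2m} = o(n)$ forces the general term to dominate $1/k$ eventually. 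Hence Corollary \ref{rnie} applies and all powers $R^m$ are singular.

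Next I would verify that $R \in \mathrm{B}_0(\mathbb{F}_{\infty})$. Being positive definite and normalised, $R$ lies in $\mathrm{B}(\mathbb{F}_{\infty})$, so it remains to show $R \in c_0(\mathbb{F}_{\infty})$. For a reduced word $x = x_{i_1}^{n_1}\cdots x_{i_l}^{n_l}$ with $i_1 \neq \cdots \neq i_l$ and $n_j \neq 0$, the definition of $\bigcirc_k v_k$ gives $R(x) = 0$ unless every $n_j \in \{-1,1\}$, in which case $|R(x)| = \prod_{j=1}^{l} |\alpha_{i_j}|$. Fix $\varepsilon > 0$, choose $N$ with $|\alpha_k| < \varepsilon$ for $k \geqslant N$, and put $\rho := \max_{1\leqslant k < N}|\alpha_k| \leqslant \tfrac12$. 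If some index $i_j \geqslant N$ occurs then $|R(x)| \leqslant |\alpha_{i_j}|\cdot(\tfrac12)^{l-1} \leqslant \varepsilon$; otherwise all indices are $< N$ and $|R(x)| \leqslant \rho^{\,l}$, which is $\geqslant \varepsilon$ only for $l$ bounded, and there are only finitely many such words. Therefore $\{x : |R(x)| \geqslant \varepsilon\}$ is finite, so $R \in c_0(\mathbb{F}_{\infty})$.

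Finally, since $R(e) = 1$ we have $R \neq 0$, and all powers of $R$ are singular, so Theorem \ref{nz} yields $R \notin \mathrm{B}_0(\mathbb{F}_{\infty}) \cap \mathrm{NS}(\mathbb{F}_{\infty})$; as $R \in \mathrm{B}_0(\mathbb{F}_{\infty})$, this is precisely the statement that $R$ has a non-natural spectrum, which proves the corollary. There is no genuine difficulty here---the corollary is bookkeeping that assembles Corollary \ref{rnie} and Theorem \ref{nz}---and the only point requiring a short argument rather than a citation is the membership $R \in c_0(\mathbb{F}_{\infty})$, which rests solely on $\alpha_k \to 0$ together with $|\alpha_k| \leqslant \tfrac12$.
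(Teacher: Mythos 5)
Your proposal is correct and follows essentially the same route as the paper, which also takes $\alpha_k=\frac{1}{2\log(10+k)}$ and combines Corollary \ref{rnie} with Theorem \ref{nz}. The only difference is that you spell out the verification that $R\in c_0(\mathbb{F}_{\infty})$ (needed to invoke Theorem \ref{nz}), which the paper leaves implicit; that check is a worthwhile addition and is carried out correctly.
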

It is worth to note that we can prove the non-naturality of the spectrum of some Riesz products without the restriction $(\alpha_{k})\in c_{0}$. The presented argument is in the spirit of a Riesz product proof of the Wiener--Pitt phenomenon by C. C. Graham (check \cite{graham}) but to proceed we need first to establish some properties of the \textbf{coset ring} defined as a ring of sets generated by cosets i.e. the smallest collection of subsets containing all cosets and closed under finite unions, intersections and complementation. Observe that the characteristic function of a member $U$ of the coset ring can be written in the following form
\begin{equation}\label{pos}
\chi_{U}=\sum_{l=1}^{n_{1}}\chi_{A_{l}}-\sum_{k=1}^{n_{2}}\chi_{B_{k}}\text{ where }A_{l}\text{ and }B_{k}\text{ are cosets}.
\end{equation}
The coset ring of a group $G$ is closely related to the structure of idempotents in $B(G)$, more explicitly a function $f:G\rightarrow\{0,1\}$ is an idempotent in $B(G)$ iff the support of $f$ belongs to the coset ring - the case of $G=\mathbb{Z}$ was proved by H. Helson (\cite{hel}), for $G=\mathbb{Z}^{n}$ by W. Rudin (\cite{r1}), the general (Abelian) case was settled by P. Cohen (\cite{c}) and the non-commutative version (\cite{ho}) is due to B. Host (note that there are bounds in terms of the norm of the idempotent for the number of summands in (\ref{pos}) given by B. Green, T. Sanders in \cite{gs} for the Abelian case and by T. Sanders in \cite{sa} for non-Abelian groups).

We need the following proposition.
\begin{prop}\label{wars}
Let $G$ be a discrete group and let $U$ be an infinite member of the coset ring of $G$. Then $U$ almost contains an infinite coset (i.e. excluding finitely many elements).
\end{prop}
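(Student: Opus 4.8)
The starting point is the representation \eqref{pos} of the characteristic function of $U$ as a signed sum $\chi_U = \sum_{l=1}^{n_1} \chi_{A_l} - \sum_{k=1}^{n_2} \chi_{B_k}$, where each $A_l$ and $B_k$ is a coset of some subgroup. I would choose such a representation with $n_1 + n_2$ minimal, and then argue by induction on this number. The key combinatorial input I expect to need is a fact about cosets in the same group being either disjoint or equal, together with the observation that if two cosets $A_l$ and $A_{l'}$ appearing with the same sign are comparable (one contained in the other) or if a positive and a negative coset interact in a controlled way, then one can simplify the representation, contradicting minimality. The plan is therefore to extract structural constraints on the cosets appearing in a minimal representation of an infinite member of the coset ring.

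The main step is this: since $U$ is infinite, at least one of the positive cosets, say $A_1$, must be infinite (a finite union of finite cosets is finite; the subtracted terms only make things smaller). Now consider which of the other cosets $A_2,\dots,A_{n_1},B_1,\dots,B_{n_2}$ meet $A_1$ in an infinite set. A coset $C$ (of a subgroup $K$) intersects the coset $A_1$ (of a subgroup $H$) either in the empty set or in a coset of $H \cap K$; the intersection is infinite precisely when $H \cap K$ is infinite, i.e. when $C$ "almost contains" or is "almost contained in" $A_1$ up to passing to the subgroup $H\cap K$. The idea is that $A_1$ minus the finitely many cosets $B_k$ that meet it in an infinite set, minus the (necessarily finite) intersections with the remaining cosets, still leaves an infinite subset of $A_1$ that is, after this surgery, a coset of a finite-index subgroup of $H$ with finitely many points removed — hence almost contains an infinite coset. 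Making this precise requires the elementary fact that an infinite coset $A_1 = xH$ with finitely many elements removed still contains a full coset of a finite-index (indeed the trivial, or a carefully chosen) subgroup; more robustly, one shows that after removing the finitely many "infinitely-intersecting" subtracted cosets, what remains of $A_1 \cap U$ is a coset of $H\cap K_1 \cap \dots \cap K_r$ (intersection over the relevant positive cosets forcing us into a smaller coset) minus a finite set, and this subgroup is still infinite because $A_1 \cap U$ was assumed infinite.

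The main obstacle I anticipate is bookkeeping: disentangling which cosets interact infinitely with $A_1$ and showing that the "finite error" really is finite, i.e. that only finitely many of the $n_1 + n_2$ cosets can meet $A_1$ in an infinite set while the rest contribute a finite set to $A_1 \cap U$. This is where one uses that $H \cap K$ finite forces $C \cap A_1$ finite, so only the cosets $C$ with $H\cap K$ infinite matter, and there are at most $n_1 + n_2$ of those. Once that is in hand, $A_1 \cap U$ differs by a finite set from an intersection of finitely many cosets, which is itself a coset (of the intersection of the corresponding subgroups); since $A_1 \cap U$ is infinite, this coset is infinite, and removing finitely many points still leaves it almost containing itself — which gives the claim. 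I would also double-check the edge case where the relevant subgroup intersection could a priori be finite, and resolve it by noting that then $A_1 \cap U$ would be finite, contrary to hypothesis.
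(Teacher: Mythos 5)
Your reduction is sound up to a point: writing $U=\bigcup_{l}A_{l}\setminus\bigcup_{k}B_{k}$, selecting a positive coset $A_{1}=xH$ with $A_{1}\cap U$ infinite (note that you need $A_{1}\cap U$ infinite, not merely $A_{1}$ infinite, but pigeonhole gives this), and discarding the $B_{k}$ whose intersection with $A_{1}$ is a coset of a finite subgroup does correctly reduce the problem to the following: a set of the form $V=xH\setminus\bigcup_{k=1}^{r}C_{k}$, where each $C_{k}$ is a coset of an infinite subgroup $H\cap K_{k}$ of $H$, almost contains an infinite coset whenever $V$ is infinite. But this is exactly where the difficulty of the proposition sits, and your treatment of it fails. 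You assert that $V$ is ``a coset of $H\cap K_{1}\cap\dots\cap K_{r}$ minus a finite set'' and that this intersection is ``still infinite because $A_{1}\cap U$ was assumed infinite.'' Neither claim is true. Take $G=H=\mathbb{Z}^{2}$, $A_{1}=\mathbb{Z}^{2}$ and the single subtracted coset $B_{1}=\mathbb{Z}\times\{0\}$: then $V=\{(m,n):n\neq 0\}$ is a union of infinitely many cosets of $H\cap K_{1}=\mathbb{Z}\times\{0\}$, and is not any single coset minus a finite set. Worse, with $B_{1}=\mathbb{Z}\times\{0\}$ and $B_{2}=\{0\}\times\mathbb{Z}$ the set $V$ is infinite while $H\cap K_{1}\cap K_{2}=\{(0,0)\}$ is trivial, so the subgroup you propose yields no infinite coset at all; yet $V$ does almost contain the infinite coset $\mathbb{Z}\times\{1\}$ (all of it except the point $(0,1)$), which also shows why the ``finitely many exceptional points'' in the statement cannot be avoided.

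What is actually needed, and what the paper does, is a case analysis on the subgroup $H_{1}\cap\dots\cap H_{n_{2}}$ underlying the subtracted cosets. If it is infinite, then $H$ partitions into cosets of it, each subtracted coset is a union of some of these (being contained in a single coset $hH_{k}$), and any point of $V$ immediately yields a whole coset of the intersection inside $V$. If it is finite, the paper orders the subgroups so that $H_{1}\cap\dots\cap H_{l}$ is infinite for the largest possible $l$, produces a coset of this partial intersection avoiding the first $l$ subtracted cosets, and then shows that each remaining subtracted coset cuts out of it only a coset of a finite subgroup, i.e.\ a finite set, iterating until all $n_{2}$ cosets are handled. Some induction of this kind, exploiting that $H$ decomposes into cosets of each $H\cap K_{k}$, is unavoidable; the one-step ``intersect all the subgroups'' shortcut in your plan does not work.
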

\begin{proof}
By $(\ref{pos})$ we are able to write $U=\bigcup_{l=1}^{n_{1}}A_{l}\setminus\bigcup_{k=1}^{n_{2}}B_{k}$ for some (non-empty) cosets $A_{l},B_{k}$, $l\in\{1,\ldots,n_{1}\}$, $k\in\{1,\ldots,n_{2}\}$ with $\bigcup_{k=1}^{n_{2}}B_{k}\subset \bigcup_{l=1}^{n_{1}}A_{l}$. The formulation of the proposition enables us to remove finite elements from $(\ref{pos})$ so we restrict the discussion to cosets of infinite subgroups. Also, it is clear that is enough to consider the case $n_{1}=1$ and we put $A:=A_{1}$. Let us start by showing the assertion for $n_{2}=1$. Then $U=A\setminus B$ with the cosets $A$ and $B$ satisfying $B\subset A$. Denoting $A=xH$ and $B=yH_{1}$ for $x,y\in G$ and $H,H_{1}$ - subgroups of $G$, we have (since the intersection of cosets is again a coset) $B=yH_{1}=yH_{1}\cap xH=z H\cap H_{1}$ for some $z\in G$. This implies $x^{-1}zH\cap H_{1}\subset H$ and of course $x^{-1}z\in H$. But $H$ is a disjoint unions of cosets of an infinite subgroup $H\cap H_{1}$ and so there exists $u\in H$ such that $uH\cap H_{1}\subset \left(H\setminus x^{-1}z H\cap H_{1}\right)$. Multiplying back by $x$ we get $xu H\cap H_{1}\subset A\setminus B$.

We move now to the general case. Elementary considerations similar to the ones presented above justify that it is enough to prove the assertion with $U$ of the following form
\begin{equation*}
U=H\setminus\bigcup_{k=1}^{n_{2}}a_{k}H_{k}=\bigcap_{k=1}^{n_{2}}\left(H\setminus a_{k}H_{k}\right),
\end{equation*}
where $H,H_{1},\ldots,H_{k}$ are (not necessarily distinct) subgroups of $G$ and $a_{k}\in H$ for $k\in\{1,\ldots,n_{2}\}$. We consider two cases.

If $H_{1}\cap H_{2}\cap\ldots\cap H_{n_{2}}$ is infinite then since $H$ is a union of cosets of each of the subgroups $H_{k}$ and the intersection of cosets is again a coset we immediately obtain the inclusion $hH_{1}\cap H_{2}\cap\ldots\cap H_{n_{2}}\subset U$ for some $h\in H$ which finishes the proof in this case.

In case of finite $H_{1}\cap H_{2}\cap\ldots\cap H_{n_{2}}$ we place one after another all identical subgroups (if there are any) and pick the biggest $l\in\{1,\ldots,n_{2}-1\}$ such that $H_{1}\cap\ldots\cap H_{l}$ is infinite. In the same way as before we see that there exists $h\in H$ for which
\begin{equation*}
hH_{1}\cap\ldots\cap H_{l}\subset \bigcap_{k=1}^{l}\left(H\setminus a_{k}H_{k}\right).
\end{equation*}
The choice of $l$ implies that $H_{1}\cap\ldots\cap H_{l}\cap H_{l+1}$ is finite. Now,
\begin{gather*}
hH_{1}\cap\ldots\cap H_{l}=hH_{1}\cap\ldots\cap H_{l}\cap\bigcap_{k=1}^{l}\left(H\setminus a_{k}H_{k}\right)\cap H=\\
=\left[hH_{1}\cap\ldots\cap H_{l}\cap\bigcap_{k=1}^{l}\left(H\setminus a_{k}H_{k}\right)\right]\cap \left[\left(H\setminus a_{l+1}H_{l+1}\right)\cup a_{l+1}H_{l+1}\right]=\\
=\left[hH_{1}\cap\ldots\cap H_{l}\cap\bigcap_{k=1}^{l+1}\left(H\setminus a_{k}H_{k}\right)\right]\cup \left[hH_{1}\cap\ldots\cap H_{l}\cap\bigcap_{k=1}^{l}\left(H\setminus a_{k}H_{k}\right)\cap a_{l+1}H_{l+1}\right].
\end{gather*}
Note that $hH_{1}\cap\ldots\cap H_{l}\cap\bigcap_{k=1}^{l}\left(H\setminus a_{k}H_{k}\right)\cap a_{l+1}H_{l+1}\subset \left(hH_{1}\cap\ldots\cap H_{l}\right)\cap a_{l+1}H_{l+1}$ and the latter set is either empty or is a coset of a finite subgroup $H_{1}\cap\ldots\cap H_{l}\cap H_{l+1}$. Consequently, $\left(hH_{1}\cap\ldots\cap H_{l}\right)\setminus X_{1}\subset \bigcap_{k=1}^{l+1}\left(H\setminus a_{k}H_{k}\right)$ for some finite set $X_{1}$. We perform this procedure $k-l$ times to finish the argument.
\end{proof}
\begin{thm}
The free Riesz product $R:=\bigcirc_{k=1}^{\infty}\left(\delta_e + \frac{1}{2}\delta_{x_k} + \frac{1}{2} \delta_{x_k^{-1}}\right)\in B(\mathbb{F}_{\infty})$ does not have a natural spectrum.
\end{thm}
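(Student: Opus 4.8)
The plan is to argue by contradiction: assuming $\sigma(R)=\overline{R(\mathbb{F}_\infty)}$, I will use the holomorphic functional calculus to manufacture an idempotent in $\mathrm{B}(\mathbb{F}_\infty)$ whose support is an infinite subset of $\mathbb{F}_\infty$ consisting only of extremely short words, and this will be forbidden by Proposition~\ref{wars}.

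First I would pin down $\overline{R(\mathbb{F}_\infty)}$. By the defining formula for the free product of positive definite functions, if $x=\gamma_1\cdots\gamma_n$ is the syllable decomposition of a reduced word, with $\gamma_j\in\mathbb{Z}^{(i_j)}\setminus\{e\}$ and $i_1\neq\cdots\neq i_n$, then $R(x)=\prod_{j=1}^n v_{i_j}(\gamma_j)$. Since $v_k(x_k^{\pm 1})=\tfrac12$ while $v_k$ vanishes at $m x_k$ for $|m|\geq 2$, one gets $R(x)=2^{-n}$ whenever every syllable of $x$ has exponent $\pm1$ (for instance $R(x_1 x_2\cdots x_n)=2^{-n}$) and $R(x)=0$ otherwise. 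Hence $\overline{R(\mathbb{F}_\infty)}=\{0\}\cup\{2^{-n}:n\geq 0\}$.

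Next, assume for contradiction that this set equals $\sigma(R)$. Then $\tfrac12$ is an isolated point of $\sigma(R)$, so there are disjoint open sets $U_1\ni\tfrac12$ and $U_2\supseteq\{0\}\cup\{2^{-n}:n\neq 1\}$ in $\mathbb{C}$; let $F$ be the function equal to $1$ on $U_1$ and $0$ on $U_2$, which is holomorphic on $U_1\cup U_2\supseteq\sigma(R)$ and satisfies $F^2=F$ there. By the holomorphic functional calculus $p:=F(R)\in\mathrm{B}(\mathbb{F}_\infty)$ and $p^2=F(R)^2=(F^2)(R)=p$. Since the functional calculus is compatible with the characters $x\in\mathbb{F}_\infty\subset\triangle(\mathrm{B}(\mathbb{F}_\infty))$, we obtain $p(x)=F(R(x))$ for every $x$, which equals $1$ exactly when $R(x)=\tfrac12$. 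Thus $p$ is the characteristic function of $S:=\{x\in\mathbb{F}_\infty:R(x)=\tfrac12\}=\{x_k,\,x_k^{-1}:k\geq 1\}$, and by Host's characterisation of idempotents in $\mathrm{B}(G)$ the set $S$ belongs to the coset ring of $\mathbb{F}_\infty$.

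Finally I would invoke Proposition~\ref{wars}: the set $S$ is infinite, hence it almost contains an infinite coset $gH$, i.e.\ $gH\setminus S$ is finite. But $\mathbb{F}_\infty$ is torsion-free, so the infinite subgroup $H$ contains an element $h_0\neq e$, and then $|h_0^n|\to\infty$; consequently $gH$ contains infinitely many elements of reduced length at least $2$. Since every element of $S$ has length $1$, the set $gH\setminus S$ is in fact infinite, a contradiction. Therefore $\sigma(R)\neq\overline{R(\mathbb{F}_\infty)}$. The genuinely non-trivial inputs are Proposition~\ref{wars} and Host's idempotent theorem, both already available; beyond them the only points needing care are the bookkeeping of the syllable decomposition in computing $R(\mathbb{F}_\infty)$ and the elementary observation that infinite subgroups of free groups contain elements of unbounded word length. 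I expect no real obstacle in the functional-calculus step either, the only thing to watch being that $\tfrac12$ can be separated cleanly from the accumulation point $0$ — which is exactly what being isolated in the assumed spectrum provides.
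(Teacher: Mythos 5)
Your proposal is correct and follows essentially the same route as the paper: compute the range, separate the isolated value $\tfrac12$ by holomorphic functional calculus to produce an idempotent supported on $\{x_k,x_k^{-1}\}_{k\in\mathbb{N}}$, invoke Host's idempotent theorem, and then rule out the coset-ring membership via Proposition~\ref{wars}. The only difference is the final elementary step: the paper derives the contradiction from the inversion-symmetry of $\{x_k,x_k^{-1}\}$, while you use that powers of a nontrivial element of a free group have unbounded word length, so an infinite coset must contain infinitely many words of length at least $2$ --- both verifications are valid.
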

\begin{proof}
Suppose towards a contradiction, that $\sigma(R)=R(\mathbb{F}_{\infty})$. Clearly,
\begin{equation}\label{risz}
R(\mathbb{F}_{\infty})=\{0\}\cup\left\{\frac{1}{2^{n}}\right\}_{n\in\mathbb{N}}\cup\{1\}\text{ and }R(x)=\frac{1}{2}\text{ iff }x=x^{\pm 1}_{k}\text{ for some }k\in\mathbb{N}.
\end{equation}
By $(\ref{risz})$ we are able to find two disjoint open sets $A,B\subset\mathbb{C}$ satisfying $\sigma(R)\subset A\cup B$ and $B\cap\sigma(R)=\{\frac{1}{2}\}$. Let $F:A\cup B\rightarrow\mathbb{C}$ be a function defined by the formula:
\begin{equation*}
F(z)=\left\{\begin{array}{c}
              0\text{ for }z\in A, \\
              1\text{ for }z\in B.
            \end{array}\right.
\end{equation*}
Then $F$ is holomorphic on $A\cup B$ and we apply the functional calculus to obtain an idempotent $F(R)$. The second part of $(\ref{risz})$ implies that the support of $F(R)$ is a set $\{x_{k},x_{k}^{-1}\}_{k\in\mathbb{N}}$. However, by the Host Idempotent Theorem (see \cite{ho}) the support of $F(R)$ has to belong to the coset ring of $\mathbb{F}_{\infty}$. By Proposition \ref{wars} the set $\{x_{k},x_{k}^{-1}\}_{k\in\mathbb{N}}$ almost contains an infinite coset. This is clearly impossible but let us present a formal proof for the reader's convenience. Suppose that $\left(gG\setminus X\right)\subset \{x_{k},x_{k}^{-1}\}_{k\in\mathbb{N}}$ for some infinite subgroup $G$, finite set $X$ and $g\in\mathbb{F}_{\infty}$. Then $G\subset \{g^{-1}x_{k},g^{-1}x_{k}^{-1}\}_{k\in\mathbb{N}}\cup g^{-1}X$. As the set $X$ is finite there is $k_{0}\in\mathbb{N}$ such that $(g^{-1}x_{k_{0}})^{-1}=x_{k_{0}}^{-1}g\in\{g^{-1}x_{k},g^{-1}x_{k}^{-1}\}_{k\in\mathbb{N}}$ so for some $k_{1}\in\mathbb{N}$ we have either $x_{k_{0}}^{-1}g=g^{-1}x_{k_{1}}$ or $x_{k_{0}}^{-1}g=g^{-1}x^{-1}_{k_{1}}$ and both cases lead easily to a contradiction.
\end{proof}
\section{Concluding remarks and open problems}
\begin{enumerate}
  \item The main open problem related to Section \ref{basic results} is the occurrence of the Wiener--Pitt phenomenon for discrete groups not containing any infinite Abelian subgroups. The groups with even stronger properties called \textbf{Tarski monsters} (infinite groups such that each of its non-trivial subgroup has $p$ elements for a fixed prime number $p$) were constructed by A. Yu. Olshanskii (see the book \cite{ol}) but the authors were unable to verify if such groups admit elements with a non-natural spectrum.
  \item The argument used in order to prove the extension of the theorem of Hatori and Sato (Theorem \ref{rozklad}) utilizes the special assumption on the group $G$ (it is supposed to be maximally almost periodic) but it is not known if any additional condition is needed to obtain a similar statement. Concretely, it is possible that $\mathrm{B}(G)=\mathrm{NS}(G)+\mathrm{NS}(G)$ for any discrete group $G$ (this problem is open even for $G=\mathbb{Z}$).
  \item Passing to Section \ref{Section Zafran}, the authors do not know if GNS faithfullness is necessary in Proposition \ref{singzaf}.
  \item It is clear that $r(\mathrm{A}(G))=\{f\in \mathrm{B}(G):f^{n}\in \mathrm{A}(G)\text{ for some }n\in\mathbb{N}\}\subset \mathrm{B}_{0}(G)\cap \mathrm{NS}(G)$ and there is a very nice argument showing that $r(\mathrm{A}(G))\varsubsetneq \mathrm{B}_{0}(G)\cap \mathrm{NS}(G)$ at least for commutative (discrete) $G$. Indeed, if $r(\mathrm{A}(G))=\mathrm{B}_{0}(G)\cap \mathrm{NS}(G)$ then $r(\mathrm{A}(G))$ is closed and thus it is a commutative Banach algebra. Since $\mathrm{A}(G)$ is an ideal in $r(\mathrm{A}(G))$ we can construct the quotient $A:=r(\mathrm{A}(G))/\mathrm{A}(G)$ which satisfies $\triangle(A)=\emptyset$ ($A$ is radical) as $\triangle(r(\mathrm{A}(G)))=\triangle(\mathrm{A}(G))=G$ and by definition of $r(\mathrm{A}(G))$ every element in $A$ is nilpotent ($A$ is nil). But a Baire category argument (see \cite{grab}) implies that $A$ is nilpotent, i. e. there exists $N\in\mathbb{N}$ such that $f^{N}=0$ for every $f\in A$. Now a suitable chosen Riesz product leads to a contradiction.
      However, the question if the equality $\overline{r(\mathrm{A}(G))}=\mathrm{B}_{0}(G)\cap \mathrm{NS}(G)$ holds true remains unanswered even for $G=\mathbb{Z}$.
  \item An element $f\in \mathrm{B}(G)$ has \textbf{independent powers} if $f^{n}\bot f^{m}$ for every $n\neq m$. The theorem of G. Brown and W. Moran (check \cite{bm}) asserts that the classical Riesz products satisfying the assumption of Corollary \ref{rnie} (even without the requirement of belonging to $c_{0}$) have independent powers. It is plausible that the same holds for free Riesz products but there is an obstacle in the direct transfer of the original argument - we do not know if free Riesz products are GNS faithful.
	\item Free Riesz products can be also used to produce group actions that are interesting from the point of view of group theory. Let $f \in c_{0}(G)$ be a real, positive definite function on a discrete group $G$ such that all of its powers are singular to $\mathrm{B}_{\lambda}(G)$; some free Riesz products on $\mathbb{F}_{\infty}$, as remarked in Corollary \ref{rnie}, have this property. Let $\pi: G \to \mathcal{O}(\mathsf{H})$ be the GNS representation associated with $f$, where $\mathsf{H}$ is a real Hilbert space; it is mixing since $f\in c_0(G)$. Then the GNS representation associated with $f^{n}$ may be identified with the $n$-fold \emph{symmetric} tensor power of $\pi$ and we can form the direct sum of these representations, which acts on the direct sum of all symmetric tensor powers of $\mathsf{H}$, i.e. the \textbf{symmetric Fock space} over $\mathsf{H}$. This setting always gives rise to a measure preserving action of $G$ -- the \textbf{Gaussian action} associated with $\pi$ -- such that the representation on the Fock space is naturally identified with the Koopman representation. Gaussian action associated with the left regular representation is the Bernoulli shift. Generally mixing Gaussian actions are viewed as a generalisation of Bernoulli actions. By our singularity assumption, none of subrepresentations of the representation on the Fock space is weakly contained in the left regular representation. Therefore, in our case the properties of the associated Gaussian action may be stated informally as follows: no part of this action looks like a Bernoulli action. It is an interesting example, although we believe that many occurrences of this phenomenon are familiar to group theorists.
  \item A Hermitian probability measure with independent powers on a locally compact Abelian group $G$ has a spectrum filling the whole unit disc (this is the theorem of W.~J. Bailey, G. Brown and W. Moran \cite{bbm}, see also Theorem 6.1.1 in \cite{grmc}). Once again it seems that this theorem should be correct also for non-commutative groups but this time the main problem lies in the lack of a notion of a topological support of an element from $\mathrm{B}(G)$.
  \item In \cite{ow1} it was shown that there exists a compact (uncountable) set $A\subset\mathbb{C}$ (called \textbf{Wiener--Pitt set}) such that every measure $\mu\in M(\mathbb{T})$ satisfying $\widehat{\mu}(\mathbb{Z})\subset A$ has a natural spectrum. The proof of the analogue for some non-commutative groups would be a tremendous progress.
	\item Even though we worked specifically with discrete groups, some of the results carry over to general locally compact groups. Let us say first, which results cannot be extended to the framework of locally compact groups.
	
To retain the duality between the Fourier--Stieltjes algebra and the group $C^{\ast}$-algebra, we need to consider only continuous positive definite functions, sometimes called functions of positive type. The topological issues make the situation vastly more complicated. Suppose that $H$ is a closed subgroup of a locally compact group $G$. Unless the group $H$ is also an open subgroup, our procedure of extending positive definite functions on $H$ to $G$ by putting value zero outside $H$ is not valid. We heavily relied on algebraic properties of this particular extension, so mere existence of an extension would not be enough for our purposes. It turns out that even this fails. An concrete example is as follows: let $\mathbb{R} \rtimes \mathbb{R}_{+}$ be the semidirect product of $(\mathbb{R},+)$ and $(\mathbb{R}_{+}, \cdot)$, which is sometimes called the $ax+b$ group. On the subgroup $\mathbb{R}$ we can consider a character $x \mapsto \exp(itx)$ for some $t\in\mathbb{R}$; this positive definite functions extends to a continuous definite function on $\mathbb{R} \rtimes \mathbb{R}_{+}$ only for $t=0$, see \cite[discussion on pages 204--205]{ey}. The results from this paper that relied on infinite Abelian subgroups used this feature of discrete groups in an essential way. Among them are Theorem \ref{wpf} and Theorem \ref{nieos}. On the other hand, Theorem \ref{rozklad} holds also for locally compact groups.

The proof of Theorem \ref{glz} relied on Lemma \ref{przelicz}, whose proof works only for discrete groups. This theorem cannot be extended to more general groups, even for Abelian groups (cf. \cite{hs}). On the other hand, all the material that follows this result, from Subsection \ref{mssandas} on, works equally well for general locally compact groups, as it only appealed to general theory of $C^{\ast}$-algebras and von Neumann algebras. In particular, Proposition \ref{Prop:translation}, Proposition \ref{Prop:Lspaces} and Theorem \ref{brzsz} are still true in this generalised setting. Note that any results pertaining to the Zafran ideal $\mathrm{B}_{0}(G) \cap \mathrm{NS}(G)$, such as Theorem \ref{nz}, are not valid, because we expect that $\mathrm{B}_{0}(G) \cap \mathrm{NS}(G)$ does not have any reasonable algebraic structure if the group $G$ is not discrete.
\item There is still not much known about properties of the Haagerup functions $f_r$ discussed at the beginning of Section \ref{free groups}. There are two specific problems that we would like to advertise: Is $f_{\scaleobj{0.7}{\frac{1}{\sqrt{2k-1}}}}$ singular to $\mathrm{A}(\mathbb{F}_k)$? Is $f_r$ singular to $B_{\lambda}(\mathbb{F}_k)$ for all $r> \frac{1}{\sqrt{2k-1}}$? If it is so, it would provide a nice example of a `phase transition' at point $r=\frac{1}{\sqrt{2k-1}}$ -- below it the functions belong to the Fourier algebra and above that level they are singular even to the reduced Fourier--Stieltjes algebra.
\end{enumerate}

\end{document}